\newtheorem{theorem}{Theorem}[section]
\newtheorem{lemma}[theorem]{Lemma}
\newtheorem{corollary}[theorem]{Corollary}
\newtheorem{proposition}[theorem]{Proposition}
\newtheorem{assumption}[theorem]{Assumption}
\theoremstyle{definition}
\newtheorem{definition}[theorem]{Definition}
\newtheorem{example}[theorem]{Example}
\theoremstyle{remark}
\newtheorem{remark}[theorem]{Remark}
\numberwithin{equation}{section}
\numberwithin{theorem}{section}
\newcommand{\E}{\mathbb{E}}
\newcommand{\Mcal}{{\mathcal M}}
\newcommand{\Dcal}{{\mathcal D}}
\newcommand{\Gcal}{{\mathcal G}}
\renewcommand{\P}{{\mathbb P}}
\newcommand{\p}{k}
\newcommand{\R}{\mathbb{R}}
\newcommand\mydots{\hbox to 1em{.\hss.\hss.}}
\begin{document}

\title{Polynomial Volterra processes}

\author{Eduardo Abi Jaber\thanks{Ecole Polytechnique, CMAP, Palaiseau, France, eduardo.abi-jaber@polytechnique.edu} \quad   
Christa Cuchiero\thanks{Vienna University, Department of Statistics and Operations Research, Data Science @ Uni Vienna,  Wien, Austria, christa.cuchiero@univie.ac.at
} 
\quad
Luca Pelizzari\thanks{Technische Universität Berlin and Weierstrass Institut (WIAS), Berlin, Germany, pelizzari@wias-berlin.de}
\quad Sergio Pulido\thanks{Universit\'e Paris--Saclay, CNRS, ENSIIE, Univ \'Evry, Laboratoire de Math\'ematiques et Mod\'elisation d'\'Evry (LaMME), \'Evry, France, sergio.pulidonino@ensiie.fr} \\  Sara Svaluto-Ferro\thanks{University of Verona, Department of Economics,
		 Verona, Italy, sara.svalutoferro@univr.it   \newline
  This work was partially supported by the ``PHC AMADEUS'' programme (project number: 47561RJ), funded by the French Ministry for Europe and Foreign Affairs, the French Ministry for Higher Education and Research, and the Austrian Ministry for Higher Education.\newline
The first author is grateful for the financial support from the Chaires FiME-FDD and Financial Risks at \'Ecole Polytechnique. 
The research of the first and fourth authors benefited from the financial support of the Chaires Deep finance \& Statistics and Machine Learning \& systematic methods in finance at \'Ecole Polytechnique.
The second author gratefully acknowledges financial support through grant Y 1235 of the START-program and through the OEAD WTZ project FR 02/2022. The third author gratefully acknowledges funding by the Deutsche Forschungsgemeinschaft (DFG, German Research Foundation) under Germany’s Excellence Strategy – The Berlin Mathematics Research Center MATH+ (EXC-2046/1, project ID: 390685689). 
}}
\date{}

\maketitle

\begin{abstract}
We study the class of continuous polynomial Volterra processes, which we define as solutions to stochastic Volterra equations driven by a continuous semimartingale  with affine drift and quadratic diffusion matrix in the state of the Volterra process.  
To demonstrate the versatility of possible state spaces within our framework, we construct polynomial Volterra processes on the unit ball. 
This construction is based on a  stochastic invariance principle for stochastic Volterra equations with possibly singular kernels. 
Similarly to classical polynomial processes, polynomial Volterra processes allow for tractable expressions of the moments in terms of the unique solution to a system of deterministic integral equations, which reduce to a system of ODEs in the classical case.
By applying this observation to the moments of the finite-dimensional distributions we derive a uniqueness result for polynomial Volterra processes.
Moreover, we prove that the moments are polynomials with respect to the initial condition, another crucial property shared by classical polynomial processes.
The corresponding coefficients can be interpreted as a deterministic dual process and solve integral equations dual to those verified by the moments themselves.
Additionally, we obtain a representation of the moments in terms of a pure jump process with killing, which corresponds to another non-deterministic dual process.
\end{abstract}

\textbf{Keywords:} stochastic Volterra equations, polynomial processes, stochastic invariance, moments, dual processes \\
\textbf{AMS MSC 2020:} 60H15, 45D05, 60K50
\tableofcontents

 \section{Introduction}
Polynomial processes in finite dimensions, introduced in \cite{cuchiero2012polynomial} and \cite{filipovic2016polynomial}, constitute a class of time-homogeneous It\^o-semimartingales which are inherently tractable: conditional moments can be expressed through a \emph{deterministic dual process} which is the solution of a linear ODE. This is the so-called \emph{moment formula}. They form a rich class that includes Wright-Fisher diffusions (\cite{kimura1964diffusion}) from population genetics, Wishart correlation matrices (\cite{ahdida2013mean}), and affine processes (\cite{DFS:03}), just to name a few. Notably, polynomial diffusions offer greater flexibility than affine diffusions, accommodating more general semialgebraic state spaces, including in particular bounded state spaces; see \cite{filipovic2016polynomial} and \cite{LARSSON2017901} for a systematic analysis. The computational advantages due to the moment formula in the polynomial setting (see, e.g., \cite{ackerer2018jacobi} and \cite{filipovic2020markov}) have led to a wide range of applications, in particular in population genetics and mathematical finance. Indeed, in population genetics dual processes associated to moments and their interpretation in view of coalescent theory 
play an important role: the Wright-Fisher diffusion with seed-bank component (see, e.g., \cite{BCKW:16} and the references therein) is for instance an important example of a recently investigated two-dimensional polynomial process in this field.  
In mathematical finance, polynomial processes comprise a plethora of highly popular models, ranging from the famous Black Scholes model over certain jump-diffusions to Jacobi-type processes, which have been used for stochastic volatility models, life insurance liability modeling, variance swaps, and stochastic portfolio theory (see, e.g., \cite{ackerer2018jacobi,biagini2016polynomial,cuchiero2019polynomial,filipovic2016quadratic}). 

All these models share a finite dimensional Markov property which
sometimes may not be adequate, for instance, for modeling volatility   where path-dependence is crucial (see, e.g., \cite{cont2001empirical} and \cite{GL:23}).
This has motivated the emergence of numerous models in the literature based on stochastic Volterra equations, 
where the specification of the kernel offers greater flexibility to align with market data (\cite{abijaber2022joint, abi2024volatility,delemotte2023yet,  GJR:22,guyon2022does, GL:23, parent2022rough}). In particular, the singular fractional kernel is important in view of rough volatility models \cite{GJR:22}.
 To obtain models such as  the \emph{rough Heston model} \cite{ER:19},  the important class of affine processes has  been extended to the Volterra framework. In particular,  existence and uniqueness of solutions to the associated equations, invariance over certain domains, and formulas for the Fourier-Laplace transform have been established within the affine paradigm, see e.g., \cite{abi2019affine, cuchiero2020generalized, bondi2024affine}.
Note that Volterra-type processes are not only used in the realm of volatility modeling but also to model phenomena exhibiting short and long range dependence and
self-similarity. For instance, they have been applied in web-traffic \cite{N:94,Willi} and energy markets (see, e.g. \cite{BBV:13, BBV:14}). In the latter, so-called
Brownian semistationary processes, introduced in \cite{barndorff2009brownian}, as well as 
volatility modulated Volterra processes, first considered in \cite{barndorff2008time}, play an important role. All these processes can be embedded into the large class of ambit processes, pioneered in \cite{barndorff2007ambit} to
model turbulence and tumour growth. We also refer to the monograph \cite{barndorff2011ambit} for a far reaching analysis of these processes.

In analogy to continuous \emph{affine} Volterra processes as studied in \cite{abi2019affine}, we shall define continuous polynomial Volterra processes  as solutions $X$ of stochastic Volterra equations driven by a continuous semimartingale $Z$ depending on $X$ in a way that resembles the structure appearing in the classical framework. More precisely,  $Z$ has an affine drift and a quadratic diffusion matrix in $X$. To derive moment formulas in this setting, we draw inspiration from recent works on infinite-dimensional polynomial processes. While there is already a vast literature on finite dimensional polynomial processes, 
a systematic analysis of the infinite dimensional case was only recently provided in \cite{CLS:18, cuchiero2021infinite, cuchiero2021measure, benth2021independent}. The articles \cite{CLS:18, cuchiero2021measure} 
treat probability and non-negative measure-valued processes, which include the famous Fleming-Viot and the Dawson-Watanabe superprocess (see \cite{etheridge2000introduction} for an introduction to superprocesses and \cite{li2023measure} for measure-valued branching Markov processes). To accommodate these and also function space valued processes,  a common unifying framework that establishes in particular  the moment formula in a generic infinite dimensional setting has been built in \cite{cuchiero2021infinite}. Related concepts have also been in considered \cite{benth2021independent} and in \cite{benth2020abstract}.

Our work is the first systematic study extending the theory of polynomial processes to the Volterra setting and contributes to the existing literature on Volterra processes in multiple ways. In what follows, we describe the organization of the paper and our main contributions.

In Section \ref{sec:def_existence}, we set the stage by defining continuous polynomial Volterra processes and by recalling  moment estimates and existence of solutions to the associated stochastic Volterra equations from  \cite{abi2019affine}. To illustrate the versatility of possible state spaces, even in the Volterra case, we construct in Theorem~\ref{T:invarianceballpoly} the first non-trivial example of a polynomial Volterra process with possibly singular kernels that remains confined to the unit ball.   Notably, when restricted to the one dimensional case, our results provide the construction of Jacobi Volterra processes, see Corollary~\ref{cor:jacobi}. This construction relies on a more general result in Theorem~\ref{T:invarianceball} showing existence of general Volterra processes confined to the unit ball under structural assumption on the drift and diffusion coefficient of the driving semimartingale $Z$. 
Note that solutions to Volterra equations that remain within a convex set have so far been constructed  and studied for domains with no curvature, such as the non-negative orthant $\mathbb{R}^d_+$, see  \cite[Theorem 3.6]{abi2019affine} and the extensions in \cite{abi2021weak,alfonsi2023nonnegativity}. The primary challenge in constructing a solution $X$  that remains within a given convex set with curvature arises from the potential singularity of the kernel. This singularity could push the process $X$ outside the domain, if the volatility does not vanish on the boundary. This difficulty has already been observed in the construction of Volterra Wishart processes \cite{abi2022laplace,cuchiero2020generalized}.

Section \ref{sec:Moments} is dedicated to an analysis of the moments of polynomial Volterra processes. In particular, in Section \ref{sec:mainformula} we establish an extension of the moment formula for polynomial Volterra processes. This is the main moment formula in our work. It shows that the moments are the unique solutions to a system of deterministic integral equations,  which reduce to a system of ODEs for classical polynomial diffusions; see Theorem \ref{th:momentformula}.
In the terminology of \cite{cuchiero2021infinite} this formula corresponds to the bidual moment formula. In contrast to the generic infinite dimensional framework of \cite{cuchiero2021infinite} we can here actually prove existence and uniqueness of the system of deterministic integral equations.
Using a variation of constants technique, our arguments can be applied to deduce more explicit expressions for the first and second-order moments and for all moments in the affine case, see Section \ref{sec:varconstmoments}. Moreover, we elucidate in Section \ref{sec:momentsarepoly} a crucial structural property, shared also by classical polynomial diffusions, 
namely that the moments are polynomials with respect to the initial condition. The corresponding coefficients
can be interpreted as deterministic dual process and 
solve integral equations dual to those verified by the moments themselves; see Theorem \ref{thm:momentsarepolynomials}.
In the terminology of \cite{cuchiero2021infinite} this  corresponds to the dual moment formula, where 
we can again prove existence and uniqueness of the corresponding equations (which had to be assumed in the general framework of \cite{cuchiero2021infinite}).
Additionally, in Section \ref{sec:uniqueness}, we illustrate how our results and arguments can be applied to the moments of the finite-dimensional distributions. Our considerations then also lead to a novel result regarding the uniqueness in law for solutions to stochastic Volterra equations in the polynomial framework, as proved in Theorem \ref{thm:uniqueness}.

In Section \ref{sec:jumprep}, we get inspiration from the work on classical Flemming Viot processes and general infinite dimensional polynomial processes as considered in \cite{cuchiero2021infinite} to show that the moments of a polynomial Volterra process can be expressed in terms of expectations of a functional of a (finite dimensional) pure jump process with killing. Indeed, we consider a function valued lift to the so-called Filipovi\'c space \cite{filipovic2001consistency}, denoted by $B$, and 
then apply the `dual process' approach. Denote the infinitesimal generator of the function valued lift by $\mathcal{A}$  and consider  polynomials $f: B \times \mathbb{R}^k \to \mathbb{R},\,  (\lambda, x) \mapsto \lambda(x_1)\cdots \lambda(x_k)$. Then we show that in the case of bounded kernels there exists a $k$-dimensional pure jump process with killing, denoted by $U$, with generator $\mathcal{L}$  such that
\begin{align}\label{eq:dualgen}
\mathcal{A}f(\cdot, x)(\lambda)=\mathcal{L}f(\lambda, \cdot)(x).
\end{align}
Modulo several technical conditions, e.g., stated in~\cite[Lemma A.1]{cuchiero2023signature}, it then holds that
\begin{align}\label{eq:duality}
\mathbb{E}_{\lambda_0}[f(\lambda_t,x)] =\mathbb{E}_{Y_0=x}[f(\lambda_0, U_t)].
\end{align}
As the evaluation of $\lambda$ at $0$ corresponds to the Volterra process, we get a representation of the $k$th moment by setting $x=Y_0=0 \in \mathbb{R}^k$.  For the homogeneous case with linear drift and volatility this formula is rigorously proved in Proposition~\ref{prop:jump} and Proposition~\ref{prop:jumpmulti} for the multivariate case. Since for  $\lambda_0 \equiv X_0$ we have    $f(\lambda_0,x)\equiv X_0^k$, we also see that the $k$th moment of the Volterra process is a monomial of degree $k$ in the initial value, which is thus a special case of Theorem \ref{thm:momentsarepolynomials}. For the general non-homogeneous case we retrieve also a similar formula, see Remark~\ref{rem:nonhomogen}.
In this context, let us also mention that the results of Section \ref{sec:momentsarepoly} can be seen from a similar duality point of view, here with $\mathcal{L}$ corresponding to the dual operator in the terminology of \cite{cuchiero2021infinite},  giving rise to a system of deterministic PDEs.
From a numerical perspective 
 the jump representation can sometimes have advantages as it is easy to simulate from a pure jump process and then compute the right hand side of \eqref{eq:duality} via Monte Carlo.
 
Section \ref{sec:unitball} provides the proof of Theorem~\ref{T:invarianceball}.
Appendix \ref{app:inteq} contains the main results necessary to guarantee existence and uniqueness of solutions to the equations presented in Section \ref{sec:Moments}, namely the integral equations verified by the moments of polynomial Volterra processes and by the coefficients in the expression as polynomial with respect to the initial condition.

\textbf{Notation:} We denote polynomials on $\mathbb{R}^d$ of degree less than or equal to $n$ by $\mathrm{Pol}_n(\mathbb{R}^d)$. $\mathbb{N}$ is the set of natural numbers and $\mathbb{N}_0=\mathbb{N}\cup \{0\}$. For a multi-index $\alpha=(\alpha_1,\ldots,\alpha_d)$, with $\alpha_i\in \mathbb{N}_0$, we write $|\alpha|=\sum_{i=1}^d \alpha_i$ for the sum of its components. For a vector $x\in\mathbb{R}^m$, $x_i$ is its $i$-th coordinate. For a matrix $A\in\mathbb{R}^{d\times d}$, we denote by $|A|=\sqrt{\mathrm{Tr}(A^\top A)}$ its Frobenius norm. ${\mathrm{I}}_d$ is the $d\times d$ identity matrix. For any $t\geq 0$, we use the symbol $\mathbb{E}_t$ to denote the conditional expectation given the $\sigma$-algebra $\mathcal{F}_t$. We sometimes use the convolution notation $(f\ast g)(t)=\int_0^t f(t-s)g(s)ds$ for functions $f$ and $g$, and $(f\ast L)(t)=\int_0^t f(t-s)L(ds)$ for a measure $L$. 

\section{Definition and existence of polynomial Volterra processes}
\label{sec:def_existence}

Fix a dimension $d\in \mathbb{N}$ and consider a filtered probability space $(\Omega,\mathcal{F}, (\mathcal{F}_t)_{t\geq 0},\mathbb{P})$, where $(\mathcal{F}_t)_{t\geq 0}$ satisfies the usual conditions and $\mathcal F_0$ is the trivial $\sigma$-algebra on $\Omega$. A continuous polynomial Volterra process of convolution type is a $d$-dimensional adapted process $X$ with continuous trajectories solving a stochastic Volterra equation of the form
\begin{equation}\label{eq_CPVP}
    X_t = g_0(t) + \int_0^t K(t-s)b(X_s)ds + \int_0^t K(t-s)\sigma(X_s) dW_s,\quad t\ge 0,
\end{equation} 
where
\begin{itemize}
    \item $W$ is a $d$-dimensional Brownian motion,
    \item the {\em initial condition} $g_0\colon\mathbb{R}_+\to\mathbb{R}^d$ is in $C(\mathbb{R}_+,\mathbb{R}^d)$,
    \item the {\em convolution kernel} $K\colon\mathbb{R}_+\to\mathbb{R}^{d\times d}$ is in $L^2_{\mathrm{loc}}(\mathbb{R}_+,\mathbb{R}^{d\times d})$, 
    \item the map $b\colon\mathbb{R}^d\to\mathbb{R}^d$ has components in $\mathrm{Pol}_1(\mathbb{R}^d)$, and $\sigma\colon\mathbb{R}^d\to\mathbb{R}^{d\times d}$ is a continuous map such that $a(x)=\sigma(x)\sigma(x)^\top$ has entries in $\mathrm{Pol}_2(\mathbb{R}^d)$. More precisely,
    \begin{equation}\label{eq:fcts_b_a}
        b(x) = b_0+ \sum_{i=1}^d b_i x_i,\quad a(x) = A_0+\sum_{i=1}^d A_ix_i+\sum_{i,j=1}^d A_{ij}x_ix_j
    \end{equation}
    for some $b_i\in\mathbb{R}^d$ and $A_i,A_{ij}\in\mathbb{R}^{d\times d}$.
\end{itemize}
Observe that if all $A_{ij}=0$, $X$ is an affine Volterra process as in \cite{abi2019affine}.
As for stochastic differential equations, we speak of weak solutions to \eqref{eq_CPVP} whenever the filtered probability space and the underlying Brownian motion are not fixed a priori and they are part of the solution. In this case, with a slight abuse of terminology, we say that $X$ is a weak solution to \eqref{eq_CPVP}.

If we define the $d$-dimensional semimartingale $Z$ as
\begin{equation}\label{eq:PZ}
    Z_t = \int_0^t b(X_s)ds + \int_0^t \sigma(X_s) dW_s,
\end{equation}
then the stochastic Volterra equation \eqref{eq_CPVP} can be recast as
\[
    X_t = g_0(t) + \int_0^t K(t-s)dZ_s.
\]

The following proposition provides a priori estimates on the moments of any solution to \eqref{eq_CPVP}. 

\begin{proposition}\label{prop:estimatesmoments}
    Let $X$ be a continuous solution to \eqref{eq_CPVP}. Then, for any $p\in\mathbb{N}$ and $T\geq 0$,
    \begin{equation}
        \sup_{0\leq t\leq T}\mathbb{E}[|X_t|^p]\leq c
    \end{equation}
for some constant $c$ which depends only on $\sup_{0\leq t\leq T}|g_0(t)|$, $p$, $K|_{[0,T]}$, $b_i$, $A_i,A_{ij}$, and $T$.
\end{proposition}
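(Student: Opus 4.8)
The plan is to derive a single closed convolution inequality for the function $m_t:=\E[\abs{X_t}^p]$ and then close it with a Grönwall-type lemma for Volterra convolution inequalities. The key structural input is that the polynomial assumptions force \emph{linear growth} of the coefficients: since $b$ has components in $\Pol_1(\R^d)$ we have $\abs{b(x)}\le c(1+\abs{x})$, and since the entries of $a=\sigma\sigma^\top$ lie in $\Pol_2(\R^d)$ the function $\abs{\sigma(x)}^2=\mathrm{Tr}(a(x))$ is a polynomial of degree at most $2$, so that $\abs{\sigma(x)}\le c(1+\abs{x})$. It suffices to treat $p\ge 2$; the case $p=1$ then follows from Jensen's inequality applied to the bound for $p=2$.

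First I would fix $t\in[0,T]$, split $X_t$ into its three summands, and use $\abs{u+v+w}^p\le 3^{p-1}(\abs{u}^p+\abs{v}^p+\abs{w}^p)$. The first term contributes $(\sup_{[0,T]}\abs{g_0})^p$. For the drift term I would combine the linear growth of $b$ with the Cauchy--Schwarz inequality and Jensen's inequality against the finite measure $\abs{K(t-s)}\,ds$ on $[0,t]$---finite because $K\in L^2_{\mathrm{loc}}\subset L^1_{\mathrm{loc}}$---to obtain a bound of the form $c\int_0^t\abs{K(t-s)}(1+m_s)\,ds$. For the diffusion term I would note that, \emph{for fixed} $t$, the process $s\mapsto\int_0^s K(t-u)\sigma(X_u)\,dW_u$ is a martingale, so the Burkholder--Davis--Gundy inequality applies to its value at $s=t$; after bounding the Frobenius norms and invoking the linear growth of $\sigma$, a second application of Jensen's inequality (now against $\abs{K(t-s)}^2\,ds$, again finite on $[0,T]$) yields a bound $c\int_0^t\abs{K(t-s)}^2(1+m_s)\,ds$. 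Collecting the three contributions gives
\[
m_t \le c_0 + c_1\int_0^t \phi(t-s)\,(1+m_s)\,ds, \qquad \phi:=\abs{K}+\abs{K}^2\in L^1_{\mathrm{loc}}(\R_+),
\]
where $c_0$ depends on $\sup_{[0,T]}\abs{g_0}$, $p$, $T$ and $K|_{[0,T]}$, and $c_1$ on $p$, $T$, the BDG constant and the growth constants coming from $b_i,A_i,A_{ij}$.

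Closing this inequality requires, however, that $m$ be a priori locally bounded (or at least finite), which is not given. I would therefore localise with $\tau_n:=\inf\{t:\abs{X_t}\ge n\}$ and run the same estimate for $m^n_t:=\E[\abs{X_{t\wedge\tau_n}}^p]\le n^p<\infty$, replacing $X_s$ by the bounded process $X_{s\wedge\tau_n}$ inside the integrals up to time $t\wedge\tau_n$. Once the inequality holds for $m^n$ with constants independent of $n$, I would invoke the convolution Grönwall lemma: the resolvent $R$ of $c_1\phi$ is nonnegative and locally integrable, so $m^n_t\le c_0\bigl(1+\int_0^t R(s)\,ds\bigr)=:c$ uniformly in $n$ and $t\le T$, with $c$ depending only on the listed data (through $\phi|_{[0,T]}$, hence $K|_{[0,T]}$ and $T$). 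Finally, since $X$ has continuous paths we have $\tau_n\to\infty$ almost surely, and Fatou's lemma upgrades this to $m_t=\E[\abs{X_t}^p]\le c$ for all $t\le T$.

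I expect the localisation step to be the main obstacle: unlike for ordinary SDEs, stopping the process does not preserve the clean convolution structure, because the stopped equation carries both a random upper limit and the argument $K(t\wedge\tau_n-s)$. One must therefore either bound $\E[\sup_{s\le t}\abs{X_{s\wedge\tau_n}}^p]$ directly or truncate the coefficients rather than the paths, in each case checking that the resulting inequality retains the form above with $n$-independent constants. Everything else is routine; indeed, once linear growth is verified this is precisely the situation covered by the moment estimate of \cite{abi2019affine}, whose argument transfers verbatim.
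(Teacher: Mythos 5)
Your proposal is correct and follows essentially the same route as the paper, whose proof simply observes that the polynomial structure forces linear growth of $b$ and $\sigma$ and then invokes \cite[Lemma 3.1]{abi2019affine} --- precisely the convolution--Gr\"onwall argument you reconstruct. The localisation difficulty you rightly flag is handled in that reference not by stopping the paths but by multiplying through by the indicator $1_{\{t\le\tau_n\}}$ (so that the kernel argument $K(t-s)$ stays fixed and the integrands are bounded by $c(1+n)$), which yields an $n$-independent convolution inequality for the finite quantity $\mathbb{E}[1_{\{t\le\tau_n\}}|X_t|^p]\le n^p$, after which Fatou's lemma concludes exactly as you describe.
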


\begin{proof}
    The initial curve $g_0$ is continuous and hence bounded on compacts. In addition, by \eqref{eq:fcts_b_a}, the drift function $b$ and the volatility function $\sigma$ are continuous and have linear growth. Therefore, to prove this result we can follow the same argument as in the proof of \cite[Lemma 3.1]{abi2019affine}.  
\end{proof}

The following theorem guarantees the existence of solutions to \eqref{eq_CPVP}. The next assumption is needed to state the result.

\begin{assumption}\label{ass:KcontinL2}
    There exists a constant $\gamma\in(0,2]$ such that $\int_0^h |K(t)|^2dt = O(h^\gamma)$ and $\int_0^T |K(t+h)-K(t)|^2 dt = O(h^\gamma)$ for every $T<\infty$.
\end{assumption}

\begin{theorem}[Existence of polynomial Volterra processes]
    Suppose that Assumption~\ref{ass:KcontinL2} holds. Then, for any $\alpha<\gamma/2$, \eqref{eq_CPVP} admits a weak solution $X$ such that $X-g_0$ has $\alpha$-H\"older continuous trajectories.

\end{theorem}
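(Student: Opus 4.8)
The plan is to reduce the statement to the weak-existence result for stochastic Volterra equations with continuous, linear-growth coefficients established in \cite{abi2019affine}, and to verify its hypotheses in the present polynomial setting. Indeed, since $b$ has affine components and $a = \sigma\sigma^\top$ has entries in $\mathrm{Pol}_2(\mathbb{R}^d)$, the maps $b$ and $\sigma$ are continuous and of linear growth, exactly as recorded in the proof of Proposition~\ref{prop:estimatesmoments}, while Assumption~\ref{ass:KcontinL2} supplies the two quantitative controls on $K$ needed for the Hölder estimates. I would then carry out the underlying approximation-and-tightness scheme explicitly, as this is what ultimately yields the stated regularity.

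First I would regularize the volatility, leaving $b$ untouched as it is already globally Lipschitz: choose Lipschitz maps $\sigma^n$ converging to $\sigma$ locally uniformly and sharing a common linear-growth constant (e.g.\ by mollification followed by a growth-preserving truncation). For the Lipschitz pair $(b,\sigma^n)$, a standard Picard/contraction argument based on $K \in L^2_{\mathrm{loc}}$ produces a unique continuous strong solution $X^n$ of \eqref{eq_CPVP}. Since the linear-growth constants of $\sigma^n$ are uniform in $n$, Proposition~\ref{prop:estimatesmoments} gives $\sup_n \sup_{0\le t\le T} \mathbb{E}[|X^n_t|^p] \le c$ for every $p$ and $T$, with $c$ independent of $n$.

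Next I would establish uniform increment estimates. Writing $Y^n = X^n - g_0$ and combining the uniform moment bounds with the Burkholder--Davis--Gundy inequality and the two bounds in Assumption~\ref{ass:KcontinL2}, I expect
$$\mathbb{E}\big[|Y^n_t - Y^n_s|^p\big] \le C\,|t-s|^{\gamma p/2}, \qquad 0\le s\le t\le T,$$
with $C$ independent of $n$, the first bound of the Assumption controlling the near-diagonal contribution and the second the shifted-kernel difference. Together with the moment bounds, these yield, via the Kolmogorov--Chentsov criterion, tightness of the laws of $(X^n,W)$ in $C([0,T],\mathbb{R}^d)^2$. By Prokhorov and Skorokhod representation I would pass to a subsequence realized on a common space with almost sure locally uniform convergence, identify the limit $X$ as a weak solution using $\sigma^n \to \sigma$ locally uniformly together with the convergence of $X^n$, and read off $\alpha$-Hölder continuity of $X - g_0$ from the displayed estimate (preserved in the limit by Fatou), any $\alpha < \gamma/2$ being reached as $p \to \infty$ since $\gamma/2 - 1/p \to \gamma/2$.

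The hard part will be the passage to the limit in the stochastic convolution $\int_0^t K(t-s)\sigma^n(X^n_s)\,dW_s$ when $K$ is genuinely singular, where one cannot simply invoke continuity of the Itô map. The remedy I would follow is to work with the martingales $M^n = \int_0^\cdot \sigma^n(X^n_s)\,dW_s$, establish their joint convergence together with $X^n$ and the driving Brownian motions, and prove $L^2$-stability of the deterministic map $L \mapsto K \ast L$ along the approximating martingales, controlling the singularity of $K$ uniformly through Assumption~\ref{ass:KcontinL2}; identifying the quadratic variation of the limiting martingale as $\int_0^\cdot a(X_s)\,ds$ then closes the argument.
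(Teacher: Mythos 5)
Your proposal follows essentially the same route as the paper, whose proof of this theorem consists of citing \cite[Theorem 3.4]{abi2019affine} and noting how its regularization-and-tightness scheme adapts to a continuous initial curve $g_0$. Everything you write up to and including the Kolmogorov--Chentsov tightness step is a faithful unpacking of that scheme: linear growth of $b$ and $\sigma$ from \eqref{eq:fcts_b_a}, Lipschitz approximation of $\sigma$ with uniform growth constants, uniform moment bounds, uniform increment estimates of order $|t-s|^{\gamma p/2}$ from Assumption~\ref{ass:KcontinL2}, and the H\"older-exponent bookkeeping $\alpha<\gamma/2-1/p$ with $p\to\infty$ are all correct.

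The one place where your argument is not yet a proof is the identification of the limit, which you rightly single out as the hard part, but for which your proposed remedy is too vague to close the gap. For singular $K$ the stochastic convolution $\int_0^t K(t-s)\,dM^n_s$ is an It\^o integral, not a pathwise-continuous functional of the path of $M^n$, so joint convergence in law of $(X^n,M^n,W)$ does not by itself identify its limit; and an ``$L^2$-stability of $L\mapsto K\ast L$'' statement is ambiguous between convolving with $dM^n$ (the problematic object) and with $M^n$ itself. The device the paper points to --- precisely to replace the resolvent-of-the-first-kind step of \cite{abi2019affine}, which would require $L$ with $K\ast L\equiv 1$ to exist --- is to pass to the integrated form \eqref{eq:CPVP_int}: by the stochastic Fubini theorem, $\int_0^t X^n_s\,ds=\int_0^t g_0(s)\,ds+\int_0^t K(t-s)Z^n_s\,ds$ with $Z^n$ as in \eqref{eq:PZ}, and the right-hand side is an ordinary Lebesgue convolution of the locally integrable kernel $K$ against the \emph{continuous path} $Z^n$, hence stable under uniform convergence. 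One then identifies the limiting $Z$ as a continuous semimartingale with characteristics $\int_0^\cdot b(X_s)\,ds$ and $\int_0^\cdot a(X_s)\,ds$ (your quadratic-variation step) and recovers \eqref{eq_CPVP} from the integrated identity by continuity in $t$. With that replacement your argument is complete; without it, the passage to the limit in the stochastic convolution remains a genuine gap.
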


\begin{proof}
    This result is a consequence of \cite[Theorem 3.4]{abi2019affine} and its proof, which can be adapted to the framework of an initial continuous deterministic curve $g_0$ instead of a constant initial condition $X_0\in\mathbb{R}^d$. The main difference is that the proof presented in \cite{abi2019affine} relies on the existence of a resolvent of the first kind for the kernel $K$. This hypothesis is not necessary because the same tightness argument to construct the weak solution in \cite{abi2019affine} can be adapted by considering the integrated form of the equation \eqref{eq_CPVP}
    \begin{equation}\label{eq:CPVP_int}
        \int_0^t X_sds = \int_0^t g_0(s)ds+\int_0^t K(t-s)Z_sds
    \end{equation}
    with $Z$ as in \eqref{eq:PZ} instead of the form $L\ast (X-g_0)=Z$ with $L$ the resolvent of the first kind of $K$. The integrated form \eqref{eq:CPVP_int} of the stochastic Volterra equation \eqref{eq_CPVP}, which is suitable for stability results as illustrated in \cite[Section 3]{abi2021weak}, can be obtained using an argument based on the stochastic Fubini theorem as shown in \cite[Lemma 3.2]{abi2021weak}.
\end{proof}

\begin{example}
 Assumption \ref{ass:KcontinL2} is satisfied for the (possibly singular) fractional kernel $K(t) = t^{H-1/2}$ with $H \in (0,1)$. In particular, the corresponding polynomial Volterra  process \eqref{eq_CPVP} fails to be a  semimartingale whenever $H \neq 1/2$.    
\end{example}

\subsection{Polynomial Volterra processes on the unit ball}

In this section, we construct polynomial Volterra processes \eqref{eq_CPVP},
that remain in the unit ball of $\R^d$ defined by 
$$ \mathcal B = \{ x \in \R^d: x^\top x \leq 1  \}.$$

 For the rest of this section, we will assume that the kernel $K$ is scalar $K:\mathbb{R}_+ \rightarrow \mathbb{R}$, and we use the so-called {\em resolvent of the first kind}, which is a measure $L$ on $\R_+$ of locally bounded variation such that
\begin{equation} \label{res_L}
K*L = L*K \equiv 1,
\end{equation}
see \cite[Definition~5.5.1]{GLS:90}. Some examples of resolvents of the first kind are given in \cite[Table 1]{abi2019affine}. A resolvent of the first kind does not always exist. For the main result of this section, we need an additional assumption for the kernel.
 \begin{assumption}\label{ass:K orthant}
    $K$ is nonnegative, not identically zero, non-increasing and continuous on $(0,\infty)$, and its resolvent of the first kind $L$ is nonnegative and non-increasing in the sense that $s\mapsto L([s,s+t])$ is non-increasing for all $t\ge0$.
\end{assumption} 

\begin{example} \label{E:CM}
If $K$ is completely monotone on $(0,\infty)$ and not identically zero, then Assumption \ref{ass:K orthant} holds due to \cite[Theorem~5.5.4]{GLS:90}. Recall that a function $f$ is called completely monotone on $(0,\infty)$ if it is infinitely differentiable with $(-1)^k f^{(k)}(t) \ge0$ for all $t>0$ and $k\geq 0$. This covers, for instance, any constant positive kernel, the fractional kernel $t^{H-1/2}$ with $H\in(0,1/2]$, and the exponentially decaying kernel ${\rm e}^{-\beta t}$ with $\beta>0$. Moreover, sums and products of completely monotone functions are completely monotone.
\end{example} 

The next theorem provides the weak existence and uniqueness of a $\mathcal B$-valued polynomial Volterra process. The construction follows from a more general result for $\mathcal B$-valued stochastic Volterra equations given in Theorem~\ref{T:invarianceball}.   

\begin{theorem}[Existence and uniqueness of polynomial Volterra processes in the unit ball]\label{T:invarianceballpoly} Fix a scalar kernel $K:[0,T] \to \mathbb R$ that satisfies Assumptions \ref{ass:KcontinL2} and \ref{ass:K orthant}.
    Assume that $b$ and $\sigma$ are such that 
    $$   b(x) = b_0 + Bx, \quad \sigma (x) = c \sqrt{1- x^\top x} {\mathrm{I}}_d   1_{\{x\in \mathcal B\}}, \quad x \in \R^d, $$
    where ${\mathrm{I}}_d$ is the $d\times d$ identity matrix,  $c\in \R$, and $b_0\in \R^d$ and $B \in \R^{d\times d}$ are such that 
    \begin{align}\label{eq:exampleB}
        x^\top( b_0 + B x) \leq 0 \quad x\in \partial \mathcal B.
    \end{align} For any $X_0 \in \mathcal B$, there exists a unique continuous weak solution $X$ to \eqref{eq_CPVP} such that $X_t \in \mathcal B$ a.s. for every $t\in [0,T]$. 
\end{theorem}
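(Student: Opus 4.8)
The plan is to derive existence by specializing the general invariance result Theorem~\ref{T:invarianceball} to the present coefficients, and to obtain uniqueness in law from the polynomial moment machinery of Section~\ref{sec:Moments}. I first record that these coefficients are of polynomial type: the drift $b(x)=b_0+Bx$ has affine components, and on $\mathcal B$ the diffusion matrix is $a(x)=\sigma(x)\sigma(x)^\top=c^2(1-x^\top x)\mathrm{I}_d$, whose entries lie in $\mathrm{Pol}_2(\R^d)$. Thus any solution confined to $\mathcal B$ is a continuous polynomial Volterra process in the sense of \eqref{eq_CPVP}--\eqref{eq:fcts_b_a}; the indicator in $\sigma$ merely provides a continuous extension off $\mathcal B$ and does not affect the law of a $\mathcal B$-valued solution.

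For existence I would verify the two geometric conditions underlying Theorem~\ref{T:invarianceball}. First, the volatility must degenerate on the boundary: from $\sigma(x)=c\sqrt{1-x^\top x}\,\mathrm{I}_d\,1_{\{x\in\mathcal B\}}$ we get $\sigma(x)=0$, hence $a(x)=0$, for every $x\in\partial\mathcal B$, which is precisely what prevents the possibly singular kernel from pushing $X$ across the boundary. Second, the drift must satisfy the appropriate inward (sub-tangency) condition: since the outward normal to $\mathcal B$ at $x\in\partial\mathcal B$ is parallel to $x$, this reduces to $x^\top b(x)=x^\top(b_0+Bx)\le 0$, which is exactly \eqref{eq:exampleB}. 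Combining these with the kernel Assumptions~\ref{ass:KcontinL2} and \ref{ass:K orthant}, Theorem~\ref{T:invarianceball} furnishes a continuous weak solution $X$ with $X_t\in\mathcal B$ almost surely for all $t\in[0,T]$.

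For uniqueness in law I would argue as follows. Any $\mathcal B$-valued solution coincides on its state space with the genuinely polynomial coefficients $b(x)=b_0+Bx$ and $a(x)=c^2(1-x^\top x)\mathrm{I}_d$, and since $\mathcal B$ is compact its law is moment-determinate. By the moment formula of Theorem~\ref{th:momentformula} the joint moments of the finite-dimensional distributions solve a deterministic integral system that admits a unique solution depending only on $b_0,B$ and the quadratic coefficients restricted to $\mathcal B$; applying the uniqueness result Theorem~\ref{thm:uniqueness} then shows that two $\mathcal B$-valued solutions share all finite-dimensional distributions, which is the claimed uniqueness in law.

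The main obstacle is the existence step itself, namely showing that the degeneracy of $\sigma$ on $\partial\mathcal B$ together with the kernel conditions confines $X$ to a domain \emph{with curvature}; this is the genuinely hard content and is precisely why the proof routes through Theorem~\ref{T:invarianceball} rather than a direct Nagumo-type tangency argument, which fails for singular kernels. A secondary technical point is that the truncated $\sigma$ renders $a$ non-polynomial globally, so the moment-based uniqueness must be invoked only after restricting to $\mathcal B$, where the coefficients agree with their polynomial extensions.
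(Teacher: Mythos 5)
Your proposal is correct and follows exactly the paper's route: the paper's proof is a one-line appeal to Theorem~\ref{T:invarianceball} for existence (whose hypotheses you correctly verify: continuity, linear growth, $\sigma=0$ and $x^\top b(x)\le 0$ on $\partial\mathcal B$) and to Theorem~\ref{thm:uniqueness} for uniqueness in law (whose moment-determinacy hypothesis you correctly justify via boundedness of $\mathcal B$-valued solutions). You have simply spelled out the verifications that the paper leaves implicit.
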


\begin{proof}
    Straightfoward application of  Theorem~\ref{T:invarianceball} and Theorem \ref{thm:uniqueness} below. 
\end{proof}

By restricting to the one-dimensional case, Theorem~\ref{T:invarianceballpoly} allows the construction of Jacobi Volterra processes on general compact intervals as shown in the next corollary.

\begin{corollary}\label{cor:jacobi}
    Let $\alpha_1 \leq \alpha_2$, $ b \in [\alpha_1,\alpha_2]$, $\lambda\geq 0$ and $c>0$. Fix a scalar kernel $K:[0,T] \to \mathbb R$ that satisfies Assumptions \ref{ass:KcontinL2} and \ref{ass:K orthant}.    Then, there exists a   unique weak $[\alpha_1,\alpha_2]$-valued  solution to the equation 
    \begin{align}\label{eq:svejacobi}
         Y_t &= Y_0 + \lambda\int_0^t K(t-s) (b - Y_s) ds + c\int_0^t K(t-s) \sqrt{(Y_s-\alpha_1)(\alpha_2 - Y_s)}dW_s, \\
         \quad Y_0& \in [\alpha_1,\alpha_2],
    \end{align}
that we call Jacobi Volterra process on $[\alpha_1, \alpha_2]$.
\end{corollary}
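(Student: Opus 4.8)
The plan is to derive Corollary~\ref{cor:jacobi} as a direct specialization of Theorem~\ref{T:invarianceballpoly} to dimension $d=1$ after an affine change of variables that maps the interval $[\alpha_1,\alpha_2]$ onto a symmetric interval of the form $[-1,1]$, which is precisely the one-dimensional unit ball $\mathcal B = \{x \in \R : x^\top x \le 1\}$. First I would introduce the affine map $\varphi(y) = \frac{2}{\alpha_2-\alpha_1}\big(y - \tfrac{\alpha_1+\alpha_2}{2}\big)$, so that $X_t := \varphi(Y_t)$ takes values in $[-1,1]$ iff $Y_t \in [\alpha_1,\alpha_2]$. Since $\varphi$ is affine, applying it to the stochastic Volterra equation \eqref{eq:svejacobi} term by term (using linearity of the convolution integrals and the fact that $\varphi$ acts trivially on the kernel convolution up to the additive constant, which can be absorbed into $g_0$) yields a new stochastic Volterra equation for $X$ of the form \eqref{eq_CPVP} with scalar kernel $K$, constant initial curve $g_0 \equiv \varphi(Y_0) = X_0 \in [-1,1]$, and transformed coefficients $\tilde b$ and $\tilde\sigma$.

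The next step is to verify that the transformed coefficients satisfy the structural hypotheses of Theorem~\ref{T:invarianceballpoly}. The diffusion coefficient transforms as follows: under $\varphi$ the quadratic expression $(y-\alpha_1)(\alpha_2-y)$ becomes, up to the positive scaling factor $\big(\tfrac{\alpha_2-\alpha_1}{2}\big)^2$ introduced by $\varphi'$, exactly $1-x^2 = 1 - x^\top x$. Hence one checks that $\tilde\sigma(x) = c\sqrt{1-x^\top x}\,1_{\{x\in\mathcal B\}}$ with $c$ rescaled appropriately, matching the required form in dimension one with ${\mathrm{I}}_1 = 1$. For the drift, the affine map sends $\lambda(b - y)$ to an affine function $\tilde b(x) = b_0 + Bx$; I would then verify the boundary inward-pointing condition \eqref{eq:exampleB}, namely $x(b_0 + Bx) \le 0$ for $x \in \partial\mathcal B = \{-1,+1\}$. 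This reduces to checking the two inequalities at $x=\pm 1$, which translate back under $\varphi^{-1}$ to the mean-reversion level $b$ lying in $[\alpha_1,\alpha_2]$ together with $\lambda \ge 0$ — precisely the stated hypotheses. The assumptions on $K$ (Assumptions~\ref{ass:KcontinL2} and \ref{ass:K orthant}) are inherited unchanged since the kernel is untouched by the state-space transformation.

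With both the diffusion structure and the boundary condition verified, Theorem~\ref{T:invarianceballpoly} supplies a unique continuous weak solution $X$ confined to $\mathcal B = [-1,1]$ for the transformed equation. Finally, I would transport this back via $Y_t := \varphi^{-1}(X_t)$, noting that $\varphi^{-1}$ is again affine and invertible, so $Y$ solves \eqref{eq:svejacobi}, stays in $[\alpha_1,\alpha_2]$ almost surely, and inherits both existence and uniqueness in law (uniqueness being preserved under an invertible affine transformation of the driving equation). The main obstacle I anticipate is purely bookkeeping: carefully tracking how the additive constant from $\varphi$ interacts with the convolution against $K$ so that it is correctly absorbed into a continuous initial curve $g_0$ rather than spuriously altering the drift, and ensuring the scaling constants from $\varphi'$ are distributed consistently between $\tilde b$ and $\tilde\sigma$ so that the rescaled $c$ remains a genuine real parameter. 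Once the change of variables is set up cleanly, every hypothesis of Theorem~\ref{T:invarianceballpoly} translates transparently into the stated conditions on $\alpha_1,\alpha_2,b,\lambda,c$.
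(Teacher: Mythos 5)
Your proposal is correct and follows essentially the same route as the paper: reduce to the one-dimensional unit ball $\mathcal B=[-1,1]$ via the affine bijection between $[\alpha_1,\alpha_2]$ and $[-1,1]$, check the boundary condition \eqref{eq:exampleB} at $x=\pm1$ (which translates exactly into $b\in[\alpha_1,\alpha_2]$ and $\lambda\ge 0$), and invoke Theorem~\ref{T:invarianceballpoly}. The paper merely runs the change of variables in the opposite direction (constructing $X$ on $[-1,1]$ first and then setting $Y=\frac{\alpha_2-\alpha_1}{2}X+\frac{\alpha_1+\alpha_2}{2}$), and your bookkeeping of the drift, diffusion scaling, and the constant $c$ is consistent with that.
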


\begin{proof}
    We note that in dimension $d=1$ we have that $\mathcal B= [-1,1]$. In this case, the specification of Example~\ref{eq:ballcoeff} yields an $[-1,1]$-valued  Volterra Jacobi process in the form 
    \begin{align*}
        &X_t = X_0 + \int_0^t K(t-s)\lambda(\tilde b -  X_s) ds + \int_0^t K(t-s)c \sqrt{(1-X_s)(1+ X_s)} dW_s,\\
        &X_0 \in [-1,1],
    \end{align*}
for any      $\tilde b\in [-1,1]$.
 Indeed, in this case it is straightforward to check that \eqref{eq:exampleB} is satisfied since
    \begin{align}
        x\lambda(\tilde b- x) = 
\begin{array}{cc}
\begin{cases} 
     \lambda  (\tilde b -1 ) \leq 0 & x=1 \\
       -1 \lambda  (\tilde b +1 ) \leq 0 & x=-1.
\end{cases}
\end{array}
    \end{align}
    Taking $\tilde b$ such that $b= \frac{\alpha_2 - \alpha_1} 2 \tilde b + \frac{\alpha_1 + \alpha_2}{2} $ we readily  get that    $$ Y = \frac{\alpha_2 - \alpha_1} 2 X + \frac{\alpha_1 + \alpha_2}{2}$$
is a   Jacobi Volterra process on $[\alpha_1,\alpha_2]$ that satisfies \eqref{eq:svejacobi}. 
\end{proof}

\begin{remark} In \cite{filipovic2016polynomial} the authors study stochastic invariance of polynomial diffusions, that is solutions to \eqref{eq_CPVP} for the trivial kernel $K \equiv 1$, for more general state-spaces $E \subseteq \mathbb{R}^d$. In the case of $E=\mathcal{B}$, choosing $Q={\mathrm{I}}_d$ and $c=0$ in \cite[Proposition 6.1]{filipovic2016polynomial} essentially corresponds to Theorem \ref{T:invarianceballpoly} for the trivial kernel. In contrast with the diffusion case, where a tagential diffusive behaviour of the volatility component $\sigma$ is possible on the boundary, here we restrict to vanishing volatility at the boundary. Because of the possible singularity of the kernel at $0$, we expect that any tangential diffusive behavior of the volatility $\sigma$ at the boundary will push the process outside the ball $\mathcal B$.   
\end{remark}

\section{Moments of polynomial Volterra processes}
\label{sec:Moments}

Our aim is to find moment formulas for a continuous Volterra process $X$ solving \eqref{eq_CPVP}. More precisely, we want to obtain formulas for expressions of the form 
\begin{equation}\label{eq:goalmoment}
    \mathbb{E}[X_t^{\alpha}] = \mathbb{E}[X_{1,t}^{\alpha_1}\cdots X_{d,t}^{\alpha_d}],\quad t\ge 0,
\end{equation}
with $\alpha =(\alpha_i)_{i=1}^d\in\mathbb{N}_0^d$ a multi-index. One of the difficulties to characterize these moments, compared to the classical framework when the kernel $K$ is equal to ${\mathrm{I}}_d$, stems from the fact that $X$ is not necessarily a Markovian semimartingale. To circumvent this complication, and inspired  by previous works such as \cite{abi2019markovian,bondi2024affine, jacquier2019deep, viens2019martingale}, for each $T\geq 0$, we consider the process (indexed in time by $t$)
\begin{equation}\label{eq:adjforward}
    g_t(T) = g_0(T) + \int_0^t K(T-s)dZ_s,\quad t\leq T,
\end{equation}
for $Z$ as in \eqref{eq:PZ}. The following lemma shows that we can control the moments of the process $g$.
\begin{lemma}
\label{lem:momentsg}
    Suppose that $X$ is a continuous process solving \eqref{eq_CPVP} and define the processes $g$ as in \eqref{eq:adjforward}. Then, for any $p\in\mathbb{N}$ and $0\leq T\leq T'$,
    \begin{equation}\label{eq:estimateG}
        \mathbb{E}\left[\sup_{0\leq t\leq T}|g_t(T)|^p\right]\leq c
    \end{equation}
for some constant $c$ which depends only on $\sup_{0\leq t\leq T'}|g_0(t)|$, $p$, $K|_{[0,T']}$, $b_i$, $A_i,A_{ij}$, and $T'$.
\end{lemma}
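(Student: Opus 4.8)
The plan is to exploit that, although the Volterra process $X$ itself need not be a semimartingale, for each fixed $T$ the process $t\mapsto g_t(T)$ \emph{is} a continuous semimartingale on $[0,T]$, with absolutely continuous drift and a genuine martingale part. Writing out the definition,
\[
g_t(T) = g_0(T) + \underbrace{\int_0^t K(T-s)b(X_s)\,ds}_{=:D_t} + \underbrace{\int_0^t K(T-s)\sigma(X_s)\,dW_s}_{=:M_t},
\]
I would first record that $M$ is a square-integrable martingale on $[0,T]$: since $\sigma$ has linear growth (as $a=\sigma\sigma^\top$ has entries in $\mathrm{Pol}_2(\mathbb{R}^d)$) and $K|_{[0,T']}\in L^2$, the bound $\int_0^T|K(T-s)|^2\,\mathbb{E}[|\sigma(X_s)|^2]\,ds\le C\int_0^{T'}|K(s)|^2\,ds<\infty$ follows from Proposition~\ref{prop:estimatesmoments}. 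I would then split $\sup_{0\le t\le T}|g_t(T)|^p$ via $(a+b+c)^p\le 3^{p-1}(a^p+b^p+c^p)$ and bound the three pieces separately; it suffices to treat $p\ge 2$, the case $p=1$ following by Jensen's inequality on the probability space from the $p=2$ estimate.

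The deterministic term contributes $|g_0(T)|^p\le(\sup_{0\le t\le T'}|g_0(t)|)^p$. For the drift term I would use the pathwise bound $\sup_{0\le t\le T}|D_t|\le\int_0^T|K(T-s)|\,|b(X_s)|\,ds$, together with Jensen's inequality with respect to the finite measure $|K(T-s)|\,ds$ on $[0,T]$ (note $K|_{[0,T]}\in L^1$ since $L^2[0,T]\subset L^1[0,T]$) to pull the $p$-th power inside; taking expectations and invoking the linear growth of $b$ together with Proposition~\ref{prop:estimatesmoments} then yields a bound of the form $C\big(\int_0^{T'}|K(s)|\,ds\big)^p$.

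For the martingale term the key step is the Burkholder--Davis--Gundy inequality, giving $\mathbb{E}[\sup_{0\le t\le T}|M_t|^p]\le C_p\,\mathbb{E}\big[(\int_0^T|K(T-s)|^2|\sigma(X_s)|^2\,ds)^{p/2}\big]$. I would apply Jensen's inequality with respect to the finite measure $|K(T-s)|^2\,ds$, of total mass $\|K\|^2_{L^2[0,T]}$, to the convex map $y\mapsto y^{p/2}$, obtaining $\mathbb{E}[\,\cdot\,]\le\|K\|^{p-2}_{L^2[0,T]}\int_0^T|K(T-s)|^2\,\mathbb{E}[|\sigma(X_s)|^p]\,ds$, and finish using the linear growth of $\sigma$ and Proposition~\ref{prop:estimatesmoments} to control $\sup_s\mathbb{E}[|\sigma(X_s)|^p]<\infty$.

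Finally I would check that every constant obtained is monotone in $T$ and hence dominated by its $T'$-counterpart: $\|K\|_{L^2[0,T]}\le\|K\|_{L^2[0,T']}$, likewise for the $L^1$ norm, and the moment bounds from Proposition~\ref{prop:estimatesmoments} on $[0,T]$ are dominated by those on $[0,T']$; this is precisely what produces a single constant $c$ valid for all $0\le T\le T'$ and depending only on the listed data. The main (and essentially only) subtlety is the opening observation---that $g_\cdot(T)$ is a martingale-plus-drift in $t$ for frozen $T$---which is what makes the maximal/BDG inequality applicable despite $X$ being non-Markovian; the remainder is an adaptation of the estimates behind Proposition~\ref{prop:estimatesmoments} (equivalently \cite[Lemma~3.1]{abi2019affine}), now with the supremum inside the expectation.
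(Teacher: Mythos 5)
Your proposal is correct and follows essentially the same route as the paper: decompose $g_\cdot(T)$ into initial curve, drift and martingale parts, apply Burkholder--Davis--Gundy plus Jensen to the stochastic integral and Jensen to the drift, and close the estimate with the a priori moment bounds of Proposition~\ref{prop:estimatesmoments}, noting monotonicity of all constants in $T\le T'$. The only cosmetic difference is that you apply Jensen with respect to the measure $|K(T-s)|\,ds$ for the drift term where the paper uses Cauchy--Schwarz to keep everything in terms of $\|K\|_{L^2[0,T']}$; both yield a constant depending only on $K|_{[0,T']}$.
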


\begin{proof}
    It is sufficient to prove the inequality \eqref{eq:estimateG} for $p\geq 2$. Given $T\ge 0$, thanks to the Burkholder-Davis-Gundy and Jensen's inequalities
    \begin{equation}\label{eq:BDG_processG}
    \begin{split}
        \mathbb{E}\left[\sup_{0\leq t\leq T}\left|\int_0^t K(T-s)\sigma(X_s)dW_s\right|^p\right]&\leq C\, \mathbb{E}\left[\left(\int_0^T |K(T-s)|^2|a(X_s)|ds\right)^{
        \frac{p}{2}
        }\right]\\
        &\leq C \left(\int_0^T|K(t)|^2dt\right)^{\frac{p}{2}}\sup_{0\leq t\leq T}\mathbb{E}[|a(X_t)|^{\frac{p}{2}}]\\
        &\leq C \left(\int_0^{T'}|K(t)|^2dt\right)^{\frac{p}{2}}\sup_{0\leq t\leq T'}\mathbb{E}[|a(X_t)|^{\frac{p}{2}}]
    \end{split}
    \end{equation}
    for some constant $C>0$. Similarly, multiple applications of Jensen's inequality yield
    \begin{equation}\label{eq:Jensen_driftG}
        \mathbb{E}\left[\left(\sup_{0\leq t\leq T}\left|\int_0^t K(T-s)b(X_s)ds\right|\right)^p\right]\leq (T')^{\frac{p}{2}}\left(\int_0^{T'}|K(t)|^2dt\right)^{\frac{p}{2}}\sup_{0\leq t\leq T'}\mathbb{E}[|b(X_t)|^{p}].
    \end{equation}
    Since the functions $b$ and $a$ have the form \eqref{eq:fcts_b_a}, \eqref{eq:BDG_processG} and \eqref{eq:Jensen_driftG} together with Proposition~\ref{prop:estimatesmoments} yield \eqref{eq:estimateG}.
\end{proof} 

Notice that $g_T(T)=X_T$ and more generally
\begin{equation}\label{eq:altfmlalift}
        g_t(T) = \mathbb{E}_t\left[X_T-\int_t^T K(T-s)b(X_s)ds\right],\quad t\leq T,
\end{equation}
because -- as a result of the proof of Lemma \ref{lem:momentsg} above and in particular \eqref{eq:BDG_processG} --  the process $M_t = \int_0^t K(T-s)\sigma(X_s)dW_s$, $t\leq T$, is a martingale. Moreover, for each $T\geq 0$, $g_t(T)$ is a semimartingale with dynamics
\begin{equation}\label{eq:dynG}
    dg_t(T) = K(T-t)dZ_t = K(T-t)b(X_t)dt+ K(T-t)\sigma(X_t)dW_t,
\quad t < T.
\end{equation} To study the moments of $X$ in \eqref{eq:goalmoment}, we need to understand the behavior of more general moments of the (infinite dimensional) processes $g$ defined in \eqref{eq:adjforward}. To this end, we consider expressions of the form
\begin{equation}\label{eq:momentslift}
    \mathfrak{m}^{(p)}(t,T_1,\ldots,T_p;w) = \mathbb{E}\left[\prod_{n=1}^p g_{i_n,t}(T_n) \right]
\end{equation}
where $p\in\mathbb{N}$, $0\leq t\leq \min\{T_1,\ldots,T_p\}$, $w=(i_n)_{n=1}^p\in\{1,\ldots,d\}^p$, and $g_{i_n,t}(T_n)$ is the $i_n$-th coordinate of $g_t(T_n)$. If $d=1$, we can omit the argument $w$ and write $\mathfrak{m}^{(p)}(t,T_1,\ldots,T_p)$ for $p\in\mathbb{N}$. We shall use the convention $\mathfrak{m}^{(0)}\equiv 1$.

Notice that
\begin{equation}\label{eq:moments_f_vs_X}
    \mathfrak{m}^{(p)}(t,t,\ldots,t;w) = \mathbb{E}[X_t^{\alpha(w)}]
\end{equation}
where $\alpha(w)$ is the multi-index given by $\alpha_k(w) = \#\{n: i_n=k\}$, $k=1,\ldots,d$. In particular, $|\alpha(w)|=p$.  

\subsection{The main moment formula}\label{sec:mainformula}

The main result of this section is a characterization for the functions $\mathfrak{m}$ defined in \eqref{eq:momentslift}, which in view of \eqref{eq:moments_f_vs_X} determine the moments. More precisely, we will see in Theorem~\ref{th:momentformula} below, that for some fixed level $N$, the vector-valued function \begin{equation}\label{eq:momentfunction}
    (t,T_1,\dots,T_N) \mapsto \{\mathfrak{m}^{(p)}(t,T_1,\dots,T_p;w): p\in\{0,\dots,N\} \text{ and }w\in \{1,\ldots,d\}^p \}
\end{equation} is the unique continuous solution to a specific integral equation. 
Referring to the terminology of \cite{cuchiero2021infinite}, this thus gives an existence and uniqueness result for the bidual moment formula (which had to be assumed in the generic infinite dimensional setting of \cite{cuchiero2021infinite}).
In order to give a precise statement, we have to fix some notations.
Given $N\in\mathbb{N}$, we define the set
\begin{equation}\label{eq:setI_N}
    \mathcal{I}^{(N)} = \{(p,w) : p\in\{0,1,\ldots,N\} \text{ and }w\in \{1,\ldots,d\}^p\}.
 \end{equation}
Let $D_N$ be the cardinality of the set $\mathcal{I}^{(N)}$. Notice that $D_N=N+1$ for $d=1$ and $D_N = \sum_{p=0}^N d^p = (d^{N+1}-1)/(d-1)$ for $d>1$. For $T\ge 0$, let 
\begin{equation}\label{eq:timedomain}   
\mathcal{D}^{(N)}_T = \{(t,T_1,\ldots,T_{N})\in [0,T]^{N+1}:t\leq \min\{T_1,\ldots,T_N\}\},  
\end{equation}
and let $\pi$ be an enumeration of $\mathcal{I}^{(N)}$. As we will see in Theorem \ref{th:momentformula} below, the function in \eqref{eq:momentfunction} belongs to the following space.
\begin{definition}\label{eq:spacechiN}
Let $\mathcal{X}^{(N)}_T$ be the space of $\mathbb{R}^{D_N}$-valued bounded functions $\mathbf{f}$ on $\mathcal{D}_T^{(N)}$ such that the $\pi(p,w)$-th component $\mathbf{f}_{\pi(p,w)}$ only depends on the variables $(t,T_1,\dots,T_p)$ for any $(p,w)\in \mathcal{I}^{(N)}$, and $\mathbf{f}_{\pi(0,\emptyset)}\equiv 1$.
\end{definition}
Given $\mathbf{f}\in \mathcal{X}^{(N)}_T$, define the function $\mathcal{M}^{(N)}_T\mathbf{f}$ on $\mathcal{D}_T^{(N)}$ as follows: $(\mathcal{M}^{(N)}_T\mathbf{f})_{\pi(0,\emptyset)}\equiv 0$, and for $(p,w)\in \mathcal{I}^{(N)}$ with $1\leq p\leq N$ and $w=(i_n)_{n=1}^p$,
\begin{equation}\label{eq:operatorM}
\begin{split}
 &(\mathcal{M}^{(N)}_T\mathbf{f})_{\pi(p,w)}(t,T_1,\ldots,T_{N})=
    \sum_{n=1}^p \int_0^t e_{i_n}^\top K(T_n-r)b_0 \mathbf{f}_{\pi(p-1,w_{-n})}(r,(T_m)_{m\neq n})dr\\
    &\quad+\sum_{n=1}^p\sum_{j=1}^d  \int_0^t e_{i_n}^\top K(T_n-r)b_j \mathbf{f}_{\pi(p,w_{-n}^j)}(r,r,(T_m)_{m\neq n})dr\\
    &\quad+\sum_{1\leq n<m\leq p}\int_0^t  e_{i_n}^\top K(T_n-r)A_0K(T_m-r)^\top e_{i_m}\mathbf{f}_{\pi(p-2,w_{-n,-m})}(r,(T_l)_{l\neq m,n})dr\\
    &\quad+\sum_{1\leq n<m\leq p}\sum_{j=1}^d \int_0^t e_{i_n}^\top K(T_n-r)A_jK(T_m-r)^\top e_{i_m}\mathbf{f}_{\pi(p-1,w_{-n,-m}^j)}(r,r,(T_l)_{l\neq m,n})dr\\
    &\quad+\sum_{1\leq n<m\leq p}\sum_{j,k=1}^d \int_0^t e_{i_n}^\top K(T_n-r)A_{jk}K(T_m-r)^\top e_{i_m}\mathbf{f}_{\pi(p,w_{-n,-m}^{j,k})}(r,r,r,(T_l)_{l\neq m,n})dr.
\end{split}
\end{equation}
In \eqref{eq:operatorM}, $e_i$ is the $i$-th canonical vector in $\mathbb{R}^d$, $w_{-n}$ is the vector obtained by erasing the $n$-th coordinate of $w$, $w_{-n,-m}$ is the vector obtained by erasing the $n$-th and $m$-th coordinates of $w$, $w_{-n}^j$ is the vector whose first coordinate is $j$ and the other coordinates are given by $w_{-n}$, and $w_{-n,-m}^{j,k}$ is the vector whose first two coordinates are $j,k$ and the other coordinates are given by $w_{-n,-m}$. Notice that thanks to the local square-integrability of $K$ the right side of \eqref{eq:operatorM} is well-defined for $\mathbf{f}\in\mathcal{X}_T^{(N)}$ and, moreover, $\mathcal{M}_T^{(N)}\mathbf{f}\in\mathcal{X}_T^{(N)}$.

The following theorem provides formulas for the moments defined in \eqref{eq:momentslift}. In view of \eqref{eq:moments_f_vs_X}, this result also establishes relations between the moments of $X$ in \eqref{eq:goalmoment} and the moments of the process $g$.

\begin{theorem}[Main moment formula]\label{th:momentformula}
Fix $N\in\mathbb{N}$ and $T\geq 0$. Define $\mathbf{m}:\mathcal{D}_T^{(N)}\to\mathbb{R}^{D_N}$ by $\mathbf{m}_{\pi(0,\emptyset)}=\mathfrak{m}^{(0)}\equiv 1$, and for $(p,w)\in\mathcal{I}^{(N)}$ with $1\leq p\leq N$, $\mathbf{m}_{\pi(p,w)}(t,T_1,\ldots,T_N)=\mathfrak{m}^{(p)}(t,T_1,\ldots,T_p;w)$ as in \eqref{eq:momentslift}. Then $\mathbf{m}\in\mathcal{X}_T^{(N)}$ (see Definition \ref{eq:spacechiN}) and $\mathbf{m}$ solves the integral equation
\begin{equation}\label{eq:momentformula}
    \mathbf{m}(t,T_1,\ldots,T_N) = \mathbf{m}(0,T_1,\ldots,T_N) + (\mathcal{M}_T^{(N)}\mathbf{m})(t,T_1,\ldots,T_N),\quad (t,T_1,\ldots,T_N)\in\mathcal{D}_T^{(N)},
\end{equation}
with $\mathcal{M}_T^{(N)}$ as in \eqref{eq:operatorM}. Furthermore, $\mathbf{m}$ is the unique solution in $\mathcal{X}_T^{(N)}$ of \eqref{eq:momentformula} with initial condition $\mathbf{m}(0,T_1,\ldots,T_N)$, and $\mathbf{m}$ is continuous on $\mathcal{D}_T^{(N)}$.
\end{theorem}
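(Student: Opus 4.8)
The plan is to apply the Itô product formula to $\prod_{n=1}^p g_{i_n,t}(T_n)$, take expectations, and exploit the identity $g_{j,r}(r)=X_{j,r}$ (cf.\ the remark that $g_T(T)=X_T$) to close the resulting relations into the self-contained system \eqref{eq:momentformula}. The membership $\mathbf{m}\in\mathcal{X}_T^{(N)}$ is the easy part: boundedness of each component follows from Lemma~\ref{lem:momentsg} together with Hölder's inequality applied to the product in \eqref{eq:momentslift}, while the required dependence structure (that $\mathbf{m}_{\pi(p,w)}$ depends only on $(t,T_1,\dots,T_p)$, and $\mathbf{m}_{\pi(0,\emptyset)}\equiv 1$) is immediate from the definition \eqref{eq:momentslift}.

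For the integral equation, fix $(p,w)\in\mathcal{I}^{(N)}$ with $1\le p\le N$ and recall from \eqref{eq:dynG} that each $g_{i_n,t}(T_n)$ is a semimartingale with $dg_{i_n,t}(T_n)=e_{i_n}^\top K(T_n-t)b(X_t)\,dt+e_{i_n}^\top K(T_n-t)\sigma(X_t)\,dW_t$ and quadratic covariations $d\qv{g_{i_n}(T_n),g_{i_m}(T_m)}_t=e_{i_n}^\top K(T_n-t)a(X_t)K(T_m-t)^\top e_{i_m}\,dt$. The Itô product formula expands $d\prod_{n=1}^p g_{i_n,t}(T_n)$ into first-order terms $\bigl(\prod_{l\neq n}g_{i_l,t}(T_l)\bigr)\,dg_{i_n,t}(T_n)$ summed over $n$, and covariation terms $\bigl(\prod_{l\neq n,m}g_{i_l,t}(T_l)\bigr)\,d\qv{g_{i_n}(T_n),g_{i_m}(T_m)}_t$ summed over $n<m$. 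Substituting the affine form of $b$ and the quadratic form of $a$ from \eqref{eq:fcts_b_a}, and crucially replacing every factor $X_{j,r}$ by $g_{j,r}(r)$, turns each monomial $X_{j,r}\prod_l g_{i_l,r}(T_l)$ or $X_{j,r}X_{k,r}\prod_l g_{i_l,r}(T_l)$ into a product of $g$-factors of the same or lower order, with the newly created factors carrying maturity equal to the integration time $r$. Taking expectations and matching terms reproduces exactly the summands of \eqref{eq:operatorM}: the $b_0$ and $b_j$ contributions give the first two sums, while the $A_0$, $A_j$, $A_{jk}$ contributions give the last three.

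Two technical points must be verified in this computation. First, the stochastic integrals appearing after integration are genuine martingales, so their expectation vanishes; this follows from the moment bounds of Lemma~\ref{lem:momentsg}, the at most polynomial growth of $a$, and $K\in L^2_{\mathrm{loc}}$, exactly as in the Burkholder--Davis--Gundy estimate \eqref{eq:BDG_processG}. Second, the interchange of expectation and time integration is justified by Fubini's theorem using the same integrability bounds. Note also that whenever an argument is set equal to $r$ one has $r\le t\le\min\{T_1,\dots,T_N\}$, so the evaluated moments remain well defined on $\mathcal{D}_T^{(N)}$ and $\mathcal{M}_T^{(N)}\mathbf{m}\in\mathcal{X}_T^{(N)}$.

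It remains to prove uniqueness and continuity. Since $\mathcal{M}_T^{(N)}$ is affine in $\mathbf{f}$, \eqref{eq:momentformula} is a linear integral equation of Volterra type, and each component couples only to components of order $\le p$, with the order-$p$ coupling entering solely through factors evaluated at the running time $r$; this triangular-in-order structure makes the system amenable to a fixed-point argument. The main obstacle is that $K$ may be genuinely singular at the origin (e.g.\ the fractional kernel $t^{H-1/2}$ with $H<1/2$), so a naive Gronwall estimate does not close. I would instead invoke the generalized Gronwall inequality for weakly singular kernels established in Appendix~\ref{app:inteq}, which yields both existence and uniqueness in $\mathcal{X}_T^{(N)}$ and thereby identifies $\mathbf{m}$ as the unique solution. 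Continuity of $\mathbf{m}$ on $\mathcal{D}_T^{(N)}$ then follows from the representation \eqref{eq:momentformula}: continuity in the $T_n$ variables comes from the $L^2$-continuity of the translates $r\mapsto K(T_n-r)$ guaranteed by Assumption~\ref{ass:KcontinL2}, and continuity in $t$ is immediate as the right-hand side is an integral with upper limit $t$, with the passage to the limit controlled by dominated convergence through the bounds above.
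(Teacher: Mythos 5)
Your proposal follows essentially the same route as the paper: It\^o's product formula applied to $\prod_{n=1}^p g_{i_n,t}(T_n)$ using \eqref{eq:dynG}, the identity $g_{j,r}(r)=X_{j,r}$ to close the system, the moment bounds of Lemma~\ref{lem:momentsg} to kill the martingale part and justify Fubini, and the well-posedness results of Appendix~\ref{app:inteq} for uniqueness and continuity. Two cosmetic remarks: the appendix actually proceeds by a Banach fixed-point argument in an exponentially weighted sup-norm (which handles the integrable singularity of $K$ in the same spirit as the ``generalized Gronwall'' you invoke), and continuity is best obtained, as there, by identifying $\mathbf{m}$ with the continuous fixed point via uniqueness among bounded functions --- your direct dominated-convergence argument from \eqref{eq:momentformula} would need the continuity of the integrand in the $T_n$-variables, which is what one is trying to prove.
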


\begin{proof}
To prove that $\mathbf{m}\in\mathcal{X}_T^{(N)}$ it is enough to show that $\mathbf{m}$ is bounded on $\mathcal{D}_T^{(N)}$. This is a consequence of H\"older's inequality and Lemma \ref{lem:momentsg}. Indeed, by Lemma \ref{lem:momentsg}, for any $i_n\in\{1,\ldots,d\}$, $p\in\mathbb{N}$, and $t\leq T_n\leq T$,
\[
    \mathbb{E}[|g_{i_n,t}(T_n)|^p]\leq \mathbb{E}\left[\sup_{0\leq t\leq T_n}|g_{i_n,t}(T_n)|^p\right]\leq c
\]
where $c$ is constant depending only on $\sup_{0\leq t\leq T}|g_0(t)|$, $p$, $K|_{[0,T]}$, $b_i$, $A_i,A_{ij}$, and $T$. 

We now prove \eqref{eq:momentformula}. Clearly this equation holds by definition over the coordinate $\pi(0,\emptyset)$. We consider then $(p,w)\in\mathcal{I}^{(N)}$ such that $1\leq p\leq N$ and $w=(i_n)_{n=1}^p$. It\^o's formula, together with \eqref{eq:dynG}, yields
\begin{equation}\label{eq:Ito_prod_lift}
\begin{split}
    d\left(\prod_{n=1}^p g_{i_n,r}(T_n)\right) &= \sum_{n=1}^p \left(\prod_{m\neq n} g_{i_m,r}(T_m)\right)dg_{i_n,r}(T_n)\\
    &\quad + \sum_{1\leq n<m\leq p}\left(\prod_{l\neq n,m} g_{i_l,r}(T_l)\right)d\langle g_{i_n,\cdot}(T_n), g_{i_m,\cdot}(T_m)\rangle_r.
\end{split}
\end{equation}

The local martingale part in \eqref{eq:Ito_prod_lift} is a true martingale thanks to \eqref{eq:dynG}, \eqref{eq:fcts_b_a}, the fact that $X_{r}=g_{r}(r)$, and Lemma \ref{lem:momentsg}. The finite variation part can be written as
\begin{align*}
    &\sum_{n=1}^p  e_{i_n}^\top K(T_n-r)b_0\left(\prod_{m\neq n} g_{i_m,r}(T_m)\right)dr\\
    &+\sum_{n=1}^p\sum_{j=1}^d e_{i_n}^\top K(T_n-r)b_jX_{j,r}\left(\prod_{m\neq n} g_{i_m,r}(T_m)\right)dr\\
    &+\sum_{1\leq n<m\leq p}e_{i_n}^\top K(T_n-r)A_0K(T_m-r)^\top e_{i_m}\left(\prod_{l\neq m,n} g_{i_l,r}(T_l)\right)dr\\
    &+\sum_{1\leq n<m\leq p}\sum_{j=1}^d e_{i_n}^\top K(T_n-r)A_jK(T_m-r)^\top e_{i_m}X_{j,r}\left(\prod_{l\neq m,n} g_{i_l,r}(T_l)\right)dr\\
    &+\sum_{1\leq n<m\leq p}\sum_{j,k=1}^d e_{i_n}^\top K(T_n-r)A_{jk}K(T_m-r)^\top e_{i_m}X_{j,r}X_{k,r}\left(\prod_{l\neq m,n} g_{i_l,r}(T_l)\right)dr.
\end{align*}

Since $g_{i_n,r}(r)=X_{i_n,r}$, integrating on $[0,t]$, taking expectation, using Fubini's theorem -- which can be applied thanks to Lemma \ref{lem:momentsg} -- and by the definition of $\mathfrak{m}$ in \eqref{eq:momentslift}, we obtain
\begin{equation}\label{eq:momentformulalong}
\begin{split}
    &\mathfrak{m}^{(p)}(t,T_1,\ldots,T_p;w)=\prod_{n=1}^p g_{i_n,0}(T_n)\\
    &\quad+\sum_{n=1}^p \int_0^t e_{i_n}^\top K(T_n-r)b_0 \mathfrak{m}^{(p-1)}(r,(T_m)_{m\neq n};w_{-n})dr\\
    &\quad+\sum_{n=1}^p\sum_{j=1}^d \int_0^t e_{i_n}^\top K(T_n-r)b_j \mathfrak{m}^{(p)}(r,r,(T_m)_{m\neq n};w_{-n}^j)dr\\
    &\quad+\sum_{1\leq n<m\leq p}\int_0^t e_{i_n}^\top K(T_n-r)A_0K(T_m-r)^\top e_{i_m}\mathfrak{m}^{(p-2)}(r,(T_l)_{l\neq m,n};w_{-n,-m})dr\\
    &\quad+\sum_{1\leq n<m\leq p}\sum_{j=1}^d\int_0^t e_{i_n}^\top K(T_n-r)A_jK(T_m-r)^\top e_{i_m}\mathfrak{m}^{(p-1)}(r,r,(T_l)_{l\neq m,n};w_{-n,-m}^j)dr\\
    &\quad+\sum_{1\leq n<m\leq p}\sum_{j,k=1}^d \int_0^t e_{i_n}^\top K(T_n-r)A_{jk}K(T_m-r)^\top e_{i_m}\mathfrak{m}^{(p)}(r,r,r,(T_l)_{l\neq m,n};w_{-n,-m}^{j, k})dr.
\end{split}
\end{equation}
This is precisely the coordinate $\pi(p,w)$ of the identity \eqref{eq:momentformula}, which concludes the proof of \eqref{eq:momentformula}. The uniqueness of solutions in $\mathcal{X}_T^{(N)}$ of the integral equation \eqref{eq:momentformula} and the fact that $\mathbf{m}$ is continuous on $\mathcal{D}_T^{(N)}$ are a consequence of Corollary \ref{cor:existenceumoments} in Appendix \ref{app:inteq}.  
\end{proof}

\begin{remark}\label{rmk:musiela}
If we define the function
\begin{equation}\label{eq:momentsMusiela}
       \overline{\mathbf{m}}(t,x_1,\ldots,x_N) = \mathbf{m}(t,t+x_1,\ldots,t+x_N),  
\end{equation}
for $t,x_1,\ldots,x_N\geq 0$ such that $t+x_i\leq T$, $i=1,\ldots,N$. Then, Theorem \ref{th:momentformula} implies that the function $\overline{\mathbf{m}}$ is a mild-solution to the following non local PDE
\begin{equation}\label{eq:momentPDE}
\begin{split}
    &\partial_t \overline{\mathbf{m}}_{\pi(p,w)}(t,x_1,\ldots,x_p)=(\partial_{x_1}+\cdots+\partial_{x_p})\overline{\mathbf{m}}_{\pi(p,w)}(t,x_1,\ldots,x_p)\\
    &\quad+\sum_{n=1}^p e_{i_n}^\top K(x_n)b_0 \overline{\mathbf{m}}_{\pi(p-1,w_{-n})}(t,(x_m)_{m\neq n})\\
    &\quad+\sum_{n=1}^p\sum_{j=1}^d e_{i_n}^\top K(x_n)b_j \overline{\mathbf{m}}_{\pi(p,w_{-n}^j)}(t,0,(x_m)_{m\neq n})\\
    &\quad+\sum_{1\leq n<m\leq p} e_{i_n}^\top K(x_n)A_0K(x_m)^\top e_{i_m}\overline{\mathbf{m}}_{\pi(p-2,w_{-n,-m})}(t,(x_l)_{l\neq m,n})\\
    &\quad+\sum_{1\leq n<m\leq p}\sum_{j=1}^d e_{i_n}^\top K(x_n)A_jK(x_m)^\top e_{i_m} \overline{\mathbf{m}}_{\pi(p-1,w_{-n,-m}^j)}(t,0,(x_l)_{l\neq m,n})\\
    &\quad+\sum_{1\leq n<m\leq p}\sum_{j,k=1}^d e_{i_n}^\top K(x_n)A_{jk}K(x_m)^\top e_{i_m} 
    \overline{\mathbf{m}}_{\pi(p,w_{-n,-m}^{j,k})}(t,0,0,(x_l)_{l\neq m,n}).
\end{split}
\end{equation}
Indeed, this can be deduced using the change of variables $T_n=t+x_n$ in \eqref{eq:momentformulalong} together with the definition of the function $\overline{\mathbf{m}}$ in \eqref{eq:momentsMusiela}.
\end{remark}

\begin{remark}
\label{rem:linkclassicalmomentfmla}
In the classical polynomial processes framework, where $K={\mathrm{I}}_d$ and $g_0\equiv X_0\in\mathbb{R}^d$, the process $g$ defined in \eqref{eq:adjforward} coincides with $X$, i.e. $g_t(T)=X_t$. Hence, in this case, the function $\mathbf{m}$ in Theorem \ref{th:momentformula} does not depend on $T_1,\ldots,T_N$. Consequently, the function $\overline{\mathbf{m}}$ defined in \eqref{eq:momentsMusiela} does not depend on $x_1,\ldots,x_N$ and the PDE \eqref{eq:momentPDE} reduces to a linear ODE with constant coefficients. When $d>1$, the dimension of this linear ODE -- which is $D_N=(d^{N+1}-d)/(d-1)$ in the framework of this study -- can be reduced to $\binom{N+d}{N}$. This dimension reduction is possible because, as it can be seen from \eqref{eq:moments_f_vs_X}, multiple $w\in\{1,\ldots,d\}^p$, $1\leq p\leq N$, yield the same moments of $X$. The solution to this linear ODE can be expressed in terms of an exponential matrix. This observation establishes a relation, when $K={\mathrm{I}}_d$, between Theorem \ref{th:momentformula} and the classical moment formula for polynomial processes as stated in \cite[Theorem 3.1]{filipovic2016polynomial}.  
\end{remark}

In the next subsection, using Theorem \ref{th:momentformula}, we establish an alternative moment formula using a variation of constants technique. This moment formula is useful to characterize the first and second order moments of a polynomial Volterra process, and the moments of affine Volterra processes, namely when $A_{ij}=0$ for all $i,j$ in \eqref{eq:fcts_b_a}.

\subsection{Moment formula using a variation of constants technique}\label{sec:varconstmoments}
Let $B$ be a matrix with columns equal to $b_1,\ldots,b_d$ and let $R_B$ be the resolvent of $-KB$, i.e. the solution to the linear equation $KB\ast R_B=KB+R_B$. Thanks to \cite[Lemma 2.5]{abi2019affine},  \eqref{eq_CPVP} is equivalent to the integral equation
\begin{equation*}
    X_t = g_0(t) - \int_0^t R_B(t-s)g_0(s)ds + \left(\int_0^t E_B(s)ds\right)b_0+ \int_0^t E_B(t-s)\sigma(X_s)dW_s
\end{equation*}
where $E_B = K-R_B\ast K$. Applying Theorem \ref{th:momentformula} to this reformulation of the stochastic Volterra equation yields alternative moment formulas. Indeed, let
\begin{equation}\label{eq:adjfwdvarconst}
    \widetilde{g}_t(T) = \widetilde{g}_0(T) + \int_0^t E_B(T-s)\sigma(X_s)dW_s = \mathbb{E}_t[X_{T}],
    \quad t\leq T,
\end{equation}
where $\widetilde{g}_0(t)= g_0(t) - \int_0^t R_B(t-s)g_0(s)ds + \left(\int_0^t E_B(s)ds\right)b_0 $. Define further, for $(p,w)\in\mathcal{I}^{(N)}$, with $1\leq p\leq N$ and $w=(i_n)_{n=1}^p$,
\begin{equation*}
    \widetilde{\mathfrak{m}}^{(p)}(t,T_1,\ldots,T_p;w) = \mathbb{E}\left[\prod_{n=1}^p \widetilde{g}_{i_n,t}(T_n) \right].
\end{equation*}
Adopt as before the convention $\widetilde{\mathfrak{m}}^{(0)}\equiv 1$. Theorem \ref{th:momentformula} shows that for $(p,w)\in\mathcal{I}^{(N)}$, with $1\leq p\leq N$ and $w=(i_n)_{n=1}^p$,
\begin{equation}\label{eq:momentfmlavarconstant}
\begin{split}
    &\widetilde{\mathfrak{m}}^{(p)}(t,T_1,\ldots,T_p;w)=\prod_{n=1}^p \widetilde{g}_{i_n,0}(T_n)\\
    &\quad+\sum_{1\leq n<m\leq p}\int_0^t e_{i_n}^\top E_B(T_n-r)A_0E_B(T_m-r)^\top e_{i_m}\widetilde{\mathfrak{m}}^{(p-2)}(r,(T_l)_{l\neq m,n};w_{-n,-m})dr\\
    &\quad+\sum_{1\leq n<m\leq p}\sum_{j=1}^d\int_0^t e_{i_n}^\top E_B(T_n-r)A_jE_B(T_m-r)^\top e_{i_m} \widetilde{\mathfrak{m}}^{(p-1)}(r,r,(T_l)_{l\neq m,n};w_{-n,-m}^j)dr\\
    &\quad+\sum_{1\leq n<m\leq p}\sum_{j,k=1}^d \int_0^t e_{i_n}^\top E_B(T_n-r)A_{jk}E_B(T_m-r)^\top e_{i_m} \widetilde{\mathfrak{m}}^{(p)}(r,r,r,(T_l)_{l\neq m,n};w_{-n,-m}^{j, k})dr.
\end{split}
\end{equation}

The following two remarks present two important consequences of \eqref{eq:momentfmlavarconstant}.

\begin{remark}
This formulation of the moment formula yields more explicit expressions for the first and second order moments. Indeed, the definition of $\widetilde{g}$ in \eqref{eq:adjfwdvarconst} provides directly a formula for the first order moments $\widetilde{\mathfrak{m}}^{(1)}$. Regarding the second order moments, observe that by taking $p=2$ and $t=T_1=T_2$ in \eqref{eq:momentfmlavarconstant}, we obtain a linear system of integral convolution equations for the functions $\mathbf{f}_{\pi(2,w)}(t)=\widetilde{\mathfrak{m}}^{(2)}(t,t,t;w)$, $w=(i_n)_{n=1}^2\in\{1,\ldots,d\}^2$. More precisely,
\begin{equation*}
\begin{split}
    &\mathbf{f}_{\pi(2,w)}(t)=\prod_{n=1}^2 \widetilde{g}_{i_n,0}(t)+\int_0^t e_{i_1}^\top E_B(t-r)A_0E_B(t-r)^\top e_{i_2}dr\\
    &\quad+\sum_{j=1}^d\int_0^t e_{i_1}^\top E_B(t-r)A_jE_B(t-r)^\top e_{i_2} \widetilde{\mathfrak{m}}^{(1)}(r,r;w_{-n,-m}^j)dr\\
    &\quad+\sum_{j,k=1}^d \int_0^t e_{i_1}^\top E_B(t-r)A_{jk}E_B(t-r)^\top e_{i_2} \mathbf{f}_{\pi(2,w_{-n,-m}^{j, k})}(r)dr.
\end{split}
\end{equation*}
This linear system of convolution equations can be solved using the resolvent of the associated (matrix) kernel. In addition, thanks to \eqref{eq:momentformula}, all the second order moments $\widetilde{\mathfrak{m}}^{(2)}$ can be expressed in terms of $\widetilde{\mathfrak{m}}^{(1)}$ and the functions $\mathbf{f}_{\pi(2,w)}$, $w\in\{1,\ldots,d\}^2$.
\end{remark}

\begin{remark}
If $X$ is an affine Volterra process, i.e. $A_{jk}=0$, then \eqref{eq:momentfmlavarconstant} provides a recursive algorithm to find the moments $\widetilde{\mathfrak{m}}^{(p)}$ of any order $p\in\mathbb{N}$. 
\end{remark}

Remark \ref{rem:linkclassicalmomentfmla} explained how the moment formula in Theorem \ref{th:momentformula} extends the moment formula from the classical to the Volterra framework. The next subsection elucidates that there is one important structural property that is common to the classical and Volterra settings.

\subsection{Moments of polynomial Volterra processes are polynomials}\label{sec:momentsarepoly}

For a polynomial diffusion $X$ starting at $X_0$, i.e. when $K\equiv \mathrm{I}_d$ and $g_0\equiv X_0$ in our framework, \cite[Theorem 3.1]{filipovic2016polynomial} gives the following explicit formula for the moments of $X$ 
\begin{equation}\label{eq:diffusionmoments} 
\mathbb{E}[p(X_t)] = H(X_0)^T\mathrm{e}^{tG}\vec{p}, \quad p\in \mathrm{Pol}_n(\mathbb{R}^d),\,t\geq 0.
\end{equation} 
In \eqref{eq:diffusionmoments}, $H(x)$ is a vector whose components are elements of a basis for $\mathrm{Pol}_n(\mathbb{R}^d)$, $G$ is the matrix of the infinitesimal generator of $X$ restricted to $\mathrm{Pol}_n(\mathbb{R}^d)$, and $\vec{p}$ are the coordinates of the polynomial $p$ with respect to the basis in $H(x)$. As a consequence, the moments of a polynomial diffusion $X$ with initial value $X_0$ are again polynomials in the variable $X_0$. More precisely, 
\begin{equation}\label{eq:uncondmomentsdiffusion}
\mathbb{E}[X_t^{\alpha}] = \sum_{\beta \in \mathbb{N}^d_0, |\beta|\leq |\alpha|}c_{\beta}(t)X_0^{\beta},
\end{equation} 
for some deterministic and time-dependent family of coefficients $\{ c_{\beta}: \beta \in \mathbb{N}^d_0, |\beta|\leq |\alpha|\}$, which can be computed explicitly from \eqref{eq:diffusionmoments}. It turns out that this structural property still holds for the moments $\mathfrak{m}^{(p)}$ in \eqref{eq:momentslift} of a polynomial Volterra process $X$ starting at $X_0$, i.e. when $g_0\equiv X_0$ in \eqref{eq_CPVP}. Furthermore, by an application of Theorem \ref{th:momentformula}, the coefficients can be obtained by solving an integral equation similar to \eqref{eq:momentformula}. Indeed, for $p= |\alpha|=0$ this trivially holds true. For $p=1$ and $w=i_1\in \{1,\dots,d\}$, we have 
\begin{equation}\label{eq:level1function}
\mathfrak{m}^{(1)}(t,T_1;w) = \mathbb{E}[X_{i_i,T_1}]-e_{i_1}^{\top}\int_{t}^{T_1}K(T_1-s)b(\mathbb{E}[X_s])ds, \quad 0 \leq t \leq T_1,
\end{equation} 
where we used \eqref{eq:altfmlalift} and the fact that $b\in \mathrm{Pol}_1(\mathbb{R}^d)$. Plugging the identity \eqref{eq:adjfwdvarconst} into \eqref{eq:level1function} yields 
\begin{align*}
\mathfrak{m}^{(1)}(t,T_1;w) & = e_{i_1}^{\top}\left (\mathrm{I}_d-\int_0^{T_1}R_B(s)ds-\int_{t}^{T_1}K(T_1-s)B\left(\mathrm{I}_d-\int_0^sR_B(u)du\right)ds \right )X_0 \\ & \qquad +e_{i_1}^{\top}\left ( \int_0^{T_1}E_B(s)ds-\int_{t}^{T_1}K(T_1-s)\left (\mathrm{I}_d+B\int_0^sE_B(u)du\right)ds \right )b_0.
\end{align*} 
This readily shows the representation of the form \eqref{eq:uncondmomentsdiffusion} for first-order moments. In the following theorem we exploit the moment-formula in Theorem \ref{th:momentformula}, to generalize this result for higher order moments. 

Before stating the theorem, we recall the notation at the beginning of Subsection \ref{sec:mainformula} and observe that for $p\in\mathbb{N}$ and $w=(i_n)_{n=1}^p\in\{1,\ldots,d\}^p$
\begin{equation}\label{eq:initialcondition}
\mathfrak{m}^{(p)}(0,T_1,\dots,T_p;w) = X_0^{\alpha(w)},
\end{equation} 
where $\alpha(w)$ is the multi-index given by $\alpha_k(w) = \#\{n: i_n=k\}$, $k=1,\ldots,d$. 
\begin{theorem}[Moments are polynomials]\label{thm:momentsarepolynomials}
Suppose that $X$ is a continuous solution to \eqref{eq_CPVP} with $g_0\equiv X_0$ and fix $T\geq 0$. Then,
for all $p\in\mathbb{N}_0$ and  all $w\in\{1,\ldots,d\}^p$, we can express the functions $\mathfrak{m}^{(p)}$ in \eqref{eq:momentslift} as 
\begin{equation} \label{eq:generalcoeffrep}
\mathfrak{m}^{(p)}(t,T_1,\dots,T_p;w) = \sum_{\beta \in \mathbb{N}_0^d, |\beta|\leq p}C^{(p)}_{\beta}(t,T_1,\dots,T_p;w)X_0^{\beta}, \quad (t,T_1,\dots,T_p) \in \mathcal{D}_T^{(p)}, \end{equation}
where the functions $C^{(p)}_{\beta}(\cdot,\cdots,\cdot;w)\in C(\mathcal{D}_T^{(p)},\mathbb{R})$ are independent of $X_0$. \\ In particular, for any $\alpha \in \mathbb{N}_0^d$, there exists a family $\{c_{\beta}\}_{\beta \in \mathbb{N}_0^{d},|\beta| \leq |\alpha|}$ of real-valued continuous functions on $[0,T]$, independent of $X_0$, such that 
\begin{equation}\label{eq:uncondmomentsVolterra}
\mathbb{E}[X_t^{\alpha}] = \sum_{\beta \in \mathbb{N}_0^d, |\beta|\leq |\alpha|}c_{\beta}(t)X_0^{\beta},\quad t\geq 0.
\end{equation} \\
Furthermore -- for any $p\in\mathbb{N}$, $w=(i_n)_{n=1}^p\in\{1,\ldots,d\}^p$, and $\beta \in \mathbb{N}_0^d$ with $|\beta|\leq p$ -- we have 
\begin{equation}\label{eq:coefficientformulas}
\begin{aligned}
    &C_{\beta}^{(p)}(t,T_1,\ldots,T_p;w)=\mathbf{f}^{(p)}_{\beta}(t,T_1,\ldots,T_p;w)\\
    &\quad+\sum_{n=1}^p\sum_{j=1}^d \int_0^t e_{i_n}^\top K(T_n-r)b_j C_{\beta}^{(p)}(r,r,(T_m)_{m\neq n};w_{-n}^j)dr\\
    &\quad +\sum_{1\leq n<m\leq p}\sum_{j,k=1}^d \int_0^t e_{i_n}^\top K(T_n-r)A_{jk}K(T_m-r)^\top e_{i_m}C_{\beta}^{(p)}(r,r,r,(T_l)_{l\neq m,n};w_{-n,-m}^{j, k})dr.
\end{aligned} 
\end{equation}
where
\begin{equation}\label{eq:coefficientformulas_initialcond}
\begin{aligned}
    &\mathbf{f}^{(p)}_{\beta}(t,T_1,\ldots,T_p;w)=1_{\{\beta = \alpha(w)\}}+\sum_{n=1}^p \int_0^t e_{i_n}^\top K(T_n-r)b_0 C_{\beta}^{(p-1)}(r,(T_m)_{m\neq n};w_{-n})dr1_{\{|\beta|<p\}}\\
    &\,+\sum_{1\leq n<m\leq p}\int_0^t e_{i_n}^\top K(T_n-r)A_0K(T_m-r)^\top e_{i_m}C_{\beta}^{(p-2)}(r,(T_l)_{l\neq m,n};w_{-n,-m})dr1_{\{|\beta|<p-1\}}\\
    &\,+\sum_{1\leq n<m\leq p}\sum_{j=1}^d\int_0^t e_{i_n}^\top K(T_n-r)A_jK(T_m-r)^\top e_{i_m}C_{\beta}^{(p-1)}(r,r,(T_l)_{l\neq m,n};w_{-n,-m}^ j)dr1_{\{|\beta|<p\}}.
\end{aligned} 
\end{equation}
\end{theorem}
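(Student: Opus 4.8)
The plan is to combine the uniqueness part of Theorem~\ref{th:momentformula} with an induction on the level $p$. The guiding observation is that, when $g_0\equiv X_0$, the initial condition of the moment formula is the monomial $X_0^{\alpha(w)}$ by \eqref{eq:initialcondition}, while the operator $\mathcal{M}_T^{(N)}$ in \eqref{eq:operatorM} has coefficients that are completely independent of $X_0$; hence the unique fixed point $\mathbf{m}$ of \eqref{eq:momentformula} should inherit a polynomial dependence on $X_0$, with degree controlled by the level. Two structural features drive everything: the source terms carrying $b_0,A_0,A_j$ pull in the strictly lower levels $\mathfrak{m}^{(p-1)},\mathfrak{m}^{(p-2)}$, whereas the $b_j$ and $A_{jk}$ terms keep the word length equal to $p$ and therefore remain \emph{self-referential} at level $p$. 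The base cases are immediate: $p=0$ gives $\mathfrak{m}^{(0)}\equiv 1$, and $p=1$ is exactly the explicit computation in \eqref{eq:level1function} and the display following it.

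For the inductive step I would assume that \eqref{eq:generalcoeffrep} holds at all levels strictly below $p$. Substituting these known polynomial representations into the right-hand side of the moment formula \eqref{eq:momentformulalong}, the terms involving $b_0$, $A_0$ and $A_j$ become, by the inductive hypothesis, polynomials in $X_0$ of degree at most $p-1$, $p-2$ and $p-1$ respectively, while the initial term $\prod_n g_{i_n,0}(T_n)=X_0^{\alpha(w)}$ contributes the monomial of degree $p$. Thus, at level $p$, equation \eqref{eq:momentformulalong} reduces to a \emph{closed} linear Volterra integral system for the family $\{\mathfrak{m}^{(p)}(\cdot;w)\}_w$, whose inhomogeneity is a polynomial in $X_0$ of degree at most $p$. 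Reading off the coefficient of each monomial $X_0^\beta$ then reproduces precisely \eqref{eq:coefficientformulas}: the indicator $1_{\{\beta=\alpha(w)\}}$ in \eqref{eq:coefficientformulas_initialcond} comes from the initial monomial (only active for $|\beta|=p$), the indicators $1_{\{|\beta|<p\}}$ and $1_{\{|\beta|<p-1\}}$ record that the lower-level source terms contribute only below the top degree, and the $b_j$, $A_{jk}$ terms — which do not alter the word length — furnish the self-referential right-hand side of \eqref{eq:coefficientformulas}.

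The hard part will be the well-posedness of the coefficient system, which I would establish via the existence, uniqueness, and continuity results of Appendix~\ref{app:inteq}: for each fixed $\beta$, the equations \eqref{eq:coefficientformulas} form a closed linear Volterra system in the finite family $\{C^{(p)}_\beta(\cdot;w):w\in\{1,\dots,d\}^p\}$, driven by the inhomogeneity $\mathbf{f}^{(p)}_\beta$ determined by the already-constructed lower-level coefficients. The key bookkeeping point to verify carefully is the preservation of level under the index operations: the words $w_{-n}^j$ and $w_{-n,-m}^{j,k}$ have the same length $p$ as $w$ (so $|\alpha(w_{-n}^j)|=|\alpha(w_{-n,-m}^{j,k})|=p$), whereas $w_{-n}$, $w_{-n,-m}^j$ and $w_{-n,-m}$ have lengths $p-1$, $p-1$ and $p-2$. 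This is exactly what guarantees both that the degree in $X_0$ never exceeds $p$ and that the system closes at level $p$ for each fixed $\beta$. Assembling $\sum_{|\beta|\le p}C^{(p)}_\beta X_0^\beta$ over all levels into a vector $\widetilde{\mathbf{m}}\in\mathcal{X}_T^{(N)}$ (boundedness follows from continuity of the $C^{(p)}_\beta$ on the compact domain $\mathcal{D}_T^{(p)}$) and noting that it solves \eqref{eq:momentformula} with initial condition \eqref{eq:initialcondition}, the uniqueness statement of Theorem~\ref{th:momentformula} forces $\widetilde{\mathbf{m}}=\mathbf{m}$, which yields \eqref{eq:generalcoeffrep} together with \eqref{eq:coefficientformulas}--\eqref{eq:coefficientformulas_initialcond}.

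Finally, the unconditional statement \eqref{eq:uncondmomentsVolterra} would follow by setting $t=T_1=\cdots=T_p$ in \eqref{eq:generalcoeffrep} and invoking \eqref{eq:moments_f_vs_X}, which gives $\mathbb{E}[X_t^{\alpha}]=\mathfrak{m}^{(p)}(t,\dots,t;w)$ for any word $w$ with $\alpha(w)=\alpha$ and $p=|\alpha|$; one then sets $c_\beta(t)=C^{(p)}_\beta(t,\dots,t;w)$, whose continuity on $[0,T]$ is inherited from the continuity of $C^{(p)}_\beta$ on $\mathcal{D}_T^{(p)}$.
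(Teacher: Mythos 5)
Your proposal is correct and follows essentially the same route as the paper's proof: induction on the level $p$ with base cases $p\in\{0,1\}$ from the explicit first-order computation, construction of the coefficients $C^{(p)}_\beta$ as the unique continuous solutions of the closed linear Volterra system \eqref{eq:coefficientformulas} via the Appendix~\ref{app:inteq} results, and identification of the assembled polynomial $\sum_{|\beta|\le q}C^{(q)}_\beta X_0^\beta$ with $\mathbf{m}$ through the uniqueness statement of Theorem~\ref{th:momentformula}, after which \eqref{eq:uncondmomentsVolterra} follows by setting $t=T_1=\cdots=T_p$. The only cosmetic difference is that you motivate the coefficient equations by ``reading off'' monomials from \eqref{eq:momentformulalong}, whereas the paper simply posits the system and verifies the assembled candidate; since you also carry out the assemble-and-compare step, the logic is identical.
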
 

\begin{remark}
Referring to the terminology used in \cite{cuchiero2021infinite}, \eqref{eq:coefficientformulas} provides a unique solution to the dual moment formula and
$(C_{\beta}^{(p)}(t,T_1,\ldots,T_p;w))_{t \in [0,T]}$ can be interpreted as deterministic dual process.
\end{remark}

\begin{proof}
Notice that, thanks to \eqref{eq:moments_f_vs_X}, \eqref{eq:uncondmomentsVolterra} directly follows from \eqref{eq:generalcoeffrep} after taking $t=T_1=\cdots=T_p$. The arguments before the statement of the theorem show that \eqref{eq:generalcoeffrep} 
holds  for $p\in \{0,1\}$. Reasoning by induction, assume that for each $1\leq q<p$ we have constructed continuous functions $C^{(q)}_{\beta}(\cdot,\ldots,\cdot;w)$ on $\mathcal{D}_T^{(q)}$ such that \eqref{eq:generalcoeffrep} holds with $p$ replaced by $q$. Corollary \ref{cor:eqncoeffs} in Appendix \ref{app:inteq} shows that, for any $\beta \in \mathbb{N}_0^d$ such that $|\beta|\leq p$, the system of equations \eqref{eq:coefficientformulas} -- seen as a system indexed over the elements $w\in\{1,\ldots,d\}^p$ -- with initial condition \eqref{eq:coefficientformulas_initialcond}, has a unique solution such that $C_{\beta}^{(p)}(\cdot,\ldots,\cdot;w)\in C(\mathcal{D}_T^{(p)})$.

It is straightforward to check that the family of functions on $\mathcal{D}_T^{(p)}$ defined as
\[
\mathfrak{n}^{(q)}(t,T_1,\dots,T_p;w)= \sum_{\beta \in \mathbb{N}_0^d, |\beta| \leq q}C^{(q)}_{\beta}(t,T_1,\dots,T_p;w)X_0^{\beta},\quad (q,w)\in\mathcal{I}^{(p)},
\]
are a continuous solution (in $\mathcal{X}_T^{(p)}$) to the moment-equation \eqref{eq:momentformula}, with $N$ replaced by $p$ and with initial conditions $X_0^{\alpha(w)}$ for each $(q,w)\in\mathcal{I}^{(p)}$. Notice that the initial conditions are the same for the moments $\mathfrak{m}^{(q)}$ by \eqref{eq:initialcondition}. Since by Theorem \ref{th:momentformula} the solution to this moment equation is unique, we conclude that $\mathbf{m}=\mathbf{n}$ and \eqref{eq:generalcoeffrep} holds.
\end{proof} 
\begin{remark}
Observe that the integral equations verified by the coefficients $C_{\beta}$ in \eqref{eq:coefficientformulas} resemble the equations satisfied by the moments in \eqref{eq:momentformula}. This structural property is well-known for classical polynomial processes. Indeed, in the classical case the coefficients of the moments in \eqref{eq:diffusionmoments} are given in terms of the exponential matrix $\mathrm{e}^{tG}$ which solves a linear ODE as well. We could have vectorized the equations for the coefficients \eqref{eq:coefficientformulas} as we did it for the moments in \eqref{eq:momentformula}. We opted, however, to not use the vectorization at this point for clarity of exposition.
\end{remark}

\subsection{Moments of the finite dimensional distributions and uniqueness in law}\label{sec:uniqueness}
In this section we explain how the previous ideas can be extended to characterize moments of the finite dimensional distributions of a weak solution to \eqref{eq_CPVP}. These considerations will allow us to prove a result concerning the uniqueness in law for solutions to \eqref{eq_CPVP}.

Fix $T\ge 0$, $0\leq t_1\leq T$ and $\alpha^1$ a multi-index. Define
\[
 \mathcal{D}_{t_1,T}^{(N)} = \{(t_2,T_1,\ldots,T_N)\in [t_1,T]^{N+1}:t_2\leq \min\{T_1,\ldots,T_N\}\}.
\]
Consider the $\mathbb{R}^{D_N}$-valued function $\mathbf{f}$ on $\mathcal{D}_{t_1,T}^{(N)}$ given by $\mathbf{f}_{\pi(0,\emptyset)}\equiv 1$ and, for $1\leq p\leq N$ and $w=(i_n)_{n=1}^{p}$,
\begin{equation*}    
    \mathbf{f}_{\pi(p,w)}(t_2,T_1,\ldots,T_N)=\mathbf{f}_{\pi(p,w)}(t_2,T_1,\ldots,T_p) = \mathbb{E}[X_{t_1}^{\alpha^1}g_{i_1,t_2}(T_1)\cdots g_{i_p,t_2}(T_p)].
\end{equation*}
Then - by the same considerations as in the proof of Theorem \ref{th:momentformula} -- $\mathbf{f}$ is a bounded function on $\mathcal{D}_{t_1,T}^{(N)}$ satisfying the integral equation 
\begin{align*}
    &\mathbf{f}_{\pi(p,w)}(t_2,T_1,\ldots,T_p) = \mathbb{E}[X_{t_1}^{\alpha^1}g_{i_1,t_1}(T_1)\cdots g_{i_p,t_1}(T_p)]\\
    &\quad+ \sum_{n=1}^p \int_{t_1}^{t_2} e_{i_n}^\top K(T_n-r)b_0 \mathbf{f}_{\pi(p-1,w_{-n})}(r,(T_m)_{m\neq n})dr\\
    &\quad+\sum_{n=1}^p\sum_{j=1}^d  \int_{t_1}^{t_2} e_{i_n}^\top K(T_n-r)b_j \mathbf{f}_{\pi(p,w_{-n}^j)}(r,r,(T_m)_{m\neq n})dr\\
    &\quad+\sum_{1\leq n<m\leq p}\int_{t_1}^{t_2}  e_{i_n}^\top K(T_n-r)A_0K(T_m-r)^\top e_{i_m}\mathbf{f}_{\pi(p-2,w_{-n,-m})}(r,(T_l)_{l\neq m,n})dr\\
    &\quad+\sum_{1\leq n<m\leq p}\sum_{j=1}^d \int_{t_1}^{t_2} e_{i_n}^\top K(T_n-r)A_jK(T_m-r)^\top e_{i_m}\mathbf{f}_{\pi(p-1,w_{-n,-m}^j)}(r,r,(T_l)_{l\neq m,n})dr\\
    &\quad+\sum_{1\leq n<m\leq p}\sum_{j,k=1}^d \int_{t_1}^{t_2} e_{i_n}^\top K(T_n-r)A_{jk}K(T_m-r)^\top e_{i_m}\mathbf{f}_{\pi(p,w_{-n,-m}^{j,k})}(r,r,r,(T_l)_{l\neq m,n})dr
\end{align*}
where $1\leq p\leq N$ and $w=(i_n)_{n=1}^{p}$. The same arguments as in Appendix \ref{app:inteq} show that, for given inputs $(K,b,a)$, this equation has a unique bounded solution with initial condition $\mathbb{E}[X_{t_1}^{\alpha^1}g_{i_1,t_1}(T_1)\cdots g_{i_p,t_1}(T_p)]$. Since we already know that this initial condition only depends on $(K,b,a)$, i.e. it is the same for any weak solution $X$ to \eqref{eq_CPVP}, we conclude that $\mathbf{f}$ only depends on $(K,b,a)$. In particular, the mixed moments of the form $\mathbb{E}[X_{t_1}^{\alpha^1}X_{t_2}^{\alpha^2}]$ are the same for any weak solution $X$ to \eqref{eq_CPVP}. 
\begin{remark}
    It is not surprising that the function $\mathbf{f}$ defined above solves an integral equation similar to \eqref{eq:momentformula}, with initial condition expressed in terms of simple moments at time $t_1$. Indeed, consider the classical case where $K\equiv \mathrm{I}_d$. Then, the tower property of the conditional expectation and the classical conditional moment formula yield
    \begin{equation*}
        \mathbb{E}[X_{t_1}^{\alpha^1}X_{t_2}^{\alpha^2}] = \mathbb{E}[X_{t_1}^{\alpha^1}H(X_{t_1})]\mathrm{e}^{(t_2-t_1)G}\vec p
    \end{equation*}
    where we use the same notation as in \eqref{eq:diffusionmoments} with $n\geq |\alpha^2|$, and where $\vec p$ are the coordinates of the monomial $x^{\alpha^2}$ with respect to the chosen basis. Observe that $\mathbb{E}[X_{t_1}^{\alpha^1}H(X_{t_1})]\mathrm{e}^{(t_2-t_1)G}$ solves a linear ODE with initial condition at $t_1$ given by $\mathbb{E}[X_{t_1}^{\alpha^1}H(X_{t_1})]$.
\end{remark}
A recursive argument, following the same lines as the above mentioned considerations, proves that all weak solutions $X$ of \eqref{eq_CPVP} have the same moments of the form
\begin{equation*}
    \mathbb{E}[X^{\alpha^1}_{t_1}\cdots X_{t_l}^{\alpha^l}]
\end{equation*}
for $l\in\mathbb{N}$, times $t_1\leq\cdots\leq t_l$ and multi-indices $\alpha^1,\ldots,\alpha^l$. Using this observation and arguing as in the proof of \cite[Lemma 4.1]{filipovic2016polynomial}, we deduce the following result.

\begin{theorem}[Uniqueness in law for polynomial Volterra processes]
\label{thm:uniqueness}
    Suppose that for any weak solution $X$ to \eqref{eq_CPVP} and for any $t\ge 0$, the law of $X_t$ is determined by its moments. Then uniqueness in law holds for \eqref{eq_CPVP}.
\end{theorem}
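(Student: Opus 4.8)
The plan is to deduce uniqueness in law from the equality of all mixed moments, which has already been established in the discussion preceding the statement: for every $l\in\mathbb{N}$, all times $t_1\le\cdots\le t_l$, and all multi-indices $\alpha^1,\ldots,\alpha^l$, the quantity $\mathbb{E}[X_{t_1}^{\alpha^1}\cdots X_{t_l}^{\alpha^l}]$ depends only on the inputs $(K,b,a)$ and not on the particular weak solution. Since any weak solution to \eqref{eq_CPVP} has continuous trajectories, its law is a Borel probability measure on the path space $C(\mathbb{R}_+,\mathbb{R}^d)$, and such a measure is uniquely determined by its finite-dimensional distributions (the Borel $\sigma$-algebra of the locally uniform topology coincides with the one generated by the coordinate evaluations). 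Hence it suffices to show that, for every finite set of times $t_1<\cdots<t_l$, the law of the random vector $(X_{t_1},\ldots,X_{t_l})$ is the same for all weak solutions.

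Next I would identify these finite-dimensional laws as probability measures on $\mathbb{R}^{dl}$ and observe that the mixed moments above are precisely the joint moments of the $\mathbb{R}^{dl}$-valued vector $(X_{t_1},\ldots,X_{t_l})$. Since these moments coincide across all weak solutions, the problem reduces to a determinacy statement: it is enough to prove that the joint law of $(X_{t_1},\ldots,X_{t_l})$ is itself determined by its moments. Granting this, any two weak solutions share the same finite-dimensional distributions, hence the same law on $C(\mathbb{R}_+,\mathbb{R}^d)$, which is the asserted uniqueness. This mirrors the structure of the argument in the proof of \cite[Lemma 4.1]{filipovic2016polynomial}.

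The main obstacle is exactly this determinacy step, since the hypothesis only supplies moment-determinacy of the single-time marginals $X_{t_j}$, whereas what is required is moment-determinacy of the higher-dimensional joint vector $(X_{t_1},\ldots,X_{t_l})\in\mathbb{R}^{dl}$. To bridge this gap I would invoke the classical result of Petersen on the multidimensional moment problem, which states that a probability measure on $\mathbb{R}^{n}$ is determined by its moments as soon as each of its one-dimensional coordinate marginals is so determined; applied here, it reduces joint determinacy to the determinacy of each scalar coordinate $X_{i,t_j}$, for $i\in\{1,\ldots,d\}$ and $j\in\{1,\ldots,l\}$. These scalar marginals must inherit determinacy from the assumed determinacy of the $d$-dimensional laws of $X_{t_j}$, and this inheritance is the delicate point in full generality. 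It is, however, immediate in the situations of primary interest here—such as the unit-ball and Jacobi processes of Theorem~\ref{T:invarianceballpoly} and Corollary~\ref{cor:jacobi}—where the state space is compact: there every $X_{t_j}$, and hence every coordinate $X_{i,t_j}$ and every joint vector $(X_{t_1},\ldots,X_{t_l})$, is compactly supported, so that moment-determinacy is automatic by the Stone–Weierstrass theorem and Petersen's result is not even needed.
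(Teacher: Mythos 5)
Your proposal is correct and takes essentially the same route as the paper: the paper's proof consists precisely of combining the agreement of all mixed moments $\mathbb{E}[X_{t_1}^{\alpha^1}\cdots X_{t_l}^{\alpha^l}]$ across weak solutions (established in the discussion preceding the statement) with the argument of \cite[Lemma 4.1]{filipovic2016polynomial}, i.e.\ the reduction to finite-dimensional distributions followed by Petersen's theorem that you describe. The delicate point you flag---passing from determinacy of the $\mathbb{R}^d$-valued laws of the $X_{t_j}$ to determinacy of the scalar coordinates $X_{i,t_j}$ required by Petersen---is inherited from that cited argument rather than introduced by you, and it is covered, exactly as you note, by the sufficient conditions singled out in the remark following Theorem~\ref{thm:uniqueness} (exponential moments, or boundedness as in Theorem~\ref{T:invarianceballpoly} and Corollary~\ref{cor:jacobi}), so your write-up is no less complete than the paper's.
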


\begin{remark}
    As explained in the proof of \cite[Lemma 4.1]{filipovic2016polynomial}, the hypothesis of Theorem \ref{thm:uniqueness} holds for instance if for any weak solution $X$ to \eqref{eq_CPVP} and $t\geq 0$, there is $\epsilon>0$ such that $\mathbb{E}[\exp(\epsilon|X_t|)]<\infty$. For example, this is the case if any such solution $X$ is bounded.
\end{remark}

\begin{remark}
    Theorem \ref{thm:uniqueness} constitutes a new result regarding uniqueness in law for solutions to stochastic Volterra equations. For affine Volterra processes \cite{abi2019affine} provide uniqueness in law via the Fourier-Laplace transform. Pathwise uniqueness for stochastic Volterra equations has been established for Lipschitz coefficients e.g., in \cite{berger1980volterra} and, for certain coefficients in the one-dimensional case, when the kernel $K$ is regular \cite{abi2019multifactor,promel2023stochastic}  and when the kernel is singular  in \cite{hamaguchi2023weak,wang2008existence}.
\end{remark}

\section{Jump representation of polynomial Volterra processes}
\label{sec:jumprep}

Let $X$ be the Volterra process introduced in \eqref{eq_CPVP} whose dynamics are given by
$$
    X_t = \lambda_0(t) + \int_0^t K(t-s)b(X_s)ds + \int_0^t K(t-s)\sigma(X_s) dW_s,\quad t \in  [0,T].
$$
In this section we consider a different lift, denoted by $\lambda$ that will provide an alternative representation of the moments of $X$, namely in terms of a dual process which is a pure jump process with killing. For this reason we also call the initial condition $\lambda_0$ instead of $g_0$.

\subsection{One-dimensional case}
For the reader's convenience we consider first the one dimensional case setting $d=1$.
Let $(\lambda_t)_{t\in[0,T]}$ be a solution (in a sense made precise in Proposition~\ref{prop:existence} below) of 
\begin{equation}\label{eqn1}
    \lambda_t(x)=\lambda_0(x)+\int_0^t \partial_x \lambda_u(x) du+\int_0^t K(x)\Big[b(\lambda_u(0))du+\sigma(\lambda_u(0)) dW_u\Big],\quad \lambda_0\equiv X_0 \in \mathbb{R},
\end{equation}
 for $x \in \mathbb{R}_+$. Similarly as in \cite[Section 5]{cuchiero2020generalized} and as argued in Remark \ref{rem:connection} below it holds that $\lambda_t(0)=X_t$ and that $\lambda$ actually corresponds to the \emph{Musiela parameterization} of the processes $(g_t(T))_{t \in [0,T]}$ considered in Section \ref{sec:Moments}.
 
As a first step we introduce an appropriate space for the function-valued process $(\lambda_t)_{t\in[0,T]}$.
As in \cite{BK:14, BK:15}, we let $\alpha:\R_+\to[1,\infty)$ be a nondecreasing $C^1$-function such that $\alpha^{-1}\in L^1(\R_+)$. The so-called Filipovi\'c space is then defined by
$$B=\{y\in AC(\R_+,\R) : \|y\|_\alpha<\infty\},$$
 where 
 $AC(\R_+,\R)$ denotes the space of absolutely continuous functions from $\R_+$ to $\R$ and 
$$\|y\|_\alpha^2=|y(0)|^2+\int_0^\infty|y'(x)|^2\alpha(x)dx.$$ 
By \cite[Lemma~3.2]{BK:14} we also know that $B\subseteq \R+C_0(\R_+)$, namely the space of bounded continuous functions with continuous continuation to infinity.
Furthermore, let $(S_t)_{t \in [0,T]}$ denote the left-shift semigroup on $B$, i.e., $S_ty = y (t + \cdot)$. Then $(S_t)_{t \in [0,T]}$ is the $C_0$-semigroup generated by the operator $\partial_x$ (see Filipovi\'c \cite[Theorem 5.1.1]{filipovic2001consistency}).

To ease technicalities and the exposition we make the following assumptions throughout this section (unless otherwise stated).

\begin{assumption}\label{ass1}
\begin{enumerate}
\item\label{iti} We assume that the kernel $K$ satisfies $K \in B$.
\item \label{itii}
Both $b$ and $\sigma$ in \eqref{eqn1} are linear maps. We thus set $b(y)=b_1y$ and $\sigma(y)^2=A_{11} y^2$ for constants $b_1, A_{11} \in \mathbb{R}$.
\end{enumerate}
\end{assumption}
With the shift semigroup at hand we can now define the notion of a mild solution to \eqref{eqn1} as in \cite[Section 7.1]{DZ:14}:

A predictable $B$-valued process $(\lambda_t)_{t \in [0,T ]}$, is said to be a \emph{mild solution} to \eqref{eqn1} if for every $t \in [0,T]$
\[
\mathbb{P}\left[\int_0^t \|\lambda_s\|_{\alpha}^2 ds < \infty\right] =1,
\]
and
\begin{align}\label{eq:eqn2}
\lambda_t=S_t\lambda_0+ \int_0^t S_{t-u}K b_1 \lambda_u(0)du +\int_0^t S_{t-u}K \sqrt{A_{11}} \lambda_u(0)dW_u, \quad \mathbb{P}\text{-a.s.}
\end{align}

\begin{proposition}[Existence and moment bounds for solutions of \eqref{eqn1}]\label{prop:existence}
Under Assumption \ref{ass1} there exists a unique mild solution to \eqref{eqn1} which has a continuous modification. Moreover, for every $k \geq 1$,
we have 
\begin{align}\label{eq:moments}
\mathbb{E}[\sup_{t \in [0,T]} \| \lambda_t\|_{\alpha}^k] \leq C_{T,k, \lambda_0},
\end{align}
where $ C_{T,k, \lambda_0}$ is a constant that depends on $T,k$ and $\lambda_0$. This solution is also an (analytically) weak solution in the sense of \cite[page 161]{DZ:14}.
\end{proposition}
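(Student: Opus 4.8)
The plan is to recast \eqref{eqn1} as an abstract linear stochastic evolution equation in the Hilbert space $B$ and to appeal to the classical theory for semilinear equations in \cite{DZ:14}. In the language of mild solutions, \eqref{eq:eqn2} is precisely the variation-of-constants formula $\lambda_t = S_t\lambda_0 + \int_0^t S_{t-u}F(\lambda_u)\,du + \int_0^t S_{t-u}G(\lambda_u)\,dW_u$ for the equation $d\lambda_t = (\partial_x\lambda_t + F(\lambda_t))\,dt + G(\lambda_t)\,dW_t$, where $F(y) = b_1\,y(0)\,K$ and $G(y) = \sqrt{A_{11}}\,y(0)\,K$, and where the initial datum is the constant function $\lambda_0\equiv X_0$. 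The latter lies in $B$ because its weak derivative vanishes, so that $\|\lambda_0\|_\alpha = |X_0|$. The operator $\partial_x$ generates the left-shift $C_0$-semigroup $(S_t)_{t\in[0,T]}$, and, as for any $C_0$-semigroup, $\sup_{t\in[0,T]}\|S_t\|_{\mathcal L(B)}<\infty$.

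First I would verify the structural hypotheses on the coefficients. The evaluation functional $\delta_0\colon y\mapsto y(0)$ is bounded on $B$ with $|y(0)|\le\|y\|_\alpha$, which is immediate from the definition of $\|\cdot\|_\alpha$. Together with $K\in B$ (part~\ref{iti} of Assumption~\ref{ass1}), this shows that $F$ and $G$ are bounded linear operators on $B$; for instance $\|F(y)-F(z)\|_\alpha \le |b_1|\,\|K\|_\alpha\,\|y-z\|_\alpha$, and analogously for $G$. Hence both coefficients are globally Lipschitz and of linear growth. Since the driving noise is one-dimensional by part~\ref{itii} of Assumption~\ref{ass1}, the diffusion map takes values in $\mathcal L_2(\mathbb R,B)\cong B$, so the Hilbert--Schmidt requirement holds trivially.

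Given these conditions, existence and uniqueness of a mild solution with a continuous modification follow from the standard fixed-point argument for globally Lipschitz coefficients, carried out in the Banach space of predictable $B$-valued processes equipped with the norm $(\mathbb E[\sup_{t\in[0,T]}\|\cdot\|_\alpha^k])^{1/k}$; see \cite[Chapter 7]{DZ:14}. The same scheme delivers the uniform moment bound \eqref{eq:moments} for $k\ge 2$ once the stochastic convolution $\int_0^t S_{t-u}G(\lambda_u)\,dW_u$ is controlled uniformly in $t$; for this I would use the factorization method of \cite[Chapter 5]{DZ:14}, which expresses the convolution as the image under a bounded deterministic operator of an auxiliary stochastically integrated process and thereby yields a maximal inequality. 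This gives $\mathbb E[\sup_{t\in[0,T]}\|\lambda_t\|_\alpha^k]\le C_{T,k,\lambda_0}$ with the constant depending on $\lambda_0$ only through $\|\lambda_0\|_\alpha=|X_0|$; the remaining range $k\in[1,2)$ then follows by Jensen's inequality. Finally, the identification of the mild solution with an analytically weak solution in the sense of \cite[page 161]{DZ:14} is the standard equivalence of the two notions for equations governed by a $C_0$-semigroup, see \cite{DZ:14}.

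I expect the only genuinely technical step to be the passage to the supremum in \eqref{eq:moments}: because of the factor $S_{t-u}$ the stochastic convolution is not a martingale in $t$, so the Burkholder--Davis--Gundy inequality does not apply directly, and it is the factorization argument that makes the uniform-in-time estimate available. Everything else is rendered routine by the linear and bounded structure of $F$ and $G$; in particular, in contrast with the state-space constructions of Section~\ref{sec:def_existence}, no regularity of the kernel beyond $K\in B$ is required, and the linearity removes any difficulty in the linear-growth control of the coefficients.
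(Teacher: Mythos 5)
Your proposal is correct and follows essentially the same route as the paper: both verify the global Lipschitz/linear-growth hypotheses for the coefficients $y\mapsto b_1 y(0)K$ and $y\mapsto \sqrt{A_{11}}\,y(0)K$ via boundedness of the evaluation functional and $K\in B$, and then invoke the Da Prato--Zabczyk existence theorem for Lipschitz semilinear equations (the paper cites \cite[Theorem 7.2 and Equation 7.7]{DZ:14} directly, whose proof is exactly the fixed-point-plus-factorization argument you outline), with the low moments $k\in[1,2)$ handled by an elementary comparison with a higher power and the weak-solution identification by the standard mild/weak equivalence.
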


\begin{proof}
We apply \cite[Theorem 7.2]{DZ:14} for the Hilbert space $H=B$ and $U_0=\mathbb{R}$. We thus need to verify the conditions of \cite[Hypotheses 7.1]{DZ:14}. As stated above $\partial_x$ generates the strongly continuous right shift semigroup $(S_t)_{t \in [0,T]}$. Moreover, all measurability criteria are satisfied. Furthermore, since the point evaluations at $0$ are bounded  linear functionals  and the fact that $K \in B$ we have
\[
\| Kb_1g(0) - Kb_1h(0) \|_{\alpha}+ \| K \sqrt{A_{11}} g(0) - K\sqrt{A_{11}} h(0)\|_\alpha \leq C \|g-h\|_{\alpha}, \quad \forall g,h \in B, 
\]
and of course also the linear growth condition. This implies the existence and uniqueness of a mild solution \eqref{eq:eqn2}. Concerning the moment estimate we apply \cite[Equation 7.7]{DZ:14} stating that for $p >2$
\[
\mathbb{E}[\sup_{t \in [0,T]} \| \lambda_t\|_{\alpha}^p] \leq \widehat{C}_{T,p} (1 + \mathbb{E}[\|\lambda_0\|_{\alpha}^p]),
\]
By the initial condition $\lambda_0\equiv X_0 \in \mathbb{R}$ we get a constant on the right hand side.
Moreover, since for $1 \leq k \leq 2$ and $p >2$
\[
\| \lambda_t\|_{\alpha}^k \leq 1 +\| \lambda_t\|_{\alpha}^p
\]
we obtain 
\[
\mathbb{E}[\sup_{t \in [0,T]} \| \lambda_t\|_{\alpha}^k]\leq 1+ \mathbb{E}[\sup_{t \in [0,T]} \| \lambda_t\|_{\alpha}^p ]\leq {C}_{T,k,\lambda_0},
\]
and we get the assertion for all $k \geq 1$.
The last assertion concerning the (analytically) weak solution follows from the same arguments as in \cite[Theorem 6.5]{DZ:14} since 
\[
\mathbb{E}\left[\int_0^T \| K\sqrt{A_{11}} \lambda_t(0)\|^2_{\alpha}\right] < \infty.
\]
\end{proof}

\begin{remark}\label{rem:connection}
Since under Assumption \ref{ass1} $(\lambda_t)_{t \in [0,T]}$ is a mild solution to \eqref{eqn1} with values in $B$
we immediately get from \eqref{eq:eqn2} by evaluating at some fixed $x$ that
\begin{align*}
\lambda_t(x)&=\lambda_0(t+x)+ \int_0^t K(t+x-u) b_1 \lambda_u(0) du + \int_0^t K(t+x-u) \sqrt{A_{11}} \lambda_u(0)dW_u.
\end{align*}
From this we see that $\lambda_t(x)=g_t(t+x)$ where the process $g$ was the defined in \eqref{eq:adjforward}, implying that $\lambda$ just corresponds to the Musiela parameterization of the processes $g$. Moreover, $(\lambda_t(0))_{t \in [0,T]}$ is the unique solution to the Volterra equation 
\[
X_t=X_0 + \int_0^t K(t-u)(b_1 X_u du + \sqrt{A_{11}} X_u dW_u),
\]
where uniqueness is a consequence of the linearity and thus Lipschitz property of the coefficients.
\end{remark}

We are now ready to state in Proposition \ref{prop:jump} the expression of the moments of the polynomial Volterra process in terms of a functional of a pure jump process with killing.
In the proof, we use the notation $\lambda^{\otimes k}(x)$ to denote the product $\lambda(x_1)\cdots \lambda(x_k)$ for $x=(x_1, \ldots, x_k) \in \mathbb{R}^k_+$.

\begin{proposition}[Jump representation of one-dimensional polynomial Volterra processes]\label{prop:jump}
Fix $k\geq 1$ and let $(Y_t)_{t\in [0,T]}$ be the $[0,T]^k$-valued process  generated by $\Gcal_k$
with domain $\mathcal{D}(\mathcal{G}_k)= AC([0,T]^k, \mathbb{R})$  given by
$$\Gcal_k f(x)=1^\top \nabla f(x)+\int (f(\xi)-f(x)) \nu(x,d\xi),$$
for
\begin{align*}
    \nu(x,\cdot)&=b_1\sum_{i=1}^kK(x_i)\delta_{(x_1,\ldots,x_{i-1},0,x_{i+1},\ldots,x_k)}\\
    &\qquad+A_{11}\sum_{i=1}^k\sum_{i<j}K(x_i)K(x_j)\delta_{(x_1,\ldots,x_{i-1},0,x_{i+1},\ldots,x_{j-1},0,x_{j+1},\ldots,x_k)},
\end{align*}
and $Y_0=(0,\ldots,0)$.
Suppose that Assumption \ref{ass1} is in force and assume that the mild solution $(\lambda_t)_{t \in [0,T]}$ to \eqref{eqn1} additionally satisfies
    \begin{equation}\label{eqnbound}
        \E[\sup_{x\in [0,T]^k}|\lambda_t'(x_1)\lambda_t(x_2)\cdots \lambda_t(x_k)|]<\infty.
    \end{equation}
    Then for each $t\in [0,T]$  it holds 
    $$\E[X_t^k]
=
X_0^k\E\left[\exp\left(\int_0^t \kappa(Y_\tau)d\tau\right) \right],$$
where 
$$\kappa(x)=b_1\sum_{i=1}^kK(x_i)+A_{11}\sum_{i=1}^k\sum_{j< i}K(x_i)K(x_j).$$
\end{proposition}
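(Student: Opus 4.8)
The plan is to obtain the claim as a specialization of a Feynman--Kac duality between the function-valued lift $\lambda$ and the pure jump process $Y$. Write $f(\lambda,x)=\lambda^{\otimes k}(x)=\lambda(x_1)\cdots\lambda(x_k)$ for $x=(x_1,\dots,x_k)\in[0,T]^k$. Since $\lambda_t(0)=X_t$ by Remark~\ref{rem:connection}, evaluating at $x=(0,\dots,0)$ gives $f(\lambda_t,(0,\dots,0))=X_t^k$, while the constant initial condition $\lambda_0\equiv X_0$ gives $f(\lambda_0,\cdot)\equiv X_0^k$. Hence the assertion follows once I establish the duality identity
\begin{equation}\label{eq:FKduality}
\mathbb{E}[f(\lambda_t,x)]=\mathbb{E}_{Y_0=x}\left[f(\lambda_0,Y_t)\exp\left(\int_0^t\kappa(Y_s)\,ds\right)\right]
\end{equation}
and set $x=(0,\dots,0)$, because the right-hand side then collapses to $X_0^k\,\mathbb{E}_{Y_0=(0,\dots,0)}[\exp(\int_0^t\kappa(Y_s)\,ds)]$.

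The core step is a generator identity linking $\mathcal{A}$, the generator of $\lambda$, and $\mathcal{G}_k$. Fixing $x$ and computing the action of $\mathcal{A}$ on the polynomial functional $f(\cdot,x)$ from the mild dynamics in Remark~\ref{rem:connection} (Itô's product rule on the factors $\lambda_t(x_i)$, with drift $K(x_i)b_1\lambda_t(0)\,dt$, covariation $d\langle\lambda_\cdot(x_i),\lambda_\cdot(x_j)\rangle_t=A_{11}K(x_i)K(x_j)\lambda_t(0)^2\,dt$, and shift contribution $\partial_x\lambda_t(x_i)$), I expect
\begin{equation}\label{eq:genid}
\mathcal{A}f(\cdot,x)(\lambda)=\mathcal{G}_k f(\lambda,\cdot)(x)+\kappa(x)\,f(\lambda,x).
\end{equation}
Indeed, the transport part $\sum_{i}\lambda'(x_i)\prod_{m\neq i}\lambda(x_m)$ is exactly $1^\top\nabla f(\lambda,\cdot)(x)$; the drift part produces $b_1\sum_i K(x_i)f(\lambda,\xi_i)$, where $\xi_i$ is $x$ with its $i$-th coordinate set to $0$ (using $\lambda(0)\prod_{m\neq i}\lambda(x_m)=f(\lambda,\xi_i)$); and the covariation part produces $A_{11}\sum_{i<j}K(x_i)K(x_j)f(\lambda,\xi_{ij})$, where $\xi_{ij}$ sets coordinates $i,j$ to $0$. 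These are precisely the gain terms $\int f(\lambda,\xi)\,\nu(x,d\xi)$, while the total mass $\nu(x,[0,T]^k)=\kappa(x)$ supplies the loss term $-\kappa(x)f(\lambda,x)$ inside $\mathcal{G}_k$; adding $\kappa(x)f$ back yields \eqref{eq:genid}.

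From \eqref{eq:genid}, identity \eqref{eq:FKduality} follows by the classical constancy argument: taking $\lambda$ and $Y$ independent and setting $\Phi(s)=\mathbb{E}[f(\lambda_s,Y_{t-s})\exp(\int_0^{t-s}\kappa(Y_r)\,dr)]$, differentiation combined with Dynkin's formula for $\lambda$ and the Feynman--Kac form of Dynkin's formula for $Y$ shows $\Phi'\equiv0$, whence $\Phi(t)=\mathbb{E}[f(\lambda_t,x)]$ equals $\Phi(0)$, the right-hand side of \eqref{eq:FKduality}. Equivalently, I may invoke the abstract duality result \cite[Lemma A.1]{cuchiero2023signature} once its hypotheses are verified, which is how I would phrase it to stay rigorous in the infinite-dimensional setting, where \eqref{eq:genid} is understood at the level of the martingale problem for the analytically weak solution of Proposition~\ref{prop:existence}.

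I expect the verification of these hypotheses, rather than the algebra in \eqref{eq:genid}, to be the main obstacle. The delicate point is that the transport term in \eqref{eq:genid} involves the point evaluation $\lambda_s'(x_i)$ of the \emph{derivative}, which is not controlled by the standard weak-solution concept; the additional integrability assumption~\eqref{eqnbound} is exactly what is needed to make the relevant expectations finite, to justify Fubini, and to promote the local martingale arising in Dynkin's formula to a true martingale. The remaining conditions are mild: since $K\in B$ is bounded on $[0,T]$, the rate $\kappa$ is bounded on $[0,T]^k$, so $\exp(\int_0^t\kappa(Y_s)\,ds)$ is bounded and every term in \eqref{eq:FKduality} is finite. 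Specializing \eqref{eq:FKduality} to $x=(0,\dots,0)$ then gives the stated formula $\mathbb{E}[X_t^k]=X_0^k\,\mathbb{E}[\exp(\int_0^t\kappa(Y_\tau)\,d\tau)]$.
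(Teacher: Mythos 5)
Your proposal is correct and follows essentially the same route as the paper: the same generator intertwining $\mathcal{A}f(\cdot,x)(\lambda)=\mathcal{G}_kf(\lambda,\cdot)(x)+\kappa(x)f(\lambda,x)$ obtained from Itô's formula on $\lambda_t^{\otimes k}(x)$, the same use of \eqref{eqnbound} to justify Fubini/Leibniz and the martingale property, and the same specialization at $x=(0,\dots,0)$. The only cosmetic difference is that the paper closes the duality in two steps --- first passing to the deterministic function $m_t(x)=\E[\lambda_t^{\otimes k}(x)]$, which solves $m_t = X_0^k+\int_0^t\mathcal M_k m_u\,du$, and then showing $\exp(\int_0^s\kappa(Y_\tau)d\tau)\,m_{t-s}(Y_s)$ is a true martingale --- rather than via your coupled function $\Phi(s)$ with $\lambda$ and $Y$ independent, but these are equivalent implementations of the same argument.
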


\begin{proof}
Set 
    $$\Mcal_k f(x)=\Gcal_k f(x)+\kappa(x) f(x),$$
for each $f\in \mathcal{D}(\Gcal_k)$. 
Observe that by Itô's formula
and  \eqref{eqn1} (since $(\lambda_t)_{t \in [0,T]}$ is also an analytically weak solution), 
we have 
$$
    \lambda_t^{\otimes k}(x)=X_0^k+\int_0^t \Mcal_k \lambda_u^{\otimes k}(x)du
    +\int_0^t \mathcal Q \lambda_u^{\otimes k}(x)dW_u
$$
for
$$\mathcal Q f(x)=\sqrt{A_{11}} \sum_{i=1}^kK(x_i) f(x_1,\ldots,x_{i-1},0,x_{i+1},\ldots,x_k).$$
Since $$\E[\sup_{t\in [0,T]}|\lambda_t(x_1)\lambda_t(x_2)\cdots \lambda_t(x_k)|]<\infty$$
due to \eqref{eq:moments} and as $K \in B$
the third term is a  true martingale and we thus have
$$\E[\lambda_t^{\otimes k}(x)]=
X_0^k+\int_0^t \E[\Mcal_k \lambda_u^{\otimes k}(x)]du.
$$
Set then $m_t(x)=\E[\lambda_t^{\otimes k}(x)]$ for each $x\in [0,T]^k$. Since \eqref{eqnbound} holds,  Leibniz rule and Fubini yield
\begin{align*}
    \Mcal_km_t(x)
    &=
1^\top \nabla \E[\lambda_t^{\otimes k}(\cdot)](x)+\int \E[\lambda_t^{\otimes k}(\xi)] \nu(x,d\xi)\\
&= \E[1^\top \nabla \lambda_t^{\otimes k}(x)]+\E\left[\int \lambda_t^{\otimes k}(\xi) \nu(x,d\xi)\right],
\end{align*}
proving that $\Mcal_km_t(x)= \E[\Mcal_k \lambda_t^{\otimes k}(x)]$ and thus that
$$m_t(x)=
X_0^k+\int_0^t \Mcal_km_u(x)du.
$$
Note that since $m_s\in \Dcal(\Gcal_k)$ for each $s\in [0,T]$, by Itô's formula we get 
$$dm_s(Y_t)=\Gcal_k m_s(Y_t) dt+
dM_t(m_s),
$$
for
$$M_t(f)=\int_0^t \int \big(f(\xi)-f(Y_t)\big) (\mu_t(dt,d\xi)-\nu(Y_t,d\xi)dt),$$ where $\mu_t$ is the jump measure corresponding to $Y$. Fixing now $t\in [0,T]$ we thus get that
\begin{align*}
    d m_{t-s}(Y_s)
    &=
-\partial_s m_{t-s}(Y_s) ds
+\Gcal_k m_{t-s}(Y_s) ds
+dM_s(m_{t-s})\\
&=
-\Mcal_k m_{t-s}(Y_s) ds
+\Gcal_k m_{t-s}(Y_s) ds
+dM_s(m_{t-s}),
\end{align*}
and similarly for $Z_s=\exp(\int_0^s \kappa(Y_\tau)d\tau) m_{t-s}(Y_s)$ we get
\begin{align*}
    dZ_s&=-\exp\left(\int_0^s \kappa(Y_\tau)d\tau\right) \Mcal_km_{t-s}(Y_s)ds\\
    &\qquad+\exp\left(\int_0^s \kappa(Y_\tau)d\tau\right) \Gcal_k m_{t-s}(Y_s)ds
    +\exp\left(\int_0^s \kappa(Y_\tau)d\tau\right)dM_s(m_{t-s})\\
    &\qquad+
    \kappa(Y_s)\exp\left(\int_0^s \kappa(Y_\tau)d\tau\right)m_{t-s}(Y_s)ds\\
    &=\exp\left(\int_0^s \kappa(Y_\tau)d\tau\right)dM_s(m_{t-s}),
\end{align*}
showing that $(Z_s)_{s\in[0,t]}$ is a local martingale.
Since $\sup_{t,x\in [0,T]}|m_t(x)|<\infty$ and $K$ is bounded we can conclude  that
$$\E\left[\exp\left(\int_0^t \kappa(Y_\tau)d\tau\right)X_0^k\right]
=\E[Z_t]
=\E[Z_0]
=m_t(0,\ldots,0)
=\E[\lambda_t^{\otimes k}(0,\ldots,0)]
=\E[X_t^k].$$
\end{proof}

\begin{example}
    Note that for $K\equiv \lambda_0\equiv 1$ we get that $X=X_0\mathcal E((b_1t+\sqrt{A_{11}} W_t)_{t\in [0,T]})$. The corresponding representation yields
    $$\E[X_t^k]
=
X_0^k\exp\Big(\int_0^t b_1k+A_{11} k(k-1)d\tau\Big)
=X_0^k\exp((b_1k+\frac 1 2 A_{11}k(k-1))t),$$
as expected.
\end{example}

\begin{remark}
Observe that Proposition \ref{prop:jump} shows that in the current homogeneous case where both $b$ and $\sigma$ are linear functions, the $k$th moment of the Volterra process is just a monomial in $X_0^k$ with coefficient
$\E[\exp(\int_0^t \kappa(Y_\tau)d\tau)]$. This is thus a special case of Theorem \ref{thm:momentsarepolynomials}.
Note that the proof of Proposition \ref{prop:jump} also shows that
\[
\mathbb{E}[\lambda_t(x_1)\cdots \lambda_t(x_k)]= \E[\lambda_t^{\otimes k}(x)]=\E_{Y_0=x} \left[\exp\left(\int_0^t \kappa(Y_\tau)d\tau\right)\lambda_0(Y_{1,t})\cdots\lambda_0(Y_{k,t})\right],
\]
so that we also get a jump representation for $\mathbf{m}(t, t+x_1, \ldots, t+x_k)$ (where we apply the notation of Section \ref{sec:mainformula}). If $\lambda_0 \equiv X_0$, we obtain again a representation that involves only $X_0^k$ and thus again a special case of Theorem~\ref{thm:momentsarepolynomials}.

Let us also explicitly draw the connection to the dual process approach outlined in the introduction via \eqref{eq:dualgen}.
The infinitesimal generator $\mathcal{A}$ of the process $\lambda$ applied to cylindrical functions $f(\lambda)=g(\langle a_1, \lambda\rangle, \ldots, \langle a_k, \lambda\rangle)$ where $g \in C^2(\mathbb{R}^k)$ and $a_1, \ldots, a_k$ are continuous linear functionals is given by
\begin{align*}
\mathcal{A}f(\lambda)&= \sum_{i=1}^k \partial_i g (\langle a_1, \lambda\rangle, \ldots, \langle a_k, \lambda\rangle) \langle\partial_x \lambda + K b_1 \lambda(0), a_i \rangle \\
& \quad + \frac{1}{2} \sum_{i,j=1}^k \partial_{ij} g(\langle a_1, \lambda\rangle, \ldots, \langle a_k, \lambda\rangle) \langle K \sqrt{A_{11}} \lambda(0), a_i \rangle \langle K \sqrt{A_{11}} \lambda(0), a_j \rangle.
\end{align*}
Letting $g(y)=\prod_{i=1}^k y_i$ and $\langle a_i, \lambda \rangle= \lambda(x_i)$, i.e.~the point evaluations for some fixed $x=(x_1, \ldots, x_k)$, we thus obtain
\begin{align*}
&\mathcal{A}\lambda(x_1)\cdots \lambda(x_k)\\
&\quad =
\sum_{i=1}^k  \lambda(x_1)\cdots \lambda(x_{i-1})\lambda(x_{i+1}) \cdots \lambda(x_k)(\partial_x \lambda(x_i) + K(x_i) b_1 \lambda(0) )\\
& \quad \quad + \frac{1}{2} \sum_{i,j=1}^k  \lambda(x_1)\cdots \lambda(x_{i-1})\lambda(x_{i+1}) \cdots  \lambda(x_{j-1})\lambda(x_{j+1}) \cdots\lambda(x_k)
K(x_i)  K(x_j) A_{11} \lambda^2(0).
\end{align*}
This is exactly the same as $\mathcal{L}$ in \eqref{eq:dualgen} with $\mathcal{L}\lambda(x_1)\cdots \lambda(x_k)=\mathcal{G}_k \lambda(x_1)\cdots \lambda(x_k) + \kappa(x) \lambda(x_1)\cdots \lambda(x_k)$
 and the process $U$ from the introduction corresponds to the process $Y$ killed at rate $\kappa$.
\end{remark}

\begin{remark}\label{rem:nonhomogen}
We illustrate now how the proposed method works without supposing the homogeneity condition of Assumption \ref{ass1}\ref{itii}. Consider coefficients $b_0,b_1,A_0,A_1,A_{11}$ such that 
    $b(x)=b_0+b_1x$
    and
    $\sigma(x)^2=A_0+A_1x+A_{11}x^2.$
    Fix $k\geq 1$ and let $(Y_t)_{t\in[0,T]}$ be the  $([0,T]\cup{\{\dag\}})^k$-valued process   generated by the linear operator $\Gcal_k$
with domain 
\begin{align*}
    \overline{\mathcal{D}(\mathcal{G}_k)}=&\{f:([0,T]\cup\{\dag\})^k\to\R \colon f\text{ is symmetric,}\\
    &\qquad f(\underbrace{\cdot,\ldots,\cdot}_{i},\underbrace{\dag,\ldots,\dag}_{k-i})|_{[0,T]^i}\in AC([0,T]^i, \mathbb{R}),\, i\in\{1,\ldots,k\},\,
    f(\dag,\ldots,\dag)=1 \}
\end{align*}
and given by 
$$\Gcal_k f(y)=1^\top \nabla f(y)+\int (f(\xi)-f(y)) \nu(y,d\xi),$$
for $\nabla_{x_i}f(\dag)=0$, $K(\dag)=0$ ,
\begin{align*}
\nu(y,\cdot)&=b_0\sum_{i=1}^kK(y_i)\delta_{(y_1,\ldots,y_{i-1},\dag,y_{i+1},\ldots,y_k)}+b_1\sum_{i=1}^kK(y_i)\delta_{(y_1,\ldots,y_{i-1},0,y_{i+1},\ldots,y_k)}\\
&\qquad+A_0\sum_{i=1}^k\sum_{j< i}K(y_i)K(y_j)\delta_{(y_1,\ldots,y_{i-1},\dag,y_{i+1},\ldots,y_{j-1},\dag,y_{j+1},\ldots,y_k)}\\
&\qquad+\frac 1 2A_1\sum_{i=1}^k\sum_{j\neq  i}K(y_i)K(y_j)\delta_{(y_1,\ldots,y_{i-1},0,y_{i+1},\ldots,y_{j-1},\dag,y_{j+1},\ldots,y_k)}\\
&\qquad+A_{11}\sum_{i=1}^k\sum_{j< i}K(y_i)K(y_j)\delta_{(y_1,\ldots,y_{i-1},0,y_{i+1},\ldots,y_{j-1},0,y_{j+1},\ldots,y_k)},
\end{align*}
 and $Y_0=(0,\ldots,0)$.
Suppose that  Assumption \ref{ass1}\ref{iti} holds true and assume that the mild solution $(\lambda_t)_{t \in [0,T]}$ to \eqref{eqn1} additionally satisfies
    $$\E[\sup_{y\in [0,T]^k}|\lambda_t'(y_1)\lambda_t(y_2)\cdots \lambda_t(y_k)|]<\infty.$$ 
    Then for each $t\in [0,T]$ it holds 
    $$\E[X_t^k]
=
\E\left[\exp\left(\int_0^t \kappa(Y_\tau)d\tau\right)X_0^{\sum_{i=1}^k1_{\{Y_{i,t}\neq \dag\}}} \right],$$
where 
$$\kappa(y)=b(1)\sum_{i=1}^kK(y_i)+\sigma(1)^2\sum_{i=1}^k\sum_{j< i}K(y_i)K(y_j).$$

Observe  that there is a direct connection between this representation and Remark~\ref{rmk:musiela}. Indeed, given a solution $(f_t)_{t\in[0,T]}$ of
$$\partial_tf_t(y)=\Gcal_kf_t(y)$$
we obtain that $ \overline{\mathbf{m}}_{\pi(p,w)}(t,x_1,\ldots,x_p)=f_t(x_1,\ldots,x_p,\dag,\ldots,\dag)$, $p=0,\ldots, k$ solves \eqref{eq:momentPDE}. Moreover, noting that
$$\E[X_t^k]=\sum_{j=0}^k \E\Big[\exp\left(\int_0^t \kappa(Y_\tau)d\tau\right)\Big|\sum_{i=1}^k1_{\{Y_{i,t}\neq \dag\}}=j\Big]\P\Big(\sum_{i=1}^k1_{\{Y_{i,t}\neq \dag\}}=j\Big)X_0^j,$$
we can also establish a direct connection with Theorem~\ref{thm:momentsarepolynomials} and in particular equation \eqref{eq:uncondmomentsVolterra}.
\end{remark}

\subsection{Multivariate case}
We move now to the $d$-dimensional setting, letting again $X$ be the process defined in \eqref{eq_CPVP}.
%
%
In this case we consider a solution $(\lambda_t)_{t\in [0,T]}$ of the SPDE given by
\begin{equation}\label{eqn2}
\begin{aligned}
\lambda_t(x, i)&= X_{0,i}+ \int_0^t \frac{d}{dx} \lambda_u(x,i) du +  \sum_{j=1}^d e_i^\top K(x)b_j  \lambda_u(0,j)  du \\& \quad 
+ \sum_{j_1,j_2=1}^d  \sqrt{ \lambda_u(0,j_1)    \lambda_u(0,j_2)  }e_i^\top K(x)\sqrt{A_{j_1j_2}} dW_u,
\end{aligned}
\end{equation}
for $x\in \R_+$ and $i\in \{1,\ldots,d\}$ and where $e_i$ denotes the canonical basis vectors. The Hilbert space for the corresponding function-valued process $(\lambda_t)_{t\in [0,T]}$ is given by
$$B_d=\{f:\mathbb{R}_+ \times \{1, \ldots d\}\to\mathbb{R}\colon f(\cdot,i)\in B\},$$
with
$$\|y\|_{\alpha,d}^2=\sum_{i=1}^d\Big (|y(0,i)|^2+\int_0^\infty|y'(x,i)|^2\alpha(x)dx\Big).$$ 

We first start by establishing the analog of Proposition \ref{prop:existence}. Even though we believe that existence of an analytically weak solution can be proved via the martingale problem approach we here just focus on the case $A_{j_1 j_2}=0$ whenever $j_1 \neq j_2$ so that we only have  linear 
terms in the diffusion part and can apply similar arguments as in Proposition \ref{prop:existence}. 

For completeness we recall the notion of a mild solution in the current setting when $A_{j_1 j_2}=0$ for $j_1 \neq j_2$.
A predictable $B_d$-valued process $(\lambda_t)_{t \in [0,T ]}$, is said to be a \emph{mild solution} to \eqref{eqn2} if for every $t \in [0,T]$
\[
\mathbb{P}\left[\int_0^t \|\lambda_s\|_{\alpha,d}^2 ds < \infty\right] =1,
\]
and for every $i \in \{1, \ldots, d\}$ 
\begin{equation}\label{eq:eqn3}
\begin{aligned}
\lambda_t(\cdot,i)&= \underbrace{S_t\lambda_0(\cdot,i)}_{\equiv X_{0,i}}+  \sum_{j=1}^d S_{t-u}e_i^\top Kb_j  \lambda_u(0,j)  du 
+ \sum_{j=1}^d  S_{t-u}  e_i^\top K\sqrt{A_{jj}} \lambda_u(0,j)  dW_u, \quad \mathbb{P}\text{-a.s.}
\end{aligned}
\end{equation}

  Throughout this section we shall assume the following condition on $K$.

\begin{assumption}\label{ass2}
We assume that $K$ satisfies $K_{ij} \in B$ for all $i,j \in \{1, \ldots, d\} 
$.
\end{assumption}

\begin{proposition}[Existence and moment bounds for solutions of \eqref{eqn2}]\label{prop:existence1}
Let Assumption~\ref{ass2} hold true and 
suppose that $A_{j_1 j_2}=0$ for $j_1 \neq j_2$.
Then there exists a unique mild solution to \eqref{eqn2} which has a continuous modification. Moreover, for every $\p \geq 1$,
we have 
\begin{align}\label{eq:moments2}
\mathbb{E}[\sup_{t \in [0,T]} \| \lambda_t\|_{\alpha,d}^\p ] \leq C_{T,\p , X_0},
\end{align}
where $ C_{T,\p , X_0}$ is a constant that depends on $T, \p $ and $X_0$. This solution is also an (analytically) weak solution in the sense of \cite[page 161]{DZ:14}.
\end{proposition}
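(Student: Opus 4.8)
The plan is to mirror the proof of Proposition~\ref{prop:existence} verbatim, now working in the Hilbert space $H = B_d$ and with driving-noise space $U_0 = \mathbb{R}^d$ (matching the $d$-dimensional Brownian motion $W$), and to apply \cite[Theorem 7.2]{DZ:14}. First I would record that the left-shift semigroup $(S_t)_{t\in[0,T]}$ acts on $B_d$ componentwise and is the $C_0$-semigroup generated by $\partial_x$ (applied in each coordinate $i\in\{1,\ldots,d\}$); this follows coordinatewise from \cite[Theorem 5.1.1]{filipovic2001consistency} together with the fact that the norm $\|\cdot\|_{\alpha,d}$ decouples over the $d$ coordinates.

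Next I would verify the remaining conditions of \cite[Hypotheses 7.1]{DZ:14}, namely measurability together with global Lipschitz continuity and linear growth of the drift and diffusion coefficients. The key observation is that the restriction $A_{j_1 j_2}=0$ for $j_1\neq j_2$ linearizes the diffusion term: the square-root coupling $\sqrt{\lambda_u(0,j_1)\lambda_u(0,j_2)}$ collapses to a single factor $\lambda_u(0,j)$, so that both the drift map $g\mapsto\big(\sum_{j=1}^d e_i^\top K(\cdot)b_j\,g(0,j)\big)_{i=1}^d$ and the diffusion map $g\mapsto\big(\sum_{j=1}^d e_i^\top K(\cdot)\sqrt{A_{jj}}\,g(0,j)\big)_{i=1}^d$ depend linearly on $g$. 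Since the point evaluations $g\mapsto g(0,j)$ are bounded linear functionals on $B_d$ and each $K_{ij}\in B$ by Assumption~\ref{ass2}, one obtains an estimate of the form
\[
\Big\| \sum_{j=1}^d e_i^\top K b_j\, (g(0,j)-h(0,j)) \Big\|_{\alpha,d} + \Big\| \sum_{j=1}^d e_i^\top K \sqrt{A_{jj}}\,(g(0,j)-h(0,j)) \Big\|_{\alpha,d} \leq C\, \|g-h\|_{\alpha,d}
\]
for all $g,h\in B_d$, and the linear growth bound follows in the same manner. This yields existence and uniqueness of a mild solution \eqref{eq:eqn3} possessing a continuous modification.

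For the moment estimate I would invoke \cite[Equation 7.7]{DZ:14}, which gives, for $p>2$,
\[
\mathbb{E}[\sup_{t\in[0,T]}\|\lambda_t\|_{\alpha,d}^p] \leq \widehat{C}_{T,p}(1+\mathbb{E}[\|\lambda_0\|_{\alpha,d}^p]),
\]
and, since $\lambda_0(\cdot,i)\equiv X_{0,i}$ is deterministic, the right-hand side is a constant depending only on $T,p,X_0$. Exactly as in Proposition~\ref{prop:existence}, the bound for $1\leq \p\leq 2$ follows from $\|\lambda_t\|_{\alpha,d}^\p\leq 1+\|\lambda_t\|_{\alpha,d}^p$, giving \eqref{eq:moments2} for all $\p\geq 1$. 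Finally, the identification with an analytically weak solution follows from the argument of \cite[Theorem 6.5]{DZ:14}, for which it suffices to check $\mathbb{E}\big[\int_0^T \|\sum_{j=1}^d e_i^\top K\sqrt{A_{jj}}\,\lambda_t(0,j)\|_{\alpha,d}^2\,dt\big]<\infty$; this is immediate from \eqref{eq:moments2} and $K_{ij}\in B$.

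I do not anticipate a genuine obstacle here: the whole point of assuming $A_{j_1 j_2}=0$ off the diagonal is precisely to keep the diffusion coefficient linear in $\lambda$, so that the contraction-type hypotheses of \cite[Theorem 7.2]{DZ:14} are met; the general square-root coupling would fail to be Lipschitz and is explicitly excluded. The only place deserving care is the bookkeeping of the coordinate structure of $B_d$ and $U_0=\mathbb{R}^d$ in the Lipschitz estimate, since the diffusion coefficient is now operator-valued (mapping $\mathbb{R}^d$ into $B_d$); but this is routine once one notes that each entry $K_{ij}$ lies in $B$ and that $\|\cdot\|_{\alpha,d}$ decouples over coordinates.
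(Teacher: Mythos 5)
Your proposal is correct and follows exactly the paper's own route: an application of \cite[Theorem 7.2]{DZ:14} with $H=B_d$ and $U_0=\mathbb{R}^d$, verification of \cite[Hypotheses 7.1]{DZ:14} via the linearity of the coefficients under the diagonal assumption on $A$, the moment bound from \cite[Equation 7.7]{DZ:14}, and the weak-solution identification as in \cite[Theorem 6.5]{DZ:14}. The paper simply states that the verification is ``completely analogous to the one dimensional case,'' whereas you have spelled out the details.
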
  

\begin{proof}
We again apply Theorem \cite[Theorem 7.2]{DZ:14} for the Hilbert space $H=B_d$ and $U_0=\mathbb{R}^d$.
Verifying the conditions of \cite[Hypotheses 7.1]{DZ:14} is completely analogous to the one dimensional case and we obtain existence and uniqueness of a mild solution to \eqref{eq:eqn3}. The moments estimates and the assertion concerning the weak solution also follow analogously.
\end{proof}

In the next proposition we establish the jump representation in the multivariate case. Here, we also allow for $A_{j_1 j_2}\neq 0$ when $j_1 \neq j_2$. Note  that all the assumptions in Proposition \ref{prop:jumpmulti} except of \eqref{eq:derivativeass} are satisfied under the conditions of Proposition \ref{prop:existence1}.

\begin{proposition}[Jump representation of $d$-dimensional polynomial Volterra processes]\label{prop:jumpmulti}
Fix $k\geq1$ and let $(Y_t)_{t\in [0,T]}$ be the $([0,T]\times \{1, \ldots, d\})^\p $-valued process   generated by
$\mathcal{G}_\p $ with domain $\mathcal{D}(\mathcal{G}_\p )=AC(([0,T]\times\{1, \ldots,d\})^\p , \mathbb{R})$ given by 
\begin{align*}
\Gcal_\p  f(x,i)=1^\top\nabla_x f(x,i)+ \int (f(  \xi, \ell)- f(x,i)) \nu((x,i), d(\xi, \ell)),
\end{align*}
for
\begin{align*}
\nu((x,i), \cdot)&=\sum_{m=1}^\p  \sum_{j=1}^{d} e_{i_m}^\top K(x_m) b_{j} \delta_{\{((x_1,i_1),\ldots,(0,j), \ldots,(x_\p ,i_\p ))\}}\\
&\quad +  \sum_{m=1}^\p \sum_{n<m} \sum_{j_1,j_2=1}^{d}  e_{i_m}^\top K(x_m) A_{j_1j_2}K(x_n)^{\top}e_{i_n} \delta_{\{((x_1,i_1),\ldots,(0,j_1), \ldots,(0,j_2), \ldots,(x_\p ,i_\p ))\}}
\end{align*}
and 
$$Y_0=(\underbrace{(0,1)\ldots,(0,1)}_{\p _1 \text {-times}},\underbrace{(0,2), \ldots, (0,2)}_{\p _2 \text{-times}}, \ldots, \underbrace{(0,d), \ldots, (0,d)}_{\p _d \text{-times}}).$$
Suppose that Assumption \ref{ass2} holds true and assume that $(\lambda_t)_{t\in[0,T]}$ is a $B_d$-valued (analytically) weak solution to \eqref{eqn2} satisfying 
    \begin{align}
&\E[\sup_{t\in [0,T],i}|\lambda_t(x_1,i_1)\cdots \lambda_t(x_\p ,i_\p )|]<\infty \quad \text{and}\notag\\
   & \E[\sup_{x\in[0,T]^\p ,i}|(\partial_x \lambda_t(x_1,i_1))\lambda_t(x_2,i_2)\cdots \lambda_t(x_\p ,i_\p )|]<\infty.\label{eq:derivativeass}
    \end{align}
    Then for each  $t\in [0,T]$ and each multi-index $\mathbf{\p }=(\p _1, \ldots \p _d)$  with $|\mathbf{k}| = k$, it holds
    \begin{align*}
\E[X_t^{\mathbf{\p }}]&=\E[X_{t,1}^{\p _1}\cdots X_{t,d}^{\p _d}]
=X_0^{\mathbf{\p }}
\E\left[\exp\left(\int_0^t \kappa(Y_\tau)d\tau\right) \right],
\end{align*} where
\begin{align*}
\kappa(x,i)&=\sum_{m=1}^\p  \sum_{j=1}^{d} e_{i_m}^\top K(x_m) b_{j}  +  \sum_{m=1}^\p \sum_{n<m} \sum_{j_1,j_2=1}^{d}  e_{i_m}^\top K(x_m) A_{j_1j_2}K(x_n)^{\top}e_{i_n}.
\end{align*}
\end{proposition}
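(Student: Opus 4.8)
The plan is to transpose, essentially verbatim, the four-step argument used for the one-dimensional Proposition~\ref{prop:jump} to the $d$-dimensional state space $([0,T]\times\{1,\ldots,d\})^k$. Writing $F_u(x,i):=\prod_{m=1}^k\lambda_u(x_m,i_m)$ and $\Mcal_k f:=\Gcal_k f+\kappa f$, I would first apply It\^o's formula to $F$, using that $\lambda$ is an analytically weak solution to \eqref{eqn2}, to obtain a decomposition
\begin{equation*}
F_t(x,i)=X_0^{\mathbf{k}}+\int_0^t\Mcal_kF_u(x,i)\,du+\int_0^t\mathcal{Q}F_u(x,i)\,dW_u,
\end{equation*}
where $\mathcal{Q}$ gathers the diffusion coefficients coming from the last line of \eqref{eqn2}. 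The moment bound \eqref{eq:moments2} together with $K\in B$ (Assumption~\ref{ass2}) ensures that the stochastic integral is a true martingale, so that $m_t(x,i):=\E[F_t(x,i)]$ satisfies $m_t=X_0^{\mathbf{k}}+\int_0^t\E[\Mcal_kF_u]\,du$; interchanging $\E$ and $\Mcal_k$ through Leibniz' rule and Fubini --- which is exactly what the extra integrability hypothesis \eqref{eq:derivativeass} is there to license --- then yields the forward equation $m_t=X_0^{\mathbf{k}}+\int_0^t\Mcal_km_u\,du$.

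The step I expect to be the main obstacle is the identification of the It\^o drift of $F$ with $\Mcal_kF$. The product rule produces two groups of terms: the first-order terms $\sum_m\big(\prod_{n\neq m}\lambda_u(x_n,i_n)\big)\,d\lambda_u(x_m,i_m)$ and the covariation terms $\sum_{n<m}\big(\prod_{l\neq m,n}\lambda_u(x_l,i_l)\big)\,d\langle\lambda(x_m,i_m),\lambda(x_n,i_n)\rangle_u$. The transport parts $\partial_x\lambda_u(x_m,i_m)$ assemble into $1^\top\nabla_xF$; the linear drift $\sum_j e_{i_m}^\top K(x_m)b_j\lambda_u(0,j)$ turns the factor $\lambda_u(0,j)$ into an evaluation of $F$ at the configuration with the $m$-th site replaced by $(0,j)$, matching the first sum in $\nu$; and the covariation, which must be computed to equal $\sum_{j_1,j_2}e_{i_m}^\top K(x_m)A_{j_1j_2}K(x_n)^\top e_{i_n}\lambda_u(0,j_1)\lambda_u(0,j_2)\,du$, produces an evaluation of $F$ at the configuration with sites $m,n$ replaced by $(0,j_1),(0,j_2)$, matching the second sum in $\nu$. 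In each case the killing rate $\kappa$ is designed precisely to absorb the $-F(x,i)$ contributions of the jump terms, so that $\Gcal_kF+\kappa F$ reproduces the drift exactly. The delicate point here is the careful bookkeeping of the multivariate covariation --- in particular checking that the diffusion specification in \eqref{eqn2} does yield the clean bilinear form in $A_{j_1j_2}$ above, including the off-diagonal entries --- together with the symmetrisation over the unordered pair $\{m,n\}$.

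It then remains to run the dual martingale argument exactly as for Proposition~\ref{prop:jump}. Since $m_s\in\Dcal(\Gcal_k)$ for each $s$, Dynkin's formula for the jump process $Y$ gives $dm_{t-s}(Y_s)=\big(-\partial_s m_{t-s}+\Gcal_k m_{t-s}\big)(Y_s)\,ds+dM_s(m_{t-s})$, with $M$ the compensated jump martingale, and substituting the forward equation $\partial_s m_{t-s}=\Mcal_k m_{t-s}=(\Gcal_k+\kappa)m_{t-s}$ I would check that
\begin{equation*}
Z_s:=\exp\Big(\int_0^s\kappa(Y_\tau)\,d\tau\Big)m_{t-s}(Y_s)
\end{equation*}
has vanishing drift, hence is a local martingale; boundedness of $m$ (from $\sup|m|<\infty$) and of $\kappa$ (since $K$ is bounded on $[0,T]$) upgrade it to a true martingale. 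Evaluating $\E[Z_0]=\E[Z_t]$ then gives the claim: $Z_0=m_t(Y_0)=\E[X_{t,1}^{k_1}\cdots X_{t,d}^{k_d}]$ by the choice of $Y_0$ and the identity $\lambda_t(0,i)=X_{t,i}$, while $Z_t=\exp\big(\int_0^t\kappa(Y_\tau)\,d\tau\big)\,m_0(Y_t)$ is evaluated using the constant initial curve $\lambda_0(\cdot,i)\equiv X_{0,i}$, which identifies the remaining factor with the monomial in the initial condition and yields the stated representation.
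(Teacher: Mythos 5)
Your proposal reproduces the paper's proof of Proposition~\ref{prop:jumpmulti} essentially step for step: It\^o's formula applied to the tensor product $\lambda_t^{\otimes k}(x,i)$ using that $\lambda$ is an analytically weak solution of \eqref{eqn2}, identification of the drift with $\mathcal{M}_k=\mathcal{G}_k+\kappa$, the true-martingale property of the stochastic integral plus the Leibniz--Fubini interchange licensed by \eqref{eq:derivativeass} to obtain the forward equation for $m_t=\E[\lambda_t^{\otimes k}]$, and finally the killed-jump-process martingale $Z_s=\exp\left(\int_0^s\kappa(Y_\tau)\,d\tau\right)m_{t-s}(Y_s)$ evaluated at $s=0$ and $s=t$. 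The one delicate point in your closing step --- identifying $m_0(Y_t)=\prod_{m}X_{0,I_{m,t}}$ with the monomial $X_0^{\mathbf{k}}$ even though the index components of $Y$ may change under the jumps when the $b_j$ and $A_{j_1j_2}$ are not index-diagonal --- is exactly the same identification the paper itself makes, so your argument diverges from the paper's nowhere.
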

\begin{proof}
    Set 
    $$\Mcal_\p  f(x,i)=\Gcal_\p  f(x,i)+\kappa(x,i) f(x,i),$$
for each  $f\in \Dcal(\Gcal_\p )$ and
observe that by Itô's formula  it holds 
$$
    \lambda_t^{\otimes \p }(x,i)=X_{0,i_1}\cdots X_{0,i_\p }+\int_0^t \Mcal_\p  \lambda_u^{\otimes \p }(x,i)du
    +\int_0^t \mathcal Q \lambda_u^{\otimes \p }(x,i)dW_u.
$$
for
\begin{align*}
    \mathcal Q f(x,i)
    &=
\sum_{m=1}^\p 
\sum_{j_1,j_2=1}^d  e_{i_m}^\top K(x_m)\sqrt{A_{j_1j_2}}\\
&\qquad\times\sqrt{ f((x_1,i_1),\ldots,(0,j_1),\ldots, (x_d,i_d))   f((x_1,i_1),\ldots,(0,j_2),\ldots,(x_d,i_d))}. 
\end{align*}
Since the third term is a  true martingale, we have
$$\E[\lambda_t^{\otimes \p }(x,i)]=
X_{0,i_1}\cdots X_{0,i_\p }+\int_0^t \E[\Mcal_\p  \lambda_u^{\otimes \p }(x,i)]du.
$$
Set then $m_t(x,i)=\E[\lambda_t^{\otimes \p }(x,i)]$. Since \eqref{eq:derivativeass} holds, Leibniz rule and Fubini yield
\begin{align*}
    \Mcal_\p m_t(x,i)&=
1^\top \nabla_x \E[\lambda_t^{\otimes \p }(\cdot,i)](x)+\int \E[\lambda_t^{\otimes \p }(\xi,\ell)] \nu((x,i),d(\xi,\ell))\\
&= \E[1^\top \nabla_x \lambda_t^{\otimes \p }(x,i)]+\E\left[\int \lambda_t^{\otimes \p }(\xi,\ell) \nu((x,i),d(\xi,\ell))\right],
\end{align*}
proving that $\Mcal_\p m_t(x,i)= \E[\Mcal_\p  \lambda_t^{\otimes \p }(x,i)]$ and thus that
$$m_t(x,i)=
X_{0,i_1}\cdots X_{0,i_\p }+\int_0^t \Mcal_\p m_u(x,i)du.
$$
Since  $m_s$ lies in the domain of $\Gcal_\p $ for each $s\in [0,T]$ proceeding as in the one dimensional case we get that the process $(Z_s)_{s\in [0,t]}$ for 
$$Z_s=\exp\left(\int_0^s \kappa(Y_\tau)d\tau\right) m_{t-s}(Y_s)$$
is a true martingale and thus  that 
$$X_0^{\mathbf{\p }}
\E\left[\exp\left(\int_0^t \kappa(Y_\tau)d\tau\right) \right]
=m_t(Y_0)
=\E[\lambda_t^{\otimes \p }(Y_0)]
=\E[X_t^{\mathbf \p }].$$
\end{proof}

\begin{example}
    Observe that for 
    $K\equiv \mathrm{I}_d$, $\lambda_0\equiv (1,\ldots,1)$, $b_i=\beta_ie_i$, $A_{ij}= \sigma_i^\top\sigma_je_ie_j^\top$ for some $\beta_i\in\R$, $\sigma_i\in \R^d$ we get that
    $$X_{i}=\mathcal E((\beta_it+\sum_{j=1}^d \sigma_{ij}W_t^j)_{t\in[0,T]}).$$
    In this setting 
    \begin{align*}
\nu((x,i), \cdot)&=\sum_{m=1}^\p  \sum_{j=1}^{d} e_{i_m}^\top  \beta_je_j \delta_{\{\ldots,(0,j), \ldots\}}
+ \sum_{m=1}^\p \sum_{n<m} \sum_{j_1,j_2=1}^{d}  \sigma_{j_1}^\top\sigma_{j_2}e_{i_m}^\top e_{j_1}e_{j_2}^\top e_{i_n} \delta_{\{\ldots,(0,j_1), \ldots,(0,j_2), \ldots\}}\\
&=\sum_{m=1}^\p    \beta_{i_m} \delta_{\{\ldots,(0,i_m), \ldots\}}
+ \sum_{m=1}^\p \sum_{n<m} \sigma_{i_m}^\top\sigma_{i_n} \delta_{\{\ldots,(0,i_m), \ldots,(0,i_n), \ldots\}},
\end{align*}
showing that the second component of each $Y_i$ is constant over time. Since
$$\kappa(x,i)=\sum_{m=1}^\p    \beta_{i_m} 
+ \sum_{m=1}^\p \sum_{n<m} \sigma_{i_m}^\top\sigma_{i_n}
=\sum_{m=1}^\p    \beta_{i_m} 
+ \frac 12\sum_{m,n=1}^\p  \sigma_{i_m}^\top\sigma_{i_n}-\sum_{m=1}^\p \sigma_{i_m}^\top\sigma_{i_m},$$
    the corresponding representation yields
        $$\E[X_t^\mathbf{\p }|\lambda_0]
=\exp\Big(\Big(\sum_{j=1}^d\p _j\beta_j+\frac 1 2\sum_{j_1, j_2=1}^d \p _{j_1}\p _{j_2}\sigma_{j_1}^\top\sigma_{j_2}-\sum_{j=1}^d\p _j\sigma_j^\top\sigma_j\Big)t\Big),$$
as expected.
\end{example}

\begin{remark}
Note that in analogy to Remark \ref{rem:nonhomogen} also in the multivariate setting the non-homogeneous case can be treated. This can be done by adding zero indices in the definition of $\kappa$ and $\nu$, i.e.~including $b_0$, and $A_{0i}=A_{i0}=A_i$ for $i=0, 1, \ldots, d$, and changing the corresponding jumps
to $(\dag, 0)$ (instead of $(0,j)$ as it is the case for $j=1, \ldots, d$).
\end{remark}

\section{Volterra processes in the unit ball}
\label{sec:unitball}
In this section, we are interested in constructing  general Volterra processes \eqref{eq_CPVP},
that remain in the unit ball $\mathcal B$ of $\R^d$.
To achieve this, structural conditions on the coefficients $b: \R^d \rightarrow \R^d, \sigma: \R^d \rightarrow \R^{d\times d}$ and the kernel $K \in L_{\text{loc}}^2$ are needed. In a first step, we consider more general continuous coefficients  with linear growth \begin{equation}\label{eq:lineargrowth}
    |b(x)|\lor |\sigma(x)| \leq c(1+|x|), \quad  x \in \R^d.
\end{equation} 
It follows from \cite[Theorem 3.4]{abi2019affine} that under Assumption \ref{ass:KcontinL2} and \eqref{eq:lineargrowth},  there exists a weak solution to \eqref{eq_CPVP} with values in $\mathbb R^d$. For the rest of this section, we will assume that the kernel $K$ is scalar $K:\mathbb{R}_+ \rightarrow \mathbb{R}$.

 The next theorem provides the existence of a $\mathcal B$-valued solution to the stochastic Volterra equation \eqref{eq_CPVP} under structural conditions on the coefficients $(b,\sigma)$ and the kernel $K$.   We denote by $\partial \mathcal B$ the boundary of $\mathcal B$, that is 
$$ \partial \mathcal B   = \{ x \in \R^d: x^\top x = 1  \}.$$

\begin{theorem}[Existence of Volterra processes in the unit ball]\label{T:invarianceball} Fix a scalar kernel $K:[0,T] \to \mathbb R$ that satisfies Assumptions \ref{ass:KcontinL2} and \ref{ass:K orthant}.
    Assume that $b$ and $\sigma$ are continuous, with linear growth \eqref{eq:lineargrowth} and
    \begin{align}\label{eq:ballcoeff}
         x^\top b(x) \leq 0 \text{ and } 
        \sigma(x) = 0, \quad  x \in \partial \mathcal B.  
    \end{align}
    For any $X_0 \in \mathcal B$, there exists a continuous weak solution $X$ to \eqref{eq_CPVP} such that $X_t \in \mathcal B$ a.s. for every $t\in [0,T]$.
\end{theorem}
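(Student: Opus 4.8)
The plan is to obtain the $\mathcal B$-valued solution by a double approximation---regularizing the possibly singular kernel and the coefficients---proving invariance for each regularized equation by a genuinely Volterra-specific argument, and then passing to the limit. First I would approximate $K$ by bounded smooth kernels $K_n$ that remain nonnegative and non-increasing with $\dot K_n\le0$ (for instance by shifting, $K(\cdot+1/n)$, and mollifying), whose resolvents of the first kind $L_n$ can be arranged to still satisfy Assumption~\ref{ass:K orthant}, with $K_n\to K$ in $L^2_{\mathrm{loc}}$. I would also replace $(b,\sigma)$ by globally Lipschitz coefficients $(b_m,\sigma_m)$ of linear growth that agree with $(b,\sigma)$ on $\mathcal B$, keep $\sigma_m=0$ on $\partial\mathcal B$, and strengthen \eqref{eq:ballcoeff} to a strictly inward drift $x^\top b_m(x)\le-\delta_m<0$ on $\partial\mathcal B$ via a vanishing perturbation such as $b_m=b-\tfrac1m\,\mathrm{id}$. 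For each $(n,m)$, weak existence of a continuous $\mathbb{R}^d$-valued solution $X^{n,m}$ follows from \cite[Theorem~3.4]{abi2019affine} and the existence theorem recalled above, and since $K_n$ is regular, $X^{n,m}$ is a continuous semimartingale; here $g_0\equiv X_0$ as in the statement.

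The heart of the proof is the invariance $X^{n,m}_t\in\mathcal B$, which I would establish through the squared-norm process $N_t=\|X^{n,m}_t\|^2$. Because the kernel is now regular one has $dX^{n,m}_t=\bigl(H^{n,m}_t+K_n(0)b_m(X^{n,m}_t)\bigr)dt+K_n(0)\sigma_m(X^{n,m}_t)\,dW_t$, where $H^{n,m}_t=\int_0^t \dot K_n(t-s)\,dZ^{n,m}_s$ is a memory term, so that It\^o's formula gives a drift for $N$ equal to $2X^{n,m}_t{}^\top H^{n,m}_t+2K_n(0)\,X^{n,m}_t{}^\top b_m(X^{n,m}_t)+K_n(0)^2\,\mathrm{Tr}\,\bigl(\sigma_m\sigma_m^\top(X^{n,m}_t)\bigr)$. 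At the first time $X^{n,m}$ reaches $\partial\mathcal B$ the last two contributions are harmless: $\sigma_m=0$ there kills both the martingale part and the trace term, and $x^\top b_m(x)<0$ makes the $b_m$-term strictly negative. The genuinely new difficulty is the memory term $2X^{n,m}_t{}^\top H^{n,m}_t$: unlike in the diffusion case the dynamics of $X$ carry the full history through the convolution, and this term encodes the curvature of $\partial\mathcal B$. I would control it by a deterministic comparison/maximum principle for the linear convolution equation satisfied by $1-N$, exploiting the resolvent representation $Z^{n,m}=L_n*(X^{n,m}-g_0)$ together with the fact that $K_n$ and $L_n$ are both nonnegative \emph{and} non-increasing---it is precisely this monotonicity (and not merely the sign used for the orthant in \cite{abi2019affine}) that prevents the memory from pushing $N$ above $1$. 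This positivity-preservation statement is the stochastic invariance principle announced in the introduction and is the main obstacle.

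Granted invariance of the approximants, I would conclude by a compactness argument. The moment and $\alpha$-H\"older estimates available under Assumption~\ref{ass:KcontinL2} (Proposition~\ref{prop:estimatesmoments} and the existence theorem) give tightness of the laws of $(X^{n,m})$ in $C([0,T],\mathbb{R}^d)$ uniformly in $(n,m)$ by Prokhorov's theorem; passing to a weakly convergent subsequence and using the stability of the integrated form \eqref{eq:CPVP_int} under the convergences $K_n\to K$ and $(b_m,\sigma_m)\to(b,\sigma)$---as in the stability analysis of \cite[Section~3]{abi2021weak}---identifies any limit point $X$ as a weak solution of \eqref{eq_CPVP} with the original data $(K,b,\sigma)$. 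Since $\mathcal B$ is closed and each $X^{n,m}_t\in\mathcal B$ a.s., the invariance survives in the limit and $X_t\in\mathcal B$ a.s. for every $t\in[0,T]$. The crucial point to execute carefully is that the curvature control of the second step be uniform in the regularization, so that the singular memory term does not overwhelm the inward drift as $n\to\infty$.
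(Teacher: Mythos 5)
Your outer structure (approximate the coefficients, prove invariance for the approximants, pass to the limit by tightness and stability of the integrated form) matches the paper, but the central step --- the invariance of the approximating solutions --- is not actually proved, and the route you sketch for it would not go through as stated. You propose to regularize the kernel, apply It\^o's formula to $N_t=\|X_t\|^2$, and then control the memory term $2X_t^\top\!\int_0^t\dot K_n(t-s)\,dZ_s$ via ``a comparison/maximum principle for the linear convolution equation satisfied by $1-N$.'' No such closed equation exists: $1-\|X_t\|^2$ is quadratic in $X$ and the memory term couples $X_t$ with the whole past of the increments $dZ_s$, so it cannot be rewritten as a linear Volterra functional of $(1-\|X_s\|^2)_{s\le t}$. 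You yourself flag this as ``the main obstacle,'' which is precisely the content of the theorem; leaving it open means the proof is missing its key idea. A secondary problem is that requiring $\sigma_m=0$ only \emph{on} $\partial\mathcal B$ and $x^\top b_m(x)<0$ only \emph{at} the hitting time is not enough even in the Markovian case --- one needs these conditions on a full shell around the boundary so that, by continuity of the path, they persist for a short random time after the hitting time.

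The paper resolves the obstacle without regularizing the kernel at all. It modifies only the coefficients, rescaling so that $\sigma^n$ vanishes on the entire shell $\{|x|>1-1/n\}$ and $x^\top b^n(y)\le 0$ for all pairs $x,y$ in that shell with $|x-y|<\epsilon$. Then, at the exit time $\tau$ from $\mathcal B$, it uses the resolvent of the first kind $L$ to write $X_{\tau+h}=a_0(h)+a_1(h)+a_2(h)$, where $a_2$ (the post-$\tau$ stochastic integral) vanishes on the relevant event, $X_{\tau+h}^\top a_1(h)\le0$ by the drift condition and nonnegativity of $K$, and --- this is the key point, Lemma~\ref{L:ao2} --- $a_0(h)$ is a combination of past values of $X$ (all in $\mathcal B$) with nonnegative weights $1-(\Delta_hK*L)(\tau)$, $(\Delta_hK*L)(0)$ and $d(\Delta_hK*L)$ summing to at most $1$; Cauchy--Schwarz then gives $|a_0(h)|\le1$. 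Writing $\psi(h)=|X_{\tau+h}|$ one gets $\psi(h)^2\le\psi(h)|a_0(h)|+X_{\tau+h}^\top a_1(h)\le\psi(h)$, hence $\psi(h)\le1$, contradicting the definition of $\tau$. It is this convex-combination representation of $X_{\tau+h}$ --- made possible by the monotonicity of $K$ and $L$ in Assumption~\ref{ass:K orthant} together with the convexity of $\mathcal B$ --- and not an It\^o/maximum-principle argument on $\|X\|^2$, that controls the memory. If you want to salvage your plan, this is the lemma you would need to supply.
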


\begin{proof}
    The proof is given in Section~\ref{S:proofinvariance} below.
\end{proof}

\begin{remark}
Theorem~\ref{T:invarianceball} allows the construction of Volterra processes living in the more general domain  $\{x \in \R^d: x^{\top}Qx \leq 1\}$, for some positive definite $d\times d$ matrix $Q$. Indeed, it suffices to construct a weak solution $X$ for the unit ball  and set $Y=Q^{-1/2}X$. 
\end{remark}

\subsection{Proof of Theorem \ref{T:invarianceball}}\label{S:proofinvariance}
\begin{proof}[Proof of Theorem \ref{T:invarianceball}]For any $n\in \mathbb{N}$ consider the coefficients $b^n$ and $\sigma^n$ by \begin{align*}
    b^n(x) &= \begin{cases}
        b(\frac{x}{1-1/n})-\frac{x}{n(1-1/n)}, & \vert x\vert \leq 1-1/n \\ b(\frac{x}{\vert x \vert})-\frac{x}{n\vert x \vert}, & else
    \end{cases} \\   \sigma^n(x) &= \begin{cases}
        \sigma \left (\frac{x}{1-1/n}\right ), & \vert x\vert \leq 1-1/n  \\ 0, & else.
    \end{cases}
\end{align*} By the assumptions for $b$ and $\sigma$, it follows that $b^n$ and $\sigma^n$ are continuous, and they satisfy the linear growth condition \eqref{eq:lineargrowth} uniformly in $n$, with common constant $\widehat{C}$. Denote by $X^n$ a continuous weak solution to \eqref{eq_CPVP} with respect to $(b^n,\sigma^n)$, obtained from \cite[Theorem 3.4]{abi2019affine}. Applying tightness and stability results for stochastic Volterra equations, see for instance \cite[Lemma A.1 and A.2]{abi2019affine}, we have $X^n \Longrightarrow \hat{X}$, where $\hat{X}$ solves \eqref{eq_CPVP} with coefficients $\hat{b}(x)=1_{\mathcal{B}}(x)b(x) + 1_{\mathcal{B}^c}(x)b(x/\vert x \vert) $ and $\hat{\sigma}(x) = 1_{ \mathcal{B}}(x)\sigma(x)$. If we can prove that $X^n \in \mathcal{B}$ almost surely for all $n\in \mathbb{N}$, then $\hat{X}\in \mathcal{B}$ almost surely and since $\hat{b}|_{\mathcal{B}}=b$ and $\hat{\sigma}|_{\mathcal{B}}=\sigma$, the claim follows.

Fix $n\in \mathbb{N}$ and denote $X=X^n$. Note that the coefficients $(b^n,\sigma^n)$ satisfy the stronger conditions \begin{equation}\label{eq: newcoeffcond}
    \sigma^n(x) = 0 \quad \text{and} \quad x^{\top}b^n(y)\leq 0, \quad \vert y \vert,\vert x\vert > 1-1/n \text{ and } \vert x-y \vert < \epsilon,
\end{equation} for some $\epsilon$ depending on $n$ and the constant from the linear growth condition. Indeed let $y$ such that $\vert y \vert > 1-1/n$ and denote by $\hat{y} = y/\vert y \vert  \in \partial \mathcal{B}$, then \begin{align*}
    x^{\top}b^n(y) = y^{\top}b^n(y)+(x-y)^{\top}b^n(y) &= \vert y \vert (\hat{y}^{\top}b(\hat{y})-(1/n) \hat{y}^{\top}\hat{y})+(x-y)^{\top} b^n(y) \\ & \leq \vert y \vert (\hat{y}^{\top}b(\hat{y})-(1/n) \hat{y}^{\top}\hat{y})+\vert x-y \vert \left (\vert b(\hat{y})\vert+1/n \right)\\ &\leq \vert y \vert (\hat{y}^{\top}b(\hat{y})-(1/n) \hat{y}^{\top}\hat{y})+\vert x-y \vert (2C+1) \\ & \leq -\frac{(n-1)}{ n^2} + (2C+1)\vert x-y\vert \\ & \leq 0,
\end{align*} whenever $\vert x-y \vert \leq \frac{n-1}{(2C+1)n^2}=\epsilon$, where we used Cauchy-Schwarz for the second inequality with $C$ denoting the linear growth constant of $b$, and the assumption \eqref{eq:ballcoeff} for the third inequality. \\ \\ Now consider the stopping time $\tau = \inf\{t \geq 0: X_t \notin  \mathcal{B}\}$, we aim to prove that $\mathbb{P}(\tau < \infty )=0$.  Applying similar reasoning to \cite[Theorem 3.6]{abi2019affine}, under Assumption~\ref{ass:K orthant}, for any stopping time $\tau$ and $h>0$ we have
\begin{equation} \label{eq:orth:0}
\begin{aligned}
X_{\tau+h} 
&= \left( 1-(\Delta_h K * L)(\tau)\right)X_{0} + (\Delta_h K* L)(0)X_{\tau}  + (d(\Delta_h K * L)*(X))({\tau}) \\
&\quad + \int_0^\infty  1_{(\tau,\tau+h]}(s)K(\tau+h-s) b^n(X_s)ds\\
&\quad +\left(\int_0^\infty 1_{(\tau,\tau+h]}(s)K(t+h-s) \sigma^n(X_s)dW_s\right)\Bigg 
 |_{t=\tau}  \\
& =: a_0(h) + a_1(h) + a_2(h).
\end{aligned}
\end{equation} Now for $\epsilon' >0$, we define the events $$\Omega^{\epsilon'} = \left \{\tau<\infty, X^{\top}_sb^n(X_u)\leq 0 \text{ and } \sigma^n(X_u)=0 \quad \forall u,s \in [\tau, \tau+\epsilon'[ \right \}.$$ On the event $ \Omega^{\epsilon'}$, for all $h\in (0,\epsilon')$, we have $a_2(h)=0$, and for $\psi(t)= \vert X_{\tau+t} \vert $
\begin{align}\label{eq:Xsquared}
    \psi(h)^2=X_{\tau+h}^{\top}\left (a_0(h)+a_1(h)\right ) \leq \psi(h) \vert a_0(h) \vert + X_{\tau+h}^{\top}a_1(h),  
\end{align} where we simply applied Cauchy-Schwarz for the inequality. An application of Lemma~\ref{L:ao2} below yields that $\vert a_0(h)\vert\leq 1$. Moreover, by the definition of $\Omega^{\epsilon'}$ and the fact that $K$ is nonnegative, it follows that \begin{equation*}
    \int_{\tau}^{\tau+h}K(\tau+h-s)X_{\tau+h}^{\top}b^n(X_s)ds \leq 0.
\end{equation*} 
Therefore we can conclude \begin{equation*}
    \psi(h)^2\le \psi(h) \vert a_0(h) \vert + X_{\tau+h}^{\top}a_1(h) \leq \psi(h),
\end{equation*} which readily yields $\psi(h) \leq 1$, and thus $X_{\tau+h}\in\mathcal{B}$ for all $h\in(0,\epsilon')$ on $\Omega^{\epsilon'}$. On the other hand, by the definition of $\tau$ and continuity of $X$, there exists $h \in (0,\epsilon')$ such that $X_{\tau+h}\notin \mathcal{B}$ on $\Omega^{\epsilon'}$. But this readily shows $\mathbb{P}(\Omega^{\epsilon'})=0$ for all $\epsilon'$. On the other hand, by \eqref{eq: newcoeffcond} and continuity of $X$, it follows that $$\mathbb{P}(\tau<\infty) = \mathbb{P} \left (\bigcup_{\epsilon'\in (0,\epsilon)\cap \mathbb{Q}} \Omega^{\epsilon'}\right)=0.$$ 
\end{proof} The two following lemmas where used in the proof. 

\begin{lemma} Let $K$ satisfy \eqref{ass:K orthant}. Then, 
    \begin{equation} \label{eq:orth:1}
\text{$(\Delta_h K * L)(t)$ is nondecreasing in $t$,}
\end{equation}
as well as \begin{equation} \label{eq:orth:2}
0 \le (\Delta_hK * L)(t) \le (K * L)(t) = 1.
\end{equation}
\end{lemma}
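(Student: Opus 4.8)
The plan is to exploit the resolvent identity $K*L\equiv 1$ from \eqref{res_L} to rewrite the shifted convolution as $1$ minus a windowed integral of fixed width $h$, and then to read off both \eqref{eq:orth:2} and \eqref{eq:orth:1} from the sign and shift–monotonicity properties of $K$ and $L$ granted by Assumption~\ref{ass:K orthant}. Here $\Delta_hK(t)=K(t+h)$ is the forward shift, so $(\Delta_hK*L)(t)=\int_{[0,t]}K(t+h-s)L(ds)$. First I would split $(K*L)(t+h)=\int_{[0,t+h]}K(t+h-s)L(ds)$ across $[0,t+h]=[0,t]\cup(t,t+h]$; since the left-hand side equals $1$ by \eqref{res_L}, this yields the key representation
\begin{equation*}
(\Delta_hK*L)(t)=1-R(t),\qquad R(t):=\int_{(t,t+h]}K(t+h-s)\,L(ds).
\end{equation*}
The bounds \eqref{eq:orth:2} are then immediate: since $K\ge 0$ on $(0,\infty)$ and $L$ is a nonnegative measure, both $(\Delta_hK*L)(t)\ge 0$ and $R(t)\ge 0$, the latter giving $(\Delta_hK*L)(t)=1-R(t)\le 1=(K*L)(t)$.

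It remains to prove the monotonicity \eqref{eq:orth:1}, which is equivalent to showing that $R$ is non-increasing in $t$; this is the main point. Substituting $u=s-t$ normalises the window,
\begin{equation*}
R(t)=\int_{(0,h]}K(h-u)\,L(t+du),
\end{equation*}
where $L(t+\cdot)$ denotes the $L$-mass of the corresponding translates of subsets of $(0,h]$. Because $K$ is non-increasing, the weight $u\mapsto K(h-u)$ is non-decreasing on $(0,h]$, so by the layer-cake formula $K(h-u)=\int_0^\infty \mathbf 1_{\{K(h-u)>y\}}\,dy$ together with Tonelli's theorem,
\begin{equation*}
R(t)=\int_0^\infty L\big((t+c(y),\,t+h]\big)\,dy,\qquad (c(y),h]:=\{u\in(0,h]:K(h-u)>y\},
\end{equation*}
each level set being a right-anchored interval precisely because $u\mapsto K(h-u)$ is non-decreasing (and finiteness is guaranteed since $R(t)\le 1$). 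For each fixed $y$ the integrand is the $L$-mass of an interval of fixed length $h-c(y)$ whose left endpoint $t+c(y)$ increases with $t$, hence it is non-increasing in $t$ by the shift–monotonicity assumption on $L$; integrating in $y$ shows $R$ is non-increasing, so $(\Delta_hK*L)=1-R$ is non-decreasing.

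The single technical obstacle is that Assumption~\ref{ass:K orthant} is phrased for closed intervals $s\mapsto L([s,s+\ell])$, whereas the level sets above are half-open intervals $(t+c(y),t+h]$. I would bridge this by writing $(s,s+\ell]=\bigcup_n[s+\tfrac1n,s+\ell]$ and using continuity of $L$ from below to get $L((s,s+\ell])=\lim_n L([s+\tfrac1n,s+\ell])$. For each $n$ the set $[s+\tfrac1n,s+\ell]$ has fixed length $\ell-\tfrac1n$ and left endpoint $s+\tfrac1n$, so the closed-interval monotonicity applied at that length gives $s\mapsto L([s+\tfrac1n,s+\ell])$ non-increasing; passing to the limit shows $s\mapsto L((s,s+\ell])$ is non-increasing as well, which is exactly the property the layer-cake argument invokes. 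Everything else is routine manipulation of the convolution against a measure.
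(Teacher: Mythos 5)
Your proof is correct. Note that the paper does not actually prove this lemma itself: it defers entirely to the proof of Theorem~3.6 in the cited affine Volterra reference (equations (3.9)--(3.10) there), so your contribution is a self-contained version of that argument rather than a genuinely different route. The two key steps you use --- rewriting $(\Delta_hK*L)(t)=1-\int_{(t,t+h]}K(t+h-s)\,L(ds)$ via the resolvent identity $K*L\equiv1$, and then showing the remainder term is non-increasing in $t$ by combining the monotonicity of $K$ with the shift-monotonicity of $L$ through a layer-cake/Tonelli decomposition into right-anchored level intervals --- are exactly the standard mechanism behind the cited proof. Your bridge from closed intervals $L([s,s+\ell])$ (as in Assumption~\ref{ass:K orthant}) to half-open intervals $L((s,s+\ell])$ via continuity from below is a genuine point that needs addressing and you handle it correctly; the only remaining convention issue (the value $K(0)$ when the kernel is singular at the origin, interacting with possible atoms of $L$ at the right endpoint) is already implicit in the definition of $K*L$ and does not affect the argument.
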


\begin{proof}
    The proof of  \eqref{eq:orth:1} and  \eqref{eq:orth:2} can be found in the proof of \cite[Theorem 3.6]{abi2019affine}, see (3.9) and (3.10) therein. 
\end{proof}

\begin{lemma}\label{L:ao2} Let $K$ satisfy \eqref{ass:K orthant} and $f:[0,t]\to \mathcal B$ be continuous such that $f(t) \in \partial \mathcal{B}$ for some $t$. Then, the quantity
    \begin{align*}
        a_0(h) =\left( 1-(\Delta_h K * L)(t)\right)f(0) + (\Delta_h K* L)(0)f({t})  + (d(\Delta_h K * L)*f)(t), \quad h \geq 0, 
    \end{align*}
    satisfies 
    $$ a_0(h)^\top a_0(h) \leq 1, \quad h \geq 0. $$
\end{lemma}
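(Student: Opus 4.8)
The plan is to recognize $a_0(h)$ as the barycenter of a probability measure supported on the values $\{f(s): s\in[0,t]\}\subseteq\mathcal B$, so that the bound $a_0(h)^\top a_0(h)\le 1$ follows from convexity of the closed unit ball together with $|f|\le 1$. Throughout I fix $h\ge 0$ and write $R(s):=(\Delta_h K * L)(s)$. By the preceding lemma, $R$ is nondecreasing and satisfies $0\le R(s)\le 1$; in particular $R$ is of bounded variation, so its Lebesgue--Stieltjes measure $dR$ is a finite nonnegative measure on $[0,t]$, carrying an atom at the origin of mass $R(0)$ (since $R$ vanishes on $(-\infty,0)$) and total mass $R(t)$ on $[0,t]$.

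First I would make the convolution term explicit. Interpreting $(d(\Delta_h K * L) * f)(t)$ as $\int_{(0,t]} f(t-s)\,dR(s)$ --- that is, with the atom of $dR$ at the origin carried by the separate middle term $R(0)f(t)$ --- I would rewrite
\begin{equation*}
 a_0(h) = \bigl(1 - R(t)\bigr)\,f(0) + R(0)\,f(t) + \int_{(0,t]} f(t-s)\,dR(s).
\end{equation*}
This exhibits $a_0(h)=\int_{[0,t]} f\,d\mu_h$ for the measure $\mu_h$ that places mass $1-R(t)$ at the argument $0$, mass $R(0)$ at the argument $t$, and elsewhere equals the image of $dR|_{(0,t]}$ under the reflection $s\mapsto t-s$.

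Next I would verify that $\mu_h$ is a probability measure. Nonnegativity of the three pieces is immediate: $1-R(t)\ge 0$ because $R(t)\le 1$, then $R(0)\ge 0$, and $dR\ge 0$ because $R$ is nondecreasing. For the total mass, $\int_{(0,t]} dR(s)=R(t)-R(0)$, hence
\begin{equation*}
 \mu_h\bigl([0,t]\bigr) = \bigl(1 - R(t)\bigr) + R(0) + \bigl(R(t) - R(0)\bigr) = 1.
\end{equation*}
Since $f$ takes values in $\mathcal B$ we have $|f(s)|\le 1$ for all $s\in[0,t]$, so the triangle inequality for $\mathbb{R}^d$-valued integrals yields
\begin{equation*}
 |a_0(h)| \le \int_{[0,t]} |f|\,d\mu_h \le \mu_h\bigl([0,t]\bigr) = 1,
\end{equation*}
which is exactly the claim $a_0(h)^\top a_0(h)=|a_0(h)|^2\le 1$.

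I expect the only real obstacle to be the bookkeeping around the atom of $dR$ at the origin: one must confirm that it is counted exactly once (through the $R(0)f(t)$ term) so that the three weights sum to precisely $1$ rather than $1+R(0)$, and check this uniformly in whether the resolvent $L$ itself has an atom at $0$ (e.g.\ $L=\delta_0$ for a constant kernel versus the atomless fractional case). Finally I would remark that the hypothesis $f(t)\in\partial\mathcal B$ is not actually used for the estimate; only $f([0,t])\subseteq\mathcal B$ enters, the boundary condition being relevant solely to the application of the lemma in the proof of Theorem~\ref{T:invarianceball}.
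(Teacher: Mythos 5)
Your proof is correct and rests on exactly the same ingredients as the paper's: nonnegativity and monotonicity of $\Delta_h K * L$ from the preceding lemma, the telescoping identity $(d(\Delta_h K*L)*1)(t)=(\Delta_h K*L)(t)-(\Delta_h K*L)(0)$ showing the three weights sum to one, and $|f|\le 1$ on $\mathcal B$. The only difference is presentational: you conclude via the triangle inequality for the vector-valued integral (convexity of the closed unit ball), whereas the paper expands $a_0(h)^\top a_0(h)$, bounds each cross term by Cauchy--Schwarz, and recognizes the result as the square of the total mass; your remark that the hypothesis $f(t)\in\partial\mathcal B$ is not actually needed for the estimate is also accurate.
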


\begin{proof}
    We first notice that, thanks to the Cauchy-Schwarz inequality, we have
    \begin{align}\label{eq:CSball}
        x^\top y \leq \sqrt{x^\top x}{\sqrt{y^\top y}} \leq 1, \quad x,y \in \mathcal B.  
    \end{align}
    We expand  
    \begin{align*}
        a_0(h)^\top a_0(h) &=   \left( 1-(\Delta_h K * L)(t)\right)^2 f(0)^\top f(0)  + (\Delta_h K* L)(0)^2f({t})^\top f(t)  +  \\
        &\quad + (d(\Delta_h K * L)*f)(t)^\top (d(\Delta_h K * L)*f)(t) \\
        &+  2  \left( 1-(\Delta_h K * L)(t)\right) (\Delta_h K* L)(0) f(0)^\top f(t) \\
        &+ 2  \left( 1-(\Delta_h K * L)(t)\right)f(0)^\top   (d(\Delta_h K * L)*f)(t)  \\
        &+ 2 (\Delta_h K* L)(0) f(t)^\top (d(\Delta_h K * L)*f)(t) \\
       & \leq  \left( 1-(\Delta_h K * L)(t)\right)^2  + (\Delta_h K* L)(0)^2  +  \\
        &\quad + (d(\Delta_h K * L)*1)^2(t) \\
        &+  2  \left( 1-(\Delta_h K * L)(t)\right) (\Delta_h K* L)(0) \\
        &+ 2  \left( 1-(\Delta_h K * L)(t)\right)   (d(\Delta_h K * L)*1)(t)  \\
        &+ 2 (\Delta_h K* L)(0) (d(\Delta_h K * L)*1)(t)\\
        &= \left( \left( 1-(\Delta_h K * L)(t)\right)  + (\Delta_h K* L)(0) +  (d(\Delta_h K * L)*1)(t)  \right)^2 
    \end{align*}
    where for the second inequality, we used \eqref{eq:CSball} combined with \eqref{eq:orth:1} and \eqref{eq:orth:2}. Finally, observing that 
    $$ (d(\Delta_h K * L)*1)(t)  =  (\Delta_h K * L)(t) - (\Delta_h K * L)(0)$$ yields the desired claim. 
\end{proof}

\appendix
\section{Well-posedness for Volterra-type integral equations}
\label{app:inteq}
In this section we study existence and uniqueness of solutions to a certain class of integral equations, including all the equations appearing in Section \ref{sec:Moments} for the characterization of moments. Fix $N,D\in \mathbb{N}$ and $T\geq 0$, and consider the domain $\mathcal{D}_T^{(N)}$ as in \eqref{eq:timedomain}. Denote by $C(\mathcal{D}_T^{(N)},\mathbb{R}^D)$ the space of continuous, vector-valued functions on the compact set $\mathcal{D}_T^{(N)}$. Moreover, consider the families of kernels $\{\mathcal{A}^{n,i}_{1}\}_{1\leq n \leq N;1\leq i\leq 2}$, and $\{\mathcal{A}^{n,m,j}_2\}_{1 \leq n,m \leq N;1\leq j\leq 3}$, such that $$\mathcal{A}^{n,i}_1:[0,T] \rightarrow \mathbb{R}^{D\times D}, \, \mathcal{A}^{n,m,j}_2:[0,T]\times [0,T] \rightarrow \mathbb{R}^{D\times D}, \, 1 \leq n,m \leq N,1\leq i\leq 2,1\leq j\leq 3.$$ We will make the following integrability assumptions for the matrix-norm of the kernels: 
\begin{equation}\label{eq:integrabilityforA}
    | \mathcal{A}^{n,i}_1 | \in L^1([0,T]), \quad  | \mathcal{A}_2^{n,m,j}(t,s) |\leq | \mathcal{G}_1^{n,m}(t) | \cdot | \mathcal{G}_2^{n,m}(s) |, 
\end{equation} 
for some $\mathcal{G}^{n,m}_1,\mathcal{G}^{n,m}_2 \in L^2$([0,T]), for all $1\leq n,m \leq N$, $1\leq i\leq 2$, $1\leq j\leq 3$.  Finally, for all $\mathbf{f} \in C(\mathcal{D}_T^{(N)},\mathbb{R}^D)$ we define 
\begin{equation}\label{def:operatorintegralequation}
\begin{aligned}
    (\Psi\mathbf{f})(t,T_1,\dots,T_N) &=  \sum_{1\leq n \leq N}\int_0^t\mathcal{A}^{n,1}_1(T_n-r)\mathbf{f}(r,r,(T_k)_{k \neq n})dr \\ 
    & \qquad\sum_{1\leq n \leq N}\int_0^t\mathcal{A}^{n,2}_1(T_n-r)\mathbf{f}(r,(T_k)_{k \neq n},r)dr\\
    & \qquad+ \sum_{1\leq n<m \leq N}\int_0^t\mathcal{A}^{n,m,1}_2(T_n-r,T_m-r)\mathbf{f}(r,r,r,(T_k)_{k \neq n,m})dr\\
    & \qquad+ \sum_{1\leq n<m \leq N}\int_0^t\mathcal{A}^{n,m,2}_2(T_n-r,T_m-r)\mathbf{f}(r,r,(T_k)_{k \neq n,m},r)dr\\
    & \qquad+ \sum_{1\leq n<m \leq N}\int_0^t\mathcal{A}^{n,m,3}_2(T_n-r,T_m-r)\mathbf{f}(r,(T_k)_{k \neq n,m},r,r)dr.
\end{aligned}
\end{equation}
\begin{proposition}\label{lemma:generalwellposedness}
Suppose that \eqref{eq:integrabilityforA} holds. Then, for any function $\mathbf{f}_0\in C([0,T]^N, \mathbb{R}^D)$ there exists a unique solution $\mathbf{f} \in C(\mathcal{D}_T^{(N)},\mathbb{R}^D)$ to the integral equation 
\begin{equation}\label{eq:generalintegralequation}
    \mathbf{f}(t,T_1,\dots,T_N) = \mathbf{f}_0(T_1,\dots,T_N) + (\Psi \mathbf{f})(t,T_1,\dots,T_N),\quad (t,T_1,\dots,T_N)\in\mathcal{D}_T^{(N)}.
\end{equation}
\end{proposition}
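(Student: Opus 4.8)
The plan is to recast \eqref{eq:generalintegralequation} as a fixed-point equation $\mathbf{f}=\Phi\mathbf{f}$, where $\Phi\mathbf{f}:=\mathbf{f}_0+\Psi\mathbf{f}$, on the Banach space $C(\mathcal{D}_T^{(N)},\mathbb{R}^D)$, and to apply the Banach fixed-point theorem after endowing this space with an exponentially weighted norm. Concretely, for $\lambda>0$ I would set
\[
\|\mathbf{f}\|_\lambda=\sup_{(t,T_1,\dots,T_N)\in\mathcal{D}_T^{(N)}}e^{-\lambda t}\,|\mathbf{f}(t,T_1,\dots,T_N)|.
\]
Since $e^{-\lambda T}\le e^{-\lambda t}\le 1$ on $[0,T]$, for each fixed $\lambda$ this norm is equivalent to the usual supremum norm, so $(C(\mathcal{D}_T^{(N)},\mathbb{R}^D),\|\cdot\|_\lambda)$ is again complete and its fixed points coincide (as sets) with the continuous solutions of \eqref{eq:generalintegralequation}.

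First I would verify that $\Psi$, and hence $\Phi$, maps $C(\mathcal{D}_T^{(N)},\mathbb{R}^D)$ into itself. All the shifted evaluations occurring in \eqref{def:operatorintegralequation}, such as $\mathbf{f}(r,r,(T_k)_{k\neq n})$ and $\mathbf{f}(r,r,r,(T_k)_{k\neq n,m})$, lie in $\mathcal{D}_T^{(N)}$, because their time argument $r$ satisfies $r\le t\le\min_k T_k$; this makes the integrals meaningful. Their finiteness is immediate from $|\mathcal{A}_1^{n,i}|\in L^1$, from the factorization bound in \eqref{eq:integrabilityforA} combined with the Cauchy--Schwarz inequality and $\mathcal{G}_1^{n,m},\mathcal{G}_2^{n,m}\in L^2$, and from the boundedness of $\mathbf{f}$. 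Continuity of $\Psi\mathbf{f}$ in $(t,T_1,\dots,T_N)$ would follow from dominated convergence together with continuity of translation in $L^1$ (respectively $L^2$), which governs the dependence of the kernels $\mathcal{A}_1^{n,i}(T_n-\cdot)$, $\mathcal{G}_1^{n,m}(T_n-\cdot)$, $\mathcal{G}_2^{n,m}(T_m-\cdot)$ on the shift parameters $T_n,T_m$; I would treat this step as routine.

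The core of the argument is the contraction estimate. For $\mathbf{f},\mathbf{g}\in C(\mathcal{D}_T^{(N)},\mathbb{R}^D)$, each $\mathcal{A}_1$-term of $\Psi\mathbf{f}-\Psi\mathbf{g}$, after multiplication by $e^{-\lambda t}$, is bounded by $\|\mathbf{f}-\mathbf{g}\|_\lambda\int_0^t|\mathcal{A}_1^{n,i}(T_n-r)|e^{-\lambda(t-r)}\,dr$, while each $\mathcal{A}_2$-term is bounded by $\|\mathbf{f}-\mathbf{g}\|_\lambda\int_0^t|\mathcal{G}_1^{n,m}(T_n-r)|\,|\mathcal{G}_2^{n,m}(T_m-r)|e^{-\lambda(t-r)}\,dr$; applying Cauchy--Schwarz to the latter reduces it to the product of $\bigl(\int_0^t|\mathcal{G}_1^{n,m}(T_n-r)|^2 e^{-\lambda(t-r)}\,dr\bigr)^{1/2}$ and the analogous factor built from $\mathcal{G}_2^{n,m}$. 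Thus everything reduces to the key uniform estimate: for any $g\in L^1([0,T])$,
\[
\sup_{\substack{0\le t\le T,\ a\ge 0\\ a+t\le T}}\ \int_0^t g(a+u)\,e^{-\lambda u}\,du\ \xrightarrow{\ \lambda\to\infty\ }\ 0,
\]
applied with $g=|\mathcal{A}_1^{n,i}|$ and with $g=|\mathcal{G}_1^{n,m}|^2$, $g=|\mathcal{G}_2^{n,m}|^2$ (all in $L^1$); here I have substituted $u=t-r$ and written $a=T_n-t\ge0$. I would prove this by splitting at a small threshold $\delta$: the tail $\int_\delta^t$ is bounded by $e^{-\lambda\delta}\|g\|_{L^1}\to0$, whereas the head $\int_0^\delta$ is dominated, uniformly in $a$, by $\sup_{a}\int_a^{a+\delta}|g|$, which is arbitrarily small for small $\delta$ by the absolute continuity of the Lebesgue integral (an interval $[a,a+\delta]$ has measure $\delta$). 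This is the step I expect to be the main obstacle, since it is precisely where the possibly singular, merely integrable nature of the kernels must be absorbed, in contrast to the bounded-kernel case where a crude $t^k/k!$ bound on iterated integrals would already suffice.

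Collecting these bounds, and using that only finitely many kernels and finitely many summation terms occur, I obtain a constant $c=c(N,T,\text{kernels})$ and a function $\rho(\lambda)\to0$ as $\lambda\to\infty$ with $\|\Psi\mathbf{f}-\Psi\mathbf{g}\|_\lambda\le c\,\rho(\lambda)\,\|\mathbf{f}-\mathbf{g}\|_\lambda$. Fixing $\lambda$ large enough that $c\,\rho(\lambda)<1$ renders $\Phi$ a contraction on the complete space $(C(\mathcal{D}_T^{(N)},\mathbb{R}^D),\|\cdot\|_\lambda)$, so the Banach fixed-point theorem provides a unique fixed point $\mathbf{f}$. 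By the equivalence of $\|\cdot\|_\lambda$ with the supremum norm, this $\mathbf{f}$ is the unique continuous solution of \eqref{eq:generalintegralequation}, which completes the proof.
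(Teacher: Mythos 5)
Your proposal is correct and follows essentially the same route as the paper's proof: the exponentially weighted supremum norm $\|\cdot\|_\lambda$, invariance of the map $\mathbf{f}\mapsto\mathbf{f}_0+\Psi\mathbf{f}$ via dominated convergence and continuity of translation in $L^p$, and a contraction estimate obtained by splitting $\int_0^t e^{-\lambda(t-r)}|\mathcal{A}(\cdot-r)|\,dr$ at a small threshold $\delta$ and invoking the absolute continuity of the Lebesgue integral for the head and $e^{-\lambda\delta}$ decay for the tail. The only cosmetic difference is that you apply Cauchy--Schwarz with the exponential weight inside for the $\mathcal{A}_2$-terms, while the paper bounds $|\mathcal{G}_1^{n,m}(T_1-t+s)||\mathcal{G}_2^{n,m}(T_2-t+s)|$ and splits directly; both reduce to the same $L^2$ estimate.
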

\begin{proof} 
For $\lambda \geq 0$ we introduce the norm $\Vert \mathbf{f} \Vert_{\lambda,\infty}= \sup_{(t,T_1,\dots,T_N) \in \mathcal{D}_T^{(N)}}e^{-\lambda t} | \mathbf{f}(t,T_1,\dots,T_N)|$, where $|\cdot |$ denotes the Euclidean norm on $\mathbb{R}^D$. One can check that $\Vert \cdot \Vert_{\lambda,\infty}$ is equivalent to the sup-norm, and $(C(\mathcal{D}_T^{(N)},\mathbb{R}^D),\Vert \cdot \Vert_{\lambda,\infty})$ is a Banach space. Following a standard proof pattern, we wish to prove that the operator 
$$T \mathbf{f} = \mathbf{f}_0(\cdot) + (\Psi \mathbf{f})(\cdot), \quad \mathbf{f}\in C(\mathcal{D}_T^{(N)},\mathbb{R}^D),$$ 
is invariant and contracts. For the invariance, we only need to show that $(t,T_1,\dots,T_N) \mapsto (T\mathbf{f})(t,T_1,\dots,T_N)$ is continuous. By assumption, $\mathbf{f}_0$ is continuous, and for $x=(t,T_1,\dots,T_N)$ and  $x' = (t',T_1',\dots,T_N')$, with $t \leq t'$, we have 
\begin{align*}
    & \left |(\Psi \mathbf{f})(t,T_1,\dots,T_N)-(\Psi \mathbf{f})(t',T'_1,\dots,T'_N)\right |  \leq \left |(\Psi\mathbf{f})(t,T_1,\dots,T_N)-(\Psi \mathbf{f})(t,T'_1,\dots,T'_N)\right |\\ & \quad + \left |(\Psi \mathbf{f})(t',T'_1,\dots,T'_N)-(\Psi \mathbf{f})(t,T'_1,\dots,T'_N)\right |. 
\end{align*} 
The second term converges to zero as $t \to t'$ by dominated convergence and \eqref{eq:integrabilityforA}. For the first term we only analyze the first and third components of $\Psi \mathbf{f}$ as the others behave similarly. Notice that for any $1\leq n,m\leq N$
\begin{align*}
 \int_0^t | & \mathcal{A}^{n,1}_1(T_n-r)\mathbf{f}(r,r,(T_k)_{k \neq n}) - \mathcal{A}^{n,1}_1(T_n'-r)\mathbf{f}(r,r,(T_k')_{k \neq n}) | dr \\ & \leq \int_0^t | \mathcal{A}^n_1(T_n-r) | \cdot | \mathbf{f}(r,r,(T_k)_{k\neq n})-\mathbf{f}(r,r,(T'_k)_{k\neq n}) | dr  \\ & \quad +\int_0^t | \mathcal{A}^n_1(T_n-r)-\mathcal{A}^n_1(T'_n-r) | \cdot | \mathbf{f}(r,r,(T'_k)_{k\neq n})| dr .
\end{align*} 
and
\begin{align*}
 \int_0^t | & \mathcal{A}^{n,m,1}_2(T_n-r,T_m-r)\mathbf{f}(r,r,r,(T_k)_{k \neq n,m}) - \mathcal{A}^{n,m,1}_2(T_n'-r,T_m'-r)\mathbf{f}(r,r,r,(T_k')_{k \neq n,m})| dr \\ & \leq \int_0^t | \mathcal{A}^{n,m,1}_2(T_n-r,T_m-r) | \cdot | \mathbf{f}(r,r,r,(T_k)_{k\neq n,m})-\mathbf{f}(r,r,r,(T'_k)_{k\neq n,m}) | dr \\  &\quad +\int_0^t | \mathcal{A}^{n,m,1}_2(T_n-r,T_m-r)-\mathcal{A}^{n,m}_2(T'_n-r,T'_m-r) | \cdot | \mathbf{f}(r,r,r,(T'_k)_{k\neq n,m}) | dr.
\end{align*}
In the expressions on the right of the two previous inequalities, the first integrals converge to zero as $x \to x'$ by \eqref{eq:integrabilityforA}, dominated convergence and the fact that $\mathbf{f}$ is continuous. For the second integrals, we can take $\Vert \mathbf{f} \Vert_{\infty}$ out of the integral, and then use \eqref{eq:integrabilityforA} and the fact that the translation of $L^p$-functions is continuous, see \cite[Proposition 1.6.13]{tao2011introduction}, to show that both terms converge to zero as $x'\to x$. Applying similar arguments to the other components of $\Psi\mathbf{f}$, we conclude that $T\mathbf{f} \in C(\mathcal{D}_T^{(N)},\mathbb{R}^D)$. \\  
Finally, denoting by $d_{\lambda}$ the metric induced by the norm $\Vert \cdot \Vert_{\lambda,\infty}$, we want to show that $T$ contracts for $\lambda$ large enough, that is 
\begin{equation}\label{eq:Tcontracts}
d_{\lambda}(T\mathbf{f},T\mathbf{g}) \leq q d_{\lambda}(\mathbf{f},\mathbf{g}), \quad \forall \mathbf{f},\mathbf{g} \in C(\mathcal{D}_T^{(N)},\mathbb{R}^D)
\end{equation}
for some $q < 1$ and $\lambda > 0$. By definition  \begin{equation}\label{eq:contractionineq}\begin{split}
& e^{-\lambda t}\left | (T\mathbf{f})(t,T_1,\dots,T_N)-(T\mathbf{g})(t,T_1,\dots,T_N) \right | \\ & \leq \sum_{1\leq n \leq N}\int_0^t e^{-\lambda t}|  \mathcal{A}^{n,1}_1(T_n-r) | | \mathbf{f}(r,r,(T_k)_{k\neq n})-\mathbf{g}(r,r,(T_k)_{k\neq n})| dr\\ & \quad + \sum_{1\leq n \leq N}\int_0^t e^{-\lambda t}|  \mathcal{A}^{n,2}_1(T_n-r) | | \mathbf{f}(r,(T_k)_{k\neq n},r)-\mathbf{g}(r,(T_k)_{k\neq n},r)| dr \\ & \quad + \sum_{1\leq n<m \leq N}\int_0^t e^{-\lambda t}|  \mathcal{A}^{n,m,1}_2(T_n-r,T_m-r) | | \mathbf{f}(r,r,r,(T_k)_{k\neq n,m})-\mathbf{g}(r,r,r,(T_k)_{k\neq n,m})| dr\\ & \quad + \sum_{1\leq n<m \leq N}\int_0^t e^{-\lambda t}|  \mathcal{A}^{n,m,2}_2(T_n-r,T_m-r) | | \mathbf{f}(r,r,(T_k)_{k\neq n,m},r)-\mathbf{g}(r,r,(T_k)_{k\neq n,m},r)| dr\\ & \quad + \sum_{1\leq n<m \leq N}\int_0^t e^{-\lambda t}|  \mathcal{A}^{n,m,3}_2(T_n-r,T_m-r) | | \mathbf{f}(r,(T_k)_{k\neq n,m},r,r)-\mathbf{g}(r,(T_k)_{k\neq n,m},r,r)| dr \\ & \leq d_{\lambda}(\mathbf{f},\mathbf{g})\times \\
&\quad\underbrace{\left ( \sum_{\substack{1\leq n \leq N\\i=1,2}} \int_0^te^{-\lambda(t-r)} | \mathcal{A}^{n,i}_1(T_n-r) | dr + \sum_{\substack{1\leq n<m \leq N\\j=1,2,3}} \int_0^t e^{-\lambda(t-r)} | \mathcal{A}^{n,m,j}_2(T_n-r,T_m-r) | dr\right )}_{C_{\lambda}(t,T_1,\ldots,T_N)}.
\end{split}\end{equation} We claim that for all $1\leq n,m \leq N$, $1\leq i\leq 2$, and $1\leq j\leq 3$,  we have 
\begin{equation}\label{eq:cont_const_tozero}
\sup_{0 \leq t \leq T_1 \leq T}\int_0^te^{-\lambda(t-r)} | \mathcal{A}^{n,i}_1(T_1-r) | dr, \sup_{0 \leq t \leq T_1,T_2 \leq T}\int_0^te^{-\lambda(t-r)} | \mathcal{A}^{n,m,j}_2(T_1-r,T_2-r) | dr \xrightarrow{\lambda \to \infty} 0.
\end{equation}
Indeed, using a change of variable $s = t-r$, for any $\delta > 0$ we can write the first integral as \begin{align*}
\int_0^te^{-\lambda s} | \mathcal{A}^{n,i}_1(T_1-t+s) | ds & \leq \int_0^{\delta}e^{-\lambda s} | \mathcal{A}^{n,i}_1(T_1-t+s) | ds+ \int_{\delta}^{t\vee\delta} e^{-\lambda s}| \mathcal{A}^{n,i}_1(T_1-t+s)| ds \\ & \leq \Vert \mathcal{A}^{n,i}_1 \Vert_{L^1([T_1-t,T_1-t+\delta])} + e^{-\lambda \delta} \Vert \mathcal{A}^{n,i}_{1} \Vert_{L^1([0,T])}.
\end{align*} Since $\mathcal{A}^{n,i}_1\in L^1([0,T])$ by assumption \eqref{eq:integrabilityforA}, for any $\epsilon > 0$, we can choose $\delta$ small enough, such that $\Vert \mathcal{A}^{n,i}_1 \Vert_{L^1([T_1-t,T_1-t+\delta])}\leq \epsilon/2$, uniformly in $(t,T_1)$. Choosing $\lambda>0$ such that $e^{-\lambda \delta} \Vert \mathcal{A}^{n,i}_{1} \Vert_{L^1([0,T])} \leq \epsilon / 2$, we obtain  $$\sup_{0 \leq t \leq T_1 \leq T}\int_0^te^{-\lambda s} | \mathcal{A}^{n,i}_1(T_1-t+s) | ds \leq \epsilon.$$ Similar considerations can be applied for the second term using the assumptions in \eqref{eq:integrabilityforA}. Indeed, for some $\mathcal{G}^{n,m}_1,\mathcal{G}^{n,m}_2\in L^2([0,T])$ we have $$\int_0^te^{-\lambda s} | \mathcal{A}^{n,m,j}_2(T_1-t+s,T_2-t+s) | ds \leq \max_{i\in \{1,2\}}\Vert \mathcal{G}^{n,m}_i \Vert_{L^2([T_i-t,T_i-t+\delta])}^2 + e^{-\lambda \delta} \max_{i\in \{1,2\}}\Vert \mathcal{G}^{n,m}_i \Vert_{L^2([0,T])}^2,$$ and we conclude with the same arguments as before, that we can find $\lambda$ large enough such that  $$ \sup_{0 \leq t \leq T_1,T_2 \leq T}\int_0^te^{-\lambda(t-r)} | \mathcal{A}^{n,m,j}_2(T_1-r,T_2-r) | dr \leq \epsilon.$$ Hence, \eqref{eq:cont_const_tozero} holds and we can choose $\lambda$ large enough, such that  $$\sup_{(t,T_1,\dots,T_N) \in \mathcal{D}_T^{(N)}}C_{\lambda}(t,T_1,\ldots,T_N)<1,$$ with $C_{\lambda}$ as in \eqref{eq:contractionineq}. This implies \eqref{eq:Tcontracts} for some $q<1$ and $\lambda > 0$. Banach fixed point theorem yields the conclusion.
\end{proof}

\begin{corollary}\label{cor:existenceumoments} The function $\mathbf{m}$ defined in Theorem \ref{th:momentformula} is the unique solution in $\mathcal{X}_T^{(N)}$ of \eqref{eq:momentformula} with initial condition $\mathbf{m}(0,T_1,\ldots,T_N)$, and $\mathbf{m}$ is continuous on $\mathcal{D}_T^{(N)}$.
\end{corollary}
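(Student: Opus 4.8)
The plan is to recognize the moment equation \eqref{eq:momentformula} as a particular instance of the abstract Volterra integral equation \eqref{eq:generalintegralequation} and then invoke Proposition \ref{lemma:generalwellposedness}. Concretely, I would set $D=D_N$, identify the component index set with $\mathcal{I}^{(N)}$ through the enumeration $\pi$, and take as (necessarily $t$-independent) inhomogeneity the initial datum $\mathbf{f}_0(T_1,\ldots,T_N)=\mathbf{m}(0,T_1,\ldots,T_N)$, whose components $\prod_{n=1}^{p}(g_0(T_n))_{i_n}$ are continuous on $[0,T]^N$ because $g_0\in C(\mathbb{R}_+,\mathbb{R}^d)$. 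The remaining task is to exhibit matrix kernels $\{\mathcal{A}_1^{n,i}\}$ and $\{\mathcal{A}_2^{n,m,j}\}$ for which the operator $\Psi$ of \eqref{def:operatorintegralequation} coincides with $\mathcal{M}_T^{(N)}$ of \eqref{eq:operatorM} on $\mathcal{X}_T^{(N)}$.

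The heart of the argument is this matching, and it is where the combinatorics are delicate. I would assign each of the five summands of $\mathcal{M}_T^{(N)}$ to one of the five structures of $\Psi$: the drift terms with coefficients $b_j$ and $b_0$ to $\mathcal{A}_1^{n,1}$ and $\mathcal{A}_1^{n,2}$, and the covariation terms with $A_{jk}$, $A_j$, $A_0$ to $\mathcal{A}_2^{n,m,1}$, $\mathcal{A}_2^{n,m,2}$, $\mathcal{A}_2^{n,m,3}$. In each case the scalar kernel entry $e_{i_n}^\top K(T_n-r)b_\bullet$ or $e_{i_n}^\top K(T_n-r)A_\bullet K(T_m-r)^\top e_{i_m}$ is placed in row $\pi(p,w)$ and the column selected by the index operation $w_{-n}$, $w_{-n}^j$, $w_{-n,-m}$, $w_{-n,-m}^j$, $w_{-n,-m}^{j,k}$, and set to zero otherwise. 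The point that makes exactly five structures sufficient is that the \emph{level-lowering} summands (those with $b_0$, $A_0$, $A_j$, which target a component of strictly lower level $p-1$ or $p-2$) are carried by the structures with a diagonal argument placed \emph{at the end}, e.g.\ $(r,(T_k)_{k\neq n},r)$: since any $\mathbf{f}_{\pi(q,\cdot)}\in\mathcal{X}_T^{(N)}$ depends only on $(t,T_1,\ldots,T_q)$, a trailing diagonal slot beyond position $q$ is simply discarded, so such a structure reproduces precisely the ``no new diagonal'' evaluation $(r,(T_m)_{m\neq n})$ of \eqref{eq:operatorM}, while the start-diagonal structures realize the genuine diagonal evaluations $X_{j,r}=g_{j,r}(r)$ of the same-level terms; one then checks that the first $q$ arguments agree on the nose.

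With the correspondence in hand, the integrability hypothesis \eqref{eq:integrabilityforA} is routine: the entries of $\mathcal{A}_1^{n,i}$ are dominated by $C\,|K(\cdot)|\in L^2([0,T])\subseteq L^1([0,T])$ since $K\in L^2_{\mathrm{loc}}$, and the entries of $\mathcal{A}_2^{n,m,j}(u,v)$ are dominated by $C\,|K(u)|\,|K(v)|$, giving the required factorization with $\mathcal{G}_1^{n,m}=\mathcal{G}_2^{n,m}=C^{1/2}|K|\in L^2([0,T])$. Proposition \ref{lemma:generalwellposedness} then produces a unique $\mathbf{f}^\ast\in C(\mathcal{D}_T^{(N)},\mathbb{R}^{D_N})$ solving the equation with this initial datum. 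To conclude, I would note that the Banach fixed point argument of that proposition runs in the $\lambda$-weighted supremum norm, which is complete already on the space of merely \emph{bounded} functions and whose contraction estimate \eqref{eq:Tcontracts} uses only boundedness; hence the fixed point is unique among bounded functions. As $\mathbf{m}\in\mathcal{X}_T^{(N)}$ is, by Theorem \ref{th:momentformula}, a bounded solution of \eqref{eq:momentformula}, it must equal $\mathbf{f}^\ast$, which is therefore continuous and the unique solution in $\mathcal{X}_T^{(N)}$. The main obstacle is precisely the identity $\mathcal{M}_T^{(N)}=\Psi$, that is the end-diagonal identification letting the level-lowering terms be absorbed into the five admissible structures; the integrability check and the passage from uniqueness in $C$ to uniqueness in $\mathcal{X}_T^{(N)}$ are then straightforward.
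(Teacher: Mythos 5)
Your proposal is correct and follows essentially the same route as the paper: you recast $\mathcal{M}_T^{(N)}$ as the abstract operator $\Psi$ of Appendix~\ref{app:inteq} by appending the diagonal argument $r$ at the end in the level-lowering ($b_0$, $A_0$, $A_j$) terms so that the trailing slots are discarded by functions in $\mathcal{X}_T^{(N)}$, verify \eqref{eq:integrabilityforA} from $K\in L^2_{\mathrm{loc}}$, and then obtain continuity and uniqueness by observing that the contraction estimate \eqref{eq:contractionineq} already holds on bounded functions. This matches the paper's proof of Corollary~\ref{cor:existenceumoments} step for step, with your matching of the five addends to the five structures of $\Psi$ spelled out in somewhat more detail than the paper provides.
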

\begin{proof}
We start by observing that $\mathcal{M}_T^{(N)}$ can be defined for bounded functions $f$ on $\mathcal{D}_T^{(N)}$ by inserting $r$ (one or multiple times) as the last arguments of the function $\mathbf{f}$ in the first, third and fourth addends in \eqref{eq:operatorM}. Furthermore, this extension of the operator $\mathcal{M}_T^{(N)}$ restricted to the subspace $C(\mathcal{D}_T^{(N)},\mathbb{R}^D)$ has the structure of the operator $\Psi$ in \eqref{def:operatorintegralequation} with $D=D_N$ (the cardinality of the set $\mathcal{I}^{(N)}$ in \eqref{eq:setI_N}), and the hypothesis \eqref{eq:integrabilityforA} holds. In addition, since $g_0$ is continuous then $\mathbf{m}(0,T_1,\ldots,T_N)$ is continuous as well. By Proposition \ref{lemma:generalwellposedness} there exists a solution $\mathbf{m}'\in C(\mathcal{D}_T^{(N)},\mathbb{R}^D)$ to the equation \eqref{eq:momentformula} with initial condition $\mathbf{m}(0,T_1,\ldots,T_N)$, where instead of $\mathcal{M}_T^{(N)}$ we consider the above mentioned extension of the operator. Furthermore, the proof of \eqref{eq:contractionineq} in Proposition \ref{lemma:generalwellposedness} also holds for bounded functions $\mathbf{f},\mathbf{g}$. Hence, uniqueness also holds over the space of bounded functions on $\mathcal{D}_T^{(N)}$. These observations imply uniqueness of the solutions in $\mathcal{X}_T^{(N)}$ of \eqref{eq:momentformula} with initial condition $\mathbf{m}(0,T_1,\ldots,T_N)$, and that the function $\mathbf{m}$ defined in Theorem \ref{th:momentformula} has to coincide with $\mathbf{m}'$ and in particular it is continuous.
\end{proof}
\begin{corollary}\label{cor:eqncoeffs}
Fix $\beta \in \mathbb{N}_0^d$ and $p\in\mathbb{N}$ such that $|\beta|\leq p$. Suppose that that for each $1\leq q<p$ and any $w\in\{1,\ldots,d\}^{q}$, $C^{(q)}_{\beta}(\cdot,\ldots,\cdot;w)$ is continuous on $\mathcal{D}_T^{(q)}$. Consider the system of equations \eqref{eq:coefficientformulas} in Theorem \ref{thm:momentsarepolynomials} -- seen as a system indexed over the elements $w\in\{1,\ldots,d\}^p$ -- with initial condition $\mathbf{f}_{\beta}^{(p)}(\cdots;w)$, $w\in\{1,\ldots,d\}^p$, as in \eqref{eq:coefficientformulas_initialcond}. Then for each $w\in\{1,\ldots,d\}^p$, $\mathbf{f}_{\beta}^{(p)}(\cdots;w)$ is in $C(\mathcal{D}_T^{(p)},\mathbb{R})$ and \eqref{eq:coefficientformulas} has a unique solution such that $C_{\beta}^{(p)}(\cdot,\ldots,\cdot;w)\in C(\mathcal{D}_T^{(p)},\mathbb{R})$.
\end{corollary}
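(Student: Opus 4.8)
The plan is to read the system \eqref{eq:coefficientformulas}, for fixed $\beta$ and $p$, as an equation for the single $\mathbb{R}^{d^p}$-valued unknown $\mathbf{C} = (C^{(p)}_\beta(\cdot,\ldots,\cdot;w))_{w\in\{1,\ldots,d\}^p}$ on $\mathcal{D}_T^{(p)}$, and to identify it with the abstract equation \eqref{eq:generalintegralequation} so that Proposition \ref{lemma:generalwellposedness} applies with $N=p$ and $D=d^p$. One preliminary remark is in order: the forcing term $\mathbf{f}^{(p)}_\beta$ in \eqref{eq:coefficientformulas} depends on $t$, whereas Proposition \ref{lemma:generalwellposedness} is stated for a $t$-independent datum $\mathbf{f}_0(T_1,\ldots,T_N)$. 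This is harmless, because the contraction estimate in the proof of that proposition involves only the operator $\Psi$ (the forcing cancels in the difference $T\mathbf{f}-T\mathbf{g}$), while the invariance step only requires the forcing to be continuous on $\mathcal{D}_T^{(N)}$; hence that proof carries over verbatim with any continuous, possibly $t$-dependent, forcing.

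First I would establish that $\mathbf{f}^{(p)}_\beta(\cdot,\ldots,\cdot;w)\in C(\mathcal{D}_T^{(p)},\mathbb{R})$. By the induction hypothesis, all the lower-order coefficients $C^{(q)}_\beta(\cdot,\ldots,\cdot;w')$ with $q<p$ appearing in \eqref{eq:coefficientformulas_initialcond} are continuous on $\mathcal{D}_T^{(q)}$. Each summand of $\mathbf{f}^{(p)}_\beta$ is then either the constant $1_{\{\beta=\alpha(w)\}}$, or an integral over $r\in[0,t]$ of one of these continuous functions, evaluated at an admissible argument such as $(r,(T_m)_{m\neq n})$ or $(r,r,(T_l)_{l\neq m,n})$ (admissible since $r\le t\le\min_i T_i$), against one or two shifted copies of $K$. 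Continuity of such integrals in $(t,T_1,\ldots,T_p)$ is exactly what is shown in the invariance step of the proof of Proposition \ref{lemma:generalwellposedness}: one splits an increment into its time part, controlled by dominated convergence and $K\in L^2_{\mathrm{loc}}\subset L^1_{\mathrm{loc}}$, and its part in the $T_i$-variables, controlled by continuity of $L^p$-translations together with the uniform continuity of the $C^{(q)}_\beta$ on the compact domains $\mathcal{D}_T^{(q)}$. Thus $\mathbf{f}^{(p)}_\beta$ is a continuous forcing term.

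Next I would match the two integral terms on the right-hand side of \eqref{eq:coefficientformulas} to the template \eqref{def:operatorintegralequation}. The decisive observation is that in those terms the unknown is evaluated with its first two (respectively three) time-slots collapsed to the integration variable $r$, i.e.\ at $(r,r,(T_m)_{m\neq n})$ and $(r,r,r,(T_l)_{l\neq m,n})$, which are precisely the argument patterns $\mathbf{f}(r,r,(T_k)_{k\neq n})$ and $\mathbf{f}(r,r,r,(T_k)_{k\neq n,m})$ of the first and third summands of $\Psi$; the remaining argument placements in $\Psi$ are taken to vanish. The reindexings $w\mapsto w^{j}_{-n}$ and $w\mapsto w^{j,k}_{-n,-m}$, together with the scalar weights $e_{i_n}^\top K(T_n-r)b_j$ and $e_{i_n}^\top K(T_n-r)A_{jk}K(T_m-r)^\top e_{i_m}$, are absorbed into the $d^p\times d^p$ matrix kernels: in the $(w,w')$ entry, $\mathcal{A}^{n,1}_1(T_n-r)$ carries $\sum_{j:\,w^{j}_{-n}=w'}e_{i_n}^\top K(T_n-r)b_j$ and $\mathcal{A}^{n,m,1}_2(T_n-r,T_m-r)$ carries $\sum_{j,k:\,w^{j,k}_{-n,-m}=w'}e_{i_n}^\top K(T_n-r)A_{jk}K(T_m-r)^\top e_{i_m}$. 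The integrability hypothesis \eqref{eq:integrabilityforA} is then immediate: each entry of $\mathcal{A}^{n,1}_1(s)$ is bounded, up to a constant depending on the $b_j$, by $|K(s)|\in L^1([0,T])$, and each entry of $\mathcal{A}^{n,m,1}_2(s_1,s_2)$ is bounded by $C|K(s_1)|\,|K(s_2)|$, which factorizes as required with $\mathcal{G}^{n,m}_1=C|K|$ and $\mathcal{G}^{n,m}_2=|K|$ both in $L^2([0,T])$. Proposition \ref{lemma:generalwellposedness} (in the extended form noted above) then delivers a unique $\mathbf{C}\in C(\mathcal{D}_T^{(p)},\mathbb{R}^{d^p})$, i.e.\ the asserted continuity, existence, and uniqueness of the $C^{(p)}_\beta(\cdot,\ldots,\cdot;w)$.

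The step I expect to be the main obstacle is the bookkeeping of this vectorization: one must check that the collapsed-argument patterns in \eqref{eq:coefficientformulas} genuinely coincide with the admissible patterns in \eqref{def:operatorintegralequation}, and that the index maps $w\mapsto w^{j}_{-n}$ and $w\mapsto w^{j,k}_{-n,-m}$ yield well-defined, $r$-independent supports for the matrix kernels, so that the scalar weights can be repackaged into genuine $\mathbb{R}^{d^p\times d^p}$-valued kernels obeying \eqref{eq:integrabilityforA}. Everything else is a direct appeal to Proposition \ref{lemma:generalwellposedness}.
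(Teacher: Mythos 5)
Your proposal is correct and follows essentially the same route as the paper's proof: establish continuity of the forcing term $\mathbf{f}^{(p)}_{\beta}$ by the same arguments used in the invariance step of the proof of Proposition \ref{lemma:generalwellposedness}, then vectorize the system over $w\in\{1,\ldots,d\}^p$ with $D=d^p$ and $N=p$ and invoke that proposition. Your extra observation that the forcing term here depends on $t$ while Proposition \ref{lemma:generalwellposedness} is stated for a $t$-independent datum --- and your explanation of why the contraction and invariance arguments nonetheless carry over --- addresses a point the paper's two-line proof passes over silently.
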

\begin{proof} The continuity of $\mathbf{f}_{\beta}^{(p)}(\cdots;w)$ can be shown in the same way that we showed in the proof of Proposition \ref{lemma:generalwellposedness} that $T\mathbf{f}$ was continuous for $\mathbf{f}$ continuous. By taking $D=d^p$, i.e. the cardinality of $\{1,\ldots,d\}^p$, and $N=p$, equation \eqref{eq:coefficientformulas} has the structure of the equation in Proposition \ref{lemma:generalwellposedness} and hence the claim follows.
\end{proof}

\bibliographystyle{abbrv}
\bibliography{bibl}

\begin{thebibliography}{10}

\bibitem{abi2022laplace}
E.~Abi~Jaber.
\newblock The {L}aplace transform of the integrated {V}olterra {W}ishart
  process.
\newblock {\em Mathematical Finance}, 32(1):309--348, 2022.

\bibitem{abi2021weak}
E.~Abi~Jaber, C.~Cuchiero, M.~Larsson, and S.~Pulido.
\newblock A weak solution theory for stochastic {V}olterra equations of
  convolution type.
\newblock {\em The Annals of Applied Probability}, 31(6):2924--2952, 2021.

\bibitem{abi2019markovian}
E.~Abi~Jaber and O.~El~Euch.
\newblock Markovian structure of the {V}olterra {H}eston model.
\newblock {\em Statistics \& Probability Letters}, 149:63--72, 2019.

\bibitem{abi2019multifactor}
E.~Abi~Jaber and O.~El~Euch.
\newblock Multifactor approximation of rough volatility models.
\newblock {\em SIAM journal on financial mathematics}, 10(2):309--349, 2019.

\bibitem{abijaber2022joint}
E.~Abi~Jaber, C.~Illand, and S.~Li.
\newblock Joint {SPX-VIX} calibration with {G}aussian polynomial volatility
  models: deep pricing with quantization hints.
\newblock {\em arXiv preprint arXiv:2212.08297}, 2022.

\bibitem{abi2019affine}
E.~Abi~Jaber, M.~Larsson, and S.~Pulido.
\newblock Affine {V}olterra processes.
\newblock {\em The Annals of Applied Probability}, 29(5):3155--3200, 2019.

\bibitem{abi2024volatility}
E.~Abi~Jaber and S.~Li.
\newblock Volatility models in practice: Rough, path-dependent or markovian?
\newblock {\em Path-Dependent or Markovian}, 2024.

\bibitem{ackerer2018jacobi}
D.~Ackerer, D.~Filipovi{\'c}, and S.~Pulido.
\newblock The {J}acobi stochastic volatility model.
\newblock {\em Finance and Stochastics}, 22:667--700, 2018.

\bibitem{ahdida2013mean}
A.~Ahdida and A.~Alfonsi.
\newblock A mean-reverting {SDE} on correlation matrices.
\newblock {\em Stochastic Processes and their Applications}, 123(4):1472--1520,
  2013.

\bibitem{alfonsi2023nonnegativity}
A.~Alfonsi.
\newblock Nonnegativity preserving convolution kernels. {A}pplication to
  {S}tochastic {V}olterra {E}quations in closed convex domains and their
  approximation.
\newblock {\em arXiv preprint arXiv:2302.07758}, 2023.

\bibitem{barndorff2011ambit}
O.~E. Barndorff-Nielsen, F.~E. Benth, and A.~E. Veraart.
\newblock Ambit processes and stochastic partial differential equations.
\newblock {\em Advanced mathematical methods for finance}, pages 35--74, 2011.

\bibitem{BBV:13}
O.~E. Barndorff-Nielsen, F.~E. Benth, and A.~E.~D. Veraart.
\newblock {Modelling energy spot prices by volatility modulated Lévy-driven
  Volterra processes}.
\newblock {\em Bernoulli}, 19(3):803 -- 845, 2013.

\bibitem{BBV:14}
O.~E. Barndorff-Nielsen, F.~E. Benth, and A.~E.~D. Veraart.
\newblock {Modelling electricity futures by ambit fields}.
\newblock {\em Advances in Applied Probability}, 46(3):719 -- 745, 2014.

\bibitem{barndorff2007ambit}
O.~E. Barndorff-Nielsen and J.~Schmiegel.
\newblock Ambit processes; with applications to turbulence and tumour growth.
\newblock In {\em Stochastic Analysis and Applications: The Abel Symposium
  2005}, pages 93--124. Springer, 2007.

\bibitem{barndorff2008time}
O.~E. Barndorff-Nielsen and J.~Schmiegel.
\newblock Time change, volatility, and turbulence.
\newblock In {\em Mathematical Control Theory and Finance}, pages 29--53.
  Springer, 2008.

\bibitem{barndorff2009brownian}
O.~E. Barndorff-Nielsen and J.~Schmiegel.
\newblock Brownian semistationary processes and volatility/intermittency.
\newblock {\em Advanced financial modelling}, 8:1--26, 2009.

\bibitem{benth2020abstract}
F.~E. Benth, N.~Detering, and P.~Kr{\"u}hner.
\newblock Abstract polynomial processes.
\newblock {\em arXiv preprint arXiv:2010.02483}, 2020.

\bibitem{benth2021independent}
F.~E. Benth, N.~Detering, and P.~Kr{\"u}hner.
\newblock Independent increment processes: a multilinearity preserving
  property.
\newblock {\em Stochastics}, 93(6):803--832, 2021.

\bibitem{BK:14}
F.~E. Benth and P.~Kr{\"u}hner.
\newblock Representation of infinite-dimensional forward price models in
  commodity markets.
\newblock {\em Communications in Mathematics and Statistics}, 2(1):47--106,
  2014.

\bibitem{BK:15}
F.~E. Benth and P.~Kr\"uhner.
\newblock Derivatives pricing in energy markets: an infinite-dimensional
  approach.
\newblock {\em SIAM Journal on Financial Mathematics}, 6(1):825--869, 2015.

\bibitem{berger1980volterra}
M.~A. Berger and V.~J. Mizel.
\newblock Volterra equations with {I}t{\^o} integrals—{I}.
\newblock {\em The Journal of Integral Equations}, pages 187--245, 1980.

\bibitem{biagini2016polynomial}
F.~Biagini and Y.~Zhang.
\newblock Polynomial diffusion models for life insurance liabilities.
\newblock {\em Insurance: Mathematics and Economics}, 71:114--129, 2016.

\bibitem{BCKW:16}
J.~Blath, A.~G. Casanova, N.~Kurt, and M.~Wilke-Berenguer.
\newblock {A new coalescent for seed-bank models}.
\newblock {\em The Annals of Applied Probability}, 26(2):857 -- 891, 2016.

\bibitem{bondi2024affine}
A.~Bondi, G.~Livieri, and S.~Pulido.
\newblock Affine {V}olterra processes with jumps.
\newblock {\em Stochastic Processes and their Applications}, 168:104264, 2024.

\bibitem{cont2001empirical}
R.~Cont.
\newblock Empirical properties of asset returns: stylized facts and statistical
  issues.
\newblock {\em Quantitative finance}, 1(2):223, 2001.

\bibitem{cuchiero2019polynomial}
C.~Cuchiero.
\newblock Polynomial processes in stochastic portfolio theory.
\newblock {\em Stochastic processes and their applications}, 129(5):1829--1872,
  2019.

\bibitem{cuchiero2021measure}
C.~Cuchiero, F.~Guida, L.~di~Persio, and S.~Svaluto-Ferro.
\newblock Measure-valued affine and polynomial diffusions.
\newblock {\em Preprint ArXiv:2112.15129}, 2021.

\bibitem{cuchiero2012polynomial}
C.~Cuchiero, M.~Keller-Ressel, and J.~Teichmann.
\newblock Polynomial processes and their applications to mathematical finance.
\newblock {\em Finance and Stochastics}, 16:711--740, 2012.

\bibitem{CLS:18}
C.~Cuchiero, M.~Larsson, and S.~Svaluto-Ferro.
\newblock {Probability measure-valued polynomial diffusions}.
\newblock {\em Electronic Journal of Probability}, 24:1 -- 32, 2019.

\bibitem{cuchiero2021infinite}
C.~Cuchiero and S.~Svaluto-Ferro.
\newblock Infinite-dimensional polynomial processes.
\newblock {\em Finance and Stochastics}, 25(2):383--426, 2021.

\bibitem{cuchiero2023signature}
C.~Cuchiero, S.~Svaluto-Ferro, and J.~Teichmann.
\newblock Signature {SDE}s from an affine and polynomial perspective.
\newblock {\em Preprint arXiv:2302.01362}, 2023.

\bibitem{cuchiero2020generalized}
C.~Cuchiero and J.~Teichmann.
\newblock Generalized {F}eller processes and {M}arkovian lifts of stochastic
  {V}olterra processes: the affine case.
\newblock {\em Journal of evolution equations}, 20(4):1301--1348, 2020.

\bibitem{DZ:14}
G.~Da~Prato and J.~Zabczyk.
\newblock {\em Stochastic equations in infinite dimensions}.
\newblock Cambridge university press, 2014.

\bibitem{delemotte2023yet}
J.~Delemotte, S.~D. Marco, and F.~Segonne.
\newblock Yet another analysis of the sp500 at-the-money skew: Crossover of
  different power-law behaviours.
\newblock {\em Available at SSRN 4428407}, 2023.

\bibitem{DFS:03}
D.~Duffie, D.~Filipovi\'c, and W.~Schachermayer.
\newblock Affine processes and applications in finance.
\newblock {\em Ann. Appl. Probab.}, 13(3):984--1053, 2003.

\bibitem{ER:19}
O.~El~Euch and M.~Rosenbaum.
\newblock The characteristic function of rough {H}eston models.
\newblock {\em Mathematical Finance}, 29(1):3--38, 2019.

\bibitem{etheridge2000introduction}
A.~Etheridge.
\newblock {\em An introduction to superprocesses}.
\newblock Number~20. American Mathematical Soc., 2000.

\bibitem{filipovic2001consistency}
D.~Filipovic.
\newblock {\em Consistency problems for Heath-Jarrow-Morton interest rate
  models}.
\newblock Springer Science \& Business Media, 2001.

\bibitem{filipovic2016quadratic}
D.~Filipovi{\'c}, E.~Gourier, and L.~Mancini.
\newblock Quadratic variance swap models.
\newblock {\em Journal of Financial Economics}, 119(1):44--68, 2016.

\bibitem{filipovic2016polynomial}
D.~Filipovi{\'c} and M.~Larsson.
\newblock Polynomial diffusions and applications in finance.
\newblock {\em Finance and Stochastics}, 20(4):931--972, 2016.

\bibitem{filipovic2020markov}
D.~Filipovi{\'c}, M.~Larsson, and S.~Pulido.
\newblock Markov cubature rules for polynomial processes.
\newblock {\em Stochastic Processes and their Applications}, 130(4):1947--1971,
  2020.

\bibitem{GJR:22}
J.~Gatheral, T.~Jaisson, and M.~Rosenbaum.
\newblock Volatility is rough.
\newblock In {\em Commodities}, pages 659--690. Chapman and Hall/CRC, 2022.

\bibitem{GLS:90}
G.~Gripenberg, S.-O. Londen, and O.~Staffans.
\newblock {\em Volterra integral and functional equations}, volume~34 of {\em
  Encyclopedia of Mathematics and its Applications}.
\newblock Cambridge University Press, Cambridge, 1990.

\bibitem{guyon2022does}
J.~Guyon and M.~El~Amrani.
\newblock Does the term-structure of equity at-the-money skew really follow a
  power law?
\newblock {\em Available at SSRN 4174538}, 2022.

\bibitem{GL:23}
J.~Guyon and J.~Lekeufack.
\newblock Volatility is (mostly) path-dependent.
\newblock {\em Quantitative Finance}, 23(9):1221--1258, 2023.

\bibitem{hamaguchi2023weak}
Y.~Hamaguchi.
\newblock Weak well-posedness of stochastic {V}olterra equations with
  completely monotone kernels and non-degenerate noise.
\newblock {\em arXiv preprint arXiv:2310.16030}, 2023.

\bibitem{jacquier2019deep}
A.~J. Jacquier and M.~Oumgari.
\newblock Deep {PPDE}s for rough local stochastic volatility.
\newblock {\em Available at SSRN 3400035}, 2019.

\bibitem{kimura1964diffusion}
M.~Kimura.
\newblock Diffusion models in population genetics.
\newblock {\em Journal of Applied Probability}, 1(2):177--232, 1964.

\bibitem{LARSSON2017901}
M.~Larsson and S.~Pulido.
\newblock Polynomial diffusions on compact quadric sets.
\newblock {\em Stochastic Processes and their Applications}, 127(3):901--926,
  2017.

\bibitem{li2023measure}
Z.~Li.
\newblock {\em Measure-Valued Branching Markov Processes}.
\newblock Springer, 2022.

\bibitem{N:94}
I.~Norros.
\newblock A storage model with self-similar input.
\newblock {\em Queueing systems}, 16:387--396, 1994.

\bibitem{parent2022rough}
L.~Parent.
\newblock The rough path-dependent volatility model.
\newblock {\em Available at SSRN 4270481}, 2022.

\bibitem{promel2023stochastic}
D.~J. Pr{\"o}mel and D.~Scheffels.
\newblock Stochastic {V}olterra equations with {H}{\"o}lder diffusion
  coefficients.
\newblock {\em Stochastic Processes and their Applications}, 161:291--315,
  2023.

\bibitem{tao2011introduction}
T.~Tao.
\newblock {\em An introduction to measure theory}, volume 126.
\newblock American Mathematical Soc., 2011.

\bibitem{viens2019martingale}
F.~Viens and J.~Zhang.
\newblock A martingale approach for fractional {B}rownian motions and related
  path dependent {PDE}s.
\newblock {\em The Annals of Applied Probability}, 29(6):3489--3540, 2019.

\bibitem{wang2008existence}
Z.~Wang.
\newblock Existence and uniqueness of solutions to stochastic {V}olterra
  equations with singular kernels and non-{L}ipschitz coefficients.
\newblock {\em Statistics \& probability letters}, 78(9):1062--1071, 2008.

\bibitem{Willi}
W.~Willinger, M.~S. Taqqu, W.~E. Leland, and D.~V. Wilson.
\newblock Self-similarity in high-speed packet traffic: analysis and modeling
  of ethernet traffic measurements.
\newblock {\em Statistical science}, pages 67--85, 1995.

\end{thebibliography}
\end{document}